\definecolor{shadecolor}{gray}{0.9}
\newcommand{\abs}[1]{\ensuremath{\left\vert#1\right\vert}}
\newcommand{\nrm}[1]{\left\lVert#1\right\rVert}
\DeclarePairedDelimiter\SET{\{}{\}}
\renewcommand{\set}{\SET}
\renewcommand{\Re}{\mathrm{Re}}
\renewcommand{\Im}{\mathrm{Im}}
\newcommand{\m}{\cdot}
\newcommand{\C}{\mathbb{C}}
\newcommand{\R}{\mathbb{R}}
\newcommand{\D}{\mathbb{D}}
\newcommand{\Z}{\mathbb{Z}}
\newcommand{\N}{\mathbb{N}}
\setlist[enumerate]{leftmargin=0.7cm, label=\alph*)}
\theoremstyle{plain}
\newtheorem{theorem}{Theorem}[section]
\newtheorem{corollary}[theorem]{Corollary}
\newtheorem{lemma}[theorem]{Lemma}
\newtheorem{proposition}[theorem]{Proposition}
\theoremstyle{definition}
\newtheorem{definition}[theorem]{Definition}
\theoremstyle{remark}
\newtheorem{remark}[theorem]{Remark}
\numberwithin{equation}{section}
\title[Boundedness of the Cherednik kernel and its limit transition from type BC to type A]
{Boundedness of the Cherednik kernel and its limit transition from type BC to type A}
\author{Dominik Brennecken} 
\address{Institut f\"ur Mathematik, Universit\"at Paderborn, Warburger Str. 100, D-33098 Paderborn, Germany}
\email{bdominik@math.upb.de}
\subjclass[2000]{Primary 33C67; Secondary 33C52}
\keywords{Root systems, special functions, Dunkl theory, functional analysis}
\begin{document}
\date{\today}

\begin{abstract}
We introduce a Cherednik kernel and a hypergeometric function for integral root systems and prove their relation to spherical functions associated with Riemannian symmetric spaces of reductive Lie groups. Furthermore, we characterize the spectral parameters for which the Cherednik kernel is a bounded function. In the case of a crystallographic root system, this characterization was proven by Narayanan, Pasquale and Pusti for the hypergeometric function. This result generalizes the Helgason-Johnson theorem from 1969, which characterizes the bounded spherical functions of a Riemannian symmetric space. The characterization for the Cherednik kernel is based on recurrence relations for the associated Cherednik operators under the dual affine Weyl group going back to Sahi. These recurrence relations are also used to prove a limit transition between the Cherednik kernel of type $A$ and of type $B$, which generalizes an already known result for the associated hypergeometric functions by Rösler, Koornwinder, and Voit.
\end{abstract}

\maketitle
%%%%%%%%%%%%%%%%%%%%%%%%%%
\section*{Introduction}
In their original work on trigonometric Dunkl theory \cite{HO87, H87, O95, O00}, Heckman and Opdam introduced a hypergeometric function for crystallographic root systems with dependence on a continuous parameter $k$. Their theory generalizes radial analysis on Riemannian symmetric spaces of non-compact type $G/K$. Here $G$ is a non-compact semisimple and connected Lie group with finite center and a maximal compact subgroup $K$. Let $G=KAN$ and $\mathfrak{g}=\mathfrak{k}\oplus \mathfrak{a}\oplus \mathfrak{n}$ be associated Iwasawa decompositions on the Lie group and Lie algebra level with restricted roots $\Sigma\subseteq \mathfrak{a}$ and positive roots $\Sigma_+$. Consider the root system $R=2\Sigma$ and define $k_{2\alpha}$ as half of the dimension of the root space of $\mathfrak{g}$ associated with the root $\alpha$. For this specific $k$, the hypergeometric function $(F_k(R;\lambda,x))_{\lambda \in \mathfrak{a}_\C}$ of Heckman and Opdam associated with $(R,k)$ parameterizes the spherical functions $(\varphi_\lambda)_{\lambda \in \mathfrak{a}_\C}$ of $G/K$. More precisely, it holds that
\begin{equation}\label{SemisimpleHypGeoSpherical}
F_k(R;\lambda,X)=\varphi_\lambda(x)=\int_{K} e^{\braket{\lambda-\rho(k),H^G(xh)}}\;\mathrm{d} h, \quad X\in \mathfrak{a},
\end{equation}
where $\braket{\m,\m}$ is a certain inner product, $x$ is the image of $X$ under the exponential map of $G$, $H^G:G \to \mathfrak{a}\cong A$ is the Iwasawa projection and $\rho(k)=\frac{1}{2}\sum_{\alpha \in R_+} k_\alpha \alpha$. \\
We will begin the paper with a brief note on the fact that trigonometric Dunkl theory extends naturally to the case of integral root systems, i.e. the case where the root systems do not span $\mathfrak{a}$. This was already observed in \cite{BR23} for the root system $\mathrm{A}_{n-1}$. It poses a very natural extension and corresponds to passing from a semisimple Lie group to a reductive Lie group (of the Harish-Chandra class). It turns out that the corresponding extension of the hypergeometric function is in line with the extension from semisimple to reductive Lie groups, i.e. the relation \eqref{SemisimpleHypGeoSpherical} holds for reductive Lie groups $G$ and integral root systems $R$ with certain multiplicities $k$. \\
After this, we will continue with a generalization of the Helgason-Johnson theorem. In \cite{HJ69}, Helgason and Johnson characterized the parameters $\lambda$ for which the spherical function $\varphi_\lambda$ is bounded, namely
\begin{equation}\label{HelgasonJohnson}
\varphi_\lambda \text{ is bounded if and only if } \lambda \in C(\rho(k))+i\mathfrak{a},
\end{equation}
where $C(\rho(k))\subseteq \mathfrak{a}$ denotes the convex hull of the Weyl group orbit of $\rho(k)$. \\ 
This theorem was naturally extended in \cite{NPP14} to the Heckman-Opdam hypergeometric function by Narayanan, Pasquale, and Pusti for arbitrary non-negative parameter $k$. In this paper, we will prove the analogous result for the Cherednik kernel $G_k(R_+;\m,\m)$, which is the non-symmetric analog of the hypergeometric function with $F_k(R;\lambda,x)=\frac{1}{\#W}\sum_{w \in W}G_k(R_+;\lambda,wx)$ and Weyl group $W=W(R)$. Here it is easy to see that the boundedness of the Cherednik kernel implies the boundedness of the hypergeometric function and hence $\lambda \in C(\rho(k))+i\mathfrak{a}$. Boundedness of $F_k(R;\lambda,\m)$ for $\lambda \in C(\rho(k))+i\mathfrak{a}$ was proven in \cite{NPP14} by the inequality
\begin{equation}\label{BoundArgument}
\sup_{\lambda \in C(\rho(k))+i\mathfrak{a}} \abs{F_k(R;\lambda,x)} \le \sup_{\mu \in W.\rho(k)}F_k(R;\mu,x)=F_k(R;-\rho(k),x),
\end{equation}
where $F_k(R;-\rho(k),\m )\equiv 1$. The inequality \eqref{BoundArgument} is based on a maximum modulus principle and holds for the Cherednik kernel as well. However, since $G_k$ is not invariant under $W$, we do not know if the last equality in \eqref{BoundArgument} is true for the Cherednik kernel. This leads to the following approach in this paper, we prove recurrence relations for the Cherednik kernel in the parameter $\lambda$ under the dual affine Weyl group in the spirit of Sahi who established this for the non-symmetric Heckman-Opdam polynomials in \cite{S00a,S00b}. This will lead to new estimates between $G_k(R_+;\lambda,\m)$ and $G_k(R_+;w\lambda,\m)$ for $w \in W$ and in particular to the boundedness of $G_k(R_+;\lambda,\m)$ if $\lambda \in C(\rho(k))+i\mathfrak{a}$. \\
Finally, we will use the same recurrence relations of Sahi \cite{S00a,S00b} for the non-symmetric Heckman-Opdam polynomials to deduce a limit transition between the Cherednik kernels associated with the root systems
\begin{align*}
\mathrm{A}_{n-1}&=\set{\pm (e_j-e_i) \mid 1\le i<j\le n}, \\
\mathrm{BC}_n&=\set{\pm e_i,\pm 2e_i \mid 1\le i \le n} \cup \set{\pm (e_i\pm e_j) \mid 1\le i<j \le n},
\end{align*}
where $e_1,\ldots,e_n$ is the standard orthonormal basis of $\R^n$. More precisely, we will prove the following. If $k_1,k_2,k_3$ are the multiplicity values on $\pm e_i,\pm 2e_i,\pm (e_i\pm e_j)$, respectively, then
$$G_{(k_1,k_2,k_3)}^{\mathrm{BC}}(\lambda-\rho^{BC}(k_1,k_2,k_3),x) \xrightarrow[\substack{k_1+k_2 \to \infty \\ k_1/k_2 \to \infty}]{} G_{k_3}^{\mathrm{A}}(\lambda-\rho^{A}(k_3),\log(\cosh(x/2)^2)))$$
pointwise in $x \in \R^n$ and locally uniform in $\lambda\in\C^n$. This generalizes the same limit transition for the hypergeometric function proven by Rösler, Koornwinder, and Voit in \cite{RKV13}. We proceed as in \cite{RKV13}: first, we prove the limit for the Heckman-Opdam polynomials and then we extend the limit with Montel's and Carlson's theorem to the Cherednik kernels. While the second step can be verified in exactly the same way as in \cite{RKV13} for the hypergeometric function, the first step will be proven via the recurrence formulas of Sahi from \cite{S00a,S00b}. The approach via the eigenvalue equation for the symmetric Heckman-Opdam polynomials as in \cite{RKV13} does not seem to be an expedient solution in the non-symmetric case. The question about the non-symmetric analogue of the results in \cite{RKV13} for the Cherednik kernel proven here was initiated by Siddhartha Sahi in an oral communication with Margit Rösler. 

The organization of the paper is the following. In Section 1 we are introducing the notion of a Cherednik kernel and a hypergeometric function for integral root systems. These special functions will be connected with spherical functions of Riemannian symmetric spaces associated with reductive Lie groups in Section 2. The recurrence relations of Sahi for Cherednik operators and the consequences for the Cherednik kernel are contained in Section 3, while in Section 4 we will characterize the spectral parameters for which the Cherednik kernel is bounded. The limit transition between non-symmetric Heckman-Opdam polynomials for the root systems $\mathrm{A}_{n-1}$ and $\mathrm{BC}_n$ are proven in Section 5. Finally, in Section 6 we will extend the limit transition from the polynomial case to the Cherednik kernels.

\section*{Funding}
The author received funding from the Deutsche Forschungsgemeinschaft German Research Foundation (DFG), via RO 1264/4-1 and SFB-TRR 358/1 2023-491392403 (CRC Integral Structures in Geometry and Representation Theory).

\section{Trigonometric Dunkl theory for integral root systems}\label{IntegralRoots}
Let $R$ be an integral root system inside the Euclidean space $(\mathfrak{a},\braket{\cdot,\cdot})$ with norm $\abs{x}=\sqrt{\braket{x,x}}$, i.e. a finite subset with $0\notin R$ and
\begin{enumerate}[itemsep=5pt, topsep=5pt]
\item[(i)] $\braket{\alpha^\vee,\beta} \in \Z$ for all $\alpha,\beta \in R$ with the coroot $\alpha^\vee = \frac{2\alpha}{\braket{\alpha,\alpha}}$.
\item[(ii)] $s_\alpha R = R$ for all $\alpha \in R$ with the reflection $s_\alpha x = x-\braket{\alpha^\vee,x}$ in $\alpha^\perp \subseteq \mathfrak{a}$.
\end{enumerate} 
The integral root system $R$ is called crystallographic if $R$ spans $\mathfrak{a}$. 
The associated Weyl group will be denoted by $W=W(R)=\braket{s_\alpha,\alpha \in R}_{\text{group}}$.
We fix a system of positive roots $R_+ \subset R$ and a $W$-invariant function $k:R \to \C, \, \alpha \mapsto k_\alpha$ called a multiplicity function. The positive Weyl chamber is denoted by
$$\mathfrak{a}_+=\set{x \in \mathfrak{a} \mid \braket{\alpha,x}> 0 \text{ for all } \alpha \in R_+}.$$
Furthermore, we put
$$\mathfrak{a}=\mathfrak{s}\oplus \mathfrak{c} \quad \text{ with }\quad \mathfrak{s}\coloneqq \mathrm{span}_\R R \quad \text{ and } \quad \mathfrak{c}\coloneqq \mathfrak{s}^\perp.$$
Then $R$ is a crystallographic root system inside $\mathfrak{s}$ and $W$ acts trivially on $\mathfrak{c}$.
Let $R=\bigsqcup_{i=1}^m R^i$ be the decomposition of $R$ into non-trivial irreducible root systems $R^i$, $R_+^i = R_+ \cap R^i$ the subset of positive roots, $W_i=W(R_i)$ the associated Weyl groups such that $W=\prod_{i=1}^n W_i$ and $\mathfrak{s}=\oplus_{i=1}^m \mathfrak{s}_i$ the corresponding orthogonal decomposition of $\mathfrak{s}$ with $R^i\subseteq \mathfrak{s}_i$. We denote by $\pi_{\mathfrak{s}}$, $\pi_{\mathfrak{s}_i}$ and $\pi_{\mathfrak{c}}$ the orthogonal projections onto $\mathfrak{s},$ $\mathfrak{s}_i$ and $\mathfrak{c}$, respectively. We further extend $\pi_{\mathfrak{s}},$ $\pi_{\mathfrak{s}_i}$, and $\pi_{\mathfrak{c}}$ in a $\C$-bilinear way to the complexification $\mathfrak{a}_\C=\C\otimes\mathfrak{a}$.

\begin{definition}
Denote by $\partial_\xi$ the directional derivative in the direction $\xi \in \mathfrak{a}$.
As in the crystallographic case, we define the \textit{Cherednik operator} associated with $(R_+,k)$ in the direction $\xi \in \mathfrak{a}$ by
$$D_\xi(k)\coloneqq D_\xi(R_+,k)\coloneqq\partial_\xi-\braket{\rho(k),\xi}+\sum\limits_{\alpha \in R_+} k_\alpha \braket{\alpha,\xi}\frac{1-s_\alpha}{1-e^{-\alpha}}$$
with $e^{-\alpha}(x)=e^{-\braket{\alpha,x}}$ and $\rho(k)=\rho(R_+,k)=\frac{1}{2}\sum_{\alpha \in R_+} k_\alpha \alpha$. In particular, $D_\xi(k)$ equals $\partial_\xi$ for all $\xi \in \mathfrak{c}$.
\end{definition}

For $W$-invariant open subsets $\Omega \subseteq \mathfrak{a}$ we write $C^0(\Omega)=C(\Omega)$ for the space of complex-valued continuous functions on $\Omega$. For $n \in \N$ denote by $C^n(\Omega)$ the space of $n$-times continuously differentiable functions and write $C^\infty(\Omega)$ for the subspace of smooth functions. The subspaces of compactly supported functions are denoted by $C_c^n(\Omega)$ for $n \in \N_0\cup\set{\infty}$. All of these spaces are equipped with their natural locally convex topology.

\begin{remark}\label{ProdDecomp}
We make the following remarks:
\begin{enumerate}[itemsep=5pt, topsep=5pt]
\item[\rm (i)] The multiplicity functions $k:R \to \C$ on $R$ are in a one-to-one correspondence with sequences $(k_1,\ldots, k_m)$ of multiplicity functions $k_i:R^i \to \C$ on $R^i$ via the identification $k_i=k|_{R^i}$. 
\item[\rm (ii)] The Cherednik operators for integral root systems can be seen as Cherednik operators for crystallographic root systems in the following sense: Choose inside $\mathfrak{c}$ an arbitrary integral root system $R^0$ that spans $\mathfrak{c}$, positive roots $R^0_+ \subseteq R^0$ and consider the trivial multiplicity function $k_0=0$ on $R_0$. Then, for the root system $\widehat{R}=R^0 \sqcup R$ with the multiplicity function $\widehat{k}:\widehat{R} \to \C$ with $\widehat{k}|_{R^0}=k_0=0$ and $\widehat{k}|_{R}=k$, we obtain
$$D_\xi(R_+,k)=D_\xi(\widehat{R}_+,\widehat{k}),$$
where $\widehat{R} \subseteq \mathfrak{a}$ is now a crystallographic root system and
$$\rho(R_+,k)=\rho(\widehat{R}_+,\widehat{k})=\sum_{i=1}^n \rho(R_+^i,k_i)=\sum\limits_{i=0}^n \rho(R_+^i,k_i).$$
\item[\rm (iii)] From Cherednik operators for crystallographic root systems (cf. \cite{O95}), we have:
\begin{enumerate}[itemsep=5pt, topsep=5pt]
\item[\rm (1)] $D_\xi(R_+,k):C^n(\Omega) \to C^{n-1}(\Omega)$ is continuous for all open and $W$-invariant subset $\Omega \subseteq \mathfrak{a}$.
\item[\rm (2)] $\mathrm{supp}(D_\xi(R_+,k)f)\subseteq W.\mathrm{supp}\, f$ for all $f \in C^1(\Omega)$, where $\mathrm{supp}$ denotes the support.
\item[\rm (3)] $D_\xi(R_+,k)D_\eta(R_+,k)=D_\eta(R_+,k)D_\xi(R_+,k)$ as operators from $C^2(\Omega)$ to $C(\Omega)$.
\item[\rm (4)] If $g$ is $W$-invariant, then $D_\xi(k)(f\m g)=(D_\xi(k) f)\m g+ f\m (\partial_\xi g)$.
\end{enumerate}
\end{enumerate}
\end{remark}

\begin{proposition}\label{CherednikOperatorProjection}
For all $\xi \in \mathfrak{a}$ and $f \in C^1(\mathfrak{a})$ we have:
\begin{enumerate}[itemsep=5pt, topsep=5pt]
\item[\rm (i)] The Cherednik operator decomposes as
$$D_\xi(R_+,k)=  \partial_{\pi_{\mathfrak{c}}\xi} + D_{\pi_{\mathfrak{s}}\xi}(R_+,k) = \partial_{\pi_{\mathfrak{c}}\xi} + \sum\limits_{i=1}^n D_{\pi_{\mathfrak{s}_i}\xi}(R_+^i,k_i).$$
Furthermore, $D_{\pi_{\mathfrak{s}_i}\xi}(R_+,k)=D_{\pi_{\mathfrak{s}_i}\xi}(R_+^i,k_i)$.
\item[\rm (ii)] $D_\xi(R_+,k)(f\circ \pi_{\mathfrak{s}})=(D_{\pi_{\mathfrak{s}}\xi}(R_+,k) f) \circ \pi_{\mathfrak{s}}$.
\item[\rm (iii)] $D_\xi(R_+,k)(f\circ \pi_{\mathfrak{c}})=((\partial_{\pi_{\mathfrak{c}}\xi}-\braket{\rho(R_+,k),\xi}) f) \circ \pi_{\mathfrak{c}}$.
\item[\rm (iv)] $D_\xi(R_+,k)(f\circ \pi_{\mathfrak{s}_i})=((D_{\pi_{\mathfrak{s}_i}\xi}(R_+,k)-\braket{\rho(R_+,k)-\rho(R_+^i,k_i),\xi} )f) \circ \pi_{\mathfrak{s}_i}$.
\end{enumerate}
\end{proposition}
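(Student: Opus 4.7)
The plan is to verify all four claims by direct substitution into the definition of $D_\xi(R_+,k)$, exploiting the orthogonal decomposition $\mathfrak{a} = \mathfrak{c} \oplus \bigoplus_i \mathfrak{s}_i$. Two structural observations do all the work: each root $\alpha \in R^i$ lies in $\mathfrak{s}_i$, so that $\braket{\alpha,\xi} = \braket{\alpha,\pi_{\mathfrak{s}_i}\xi}$ and $e^{-\alpha}$ depends only on $\pi_{\mathfrak{s}_i}x$; and the reflection $s_\alpha$ for such $\alpha$ fixes $\mathfrak{s}_i^\perp$ pointwise. Together with Remark \ref{ProdDecomp}(ii), which gives $\rho(R_+,k) = \sum_i \rho(R_+^i,k_i) \in \mathfrak{s}$, these facts control how every term of $D_\xi(k)$ interacts with the decomposition.

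For (i), split $\xi = \pi_\mathfrak{c}\xi + \sum_i \pi_{\mathfrak{s}_i}\xi$. The derivative $\partial_\xi$ splits linearly, and since $\rho(k) \perp \mathfrak{c}$ the constant term reads $-\braket{\rho(k),\pi_\mathfrak{s}\xi}$; for $\alpha \in R_+^i$ the coefficient $\braket{\alpha,\xi}$ involves only $\pi_{\mathfrak{s}_i}\xi$. Collecting yields the first decomposition. To identify $D_{\pi_{\mathfrak{s}_i}\xi}(R_+,k)$ with $D_{\pi_{\mathfrak{s}_i}\xi}(R_+^i,k_i)$, one checks that summands with $\alpha \in R_+^j$, $j \ne i$, drop out because $\braket{\alpha,\pi_{\mathfrak{s}_i}\xi}=0$, and that $\braket{\rho(k),\pi_{\mathfrak{s}_i}\xi} = \braket{\rho(R_+^i,k_i),\pi_{\mathfrak{s}_i}\xi}$ by orthogonality of the $\mathfrak{s}_j$.

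For (ii)--(iv), the workhorse is the chain rule $\partial_\eta(f \circ \pi) = (\partial_{\pi\eta} f) \circ \pi$ for an orthogonal projection $\pi$, together with the fact that $s_\alpha$ commutes with $\pi$ when $\alpha \in \mathrm{range}\,\pi$ and acts trivially on $f\circ \pi$ when $\alpha \in (\mathrm{range}\,\pi)^\perp$. Taking $\pi = \pi_\mathfrak{s}$, all reflection summands (since $R \subseteq \mathfrak{s}$) factor through $\pi_\mathfrak{s}$ and the $\partial_{\pi_\mathfrak{c}\xi}$ piece vanishes, proving (ii). Taking $\pi = \pi_\mathfrak{c}$, every reflection summand annihilates $f \circ \pi_\mathfrak{c}$, leaving only $\partial_{\pi_\mathfrak{c}\xi}$ and the global constant $-\braket{\rho(k),\xi}$, which gives (iii). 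For (iv), taking $\pi = \pi_{\mathfrak{s}_i}$, only reflections by $\alpha \in R^i$ survive (the rest vanish as in (iii)); adding and subtracting $\braket{\rho(R_+^i,k_i),\xi}$ rewrites the result as $\big(D_{\pi_{\mathfrak{s}_i}\xi}(R_+^i,k_i) - \braket{\rho(R_+,k)-\rho(R_+^i,k_i),\xi}\big)f$ composed with $\pi_{\mathfrak{s}_i}$, and (i) then replaces $D_{\pi_{\mathfrak{s}_i}\xi}(R_+^i,k_i)$ by $D_{\pi_{\mathfrak{s}_i}\xi}(R_+,k)$.

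No genuine obstacle arises; the proposition is a bookkeeping exercise, distributing each piece of $D_\xi(k)$ to the appropriate orthogonal component of $\xi$ or $x$. The only point requiring care is the $\rho$-shift in (iv): the intrinsic operator $D_{\pi_{\mathfrak{s}_i}\xi}(R_+^i,k_i)$ already absorbs $-\braket{\rho(R_+^i,k_i),\xi}$, so the residual constant in the final formula is exactly $\braket{\rho(R_+,k)-\rho(R_+^i,k_i),\xi} = \sum_{j\neq i}\braket{\rho(R_+^j,k_j),\xi}$, as claimed.
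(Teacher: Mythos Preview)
Your proposal is correct and follows essentially the same approach as the paper: both proofs are direct computations exploiting the orthogonal decomposition $\mathfrak{a}=\mathfrak{c}\oplus\bigoplus_i\mathfrak{s}_i$, the facts that $R^i\subseteq\mathfrak{s}_i$, $\rho(k)\in\mathfrak{s}$, and that $s_\alpha$ commutes with (or acts trivially through) the relevant projections. The paper organizes (iv) by first decomposing $D_\xi$ via (i) and then evaluating each summand on $f\circ\pi_{\mathfrak{s}_i}$, whereas you compute directly and invoke (i) at the end, but this is only a cosmetic reordering.
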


\begin{proof}
\
\begin{enumerate}[itemsep=5pt, topsep=5pt]
\item[\rm (i)] This is obvious from $\rho(R_+,k)=\sum_{i=1}^n \rho(R_+^i,k_i)$ and $\mathfrak{s}_i\perp \mathfrak{s}_j$ for $i\neq j$.
\item[\rm (ii)] Note that $\pi_{\mathfrak{s}}$ is a $W$-equivariant orthogonal projection, $R \subseteq \mathfrak{s}$ and $\rho(k) \in \mathfrak{s}$. So we have
\begin{align*}
D_\xi(R_+,k)(f\circ \pi_{\mathfrak{s}})(x)
& = (\partial_\xi - \braket{\rho(R_+,k),\xi})(f\circ \pi_{\mathfrak{s}})(x) + \sum\limits_{\alpha \in R_+} k_\alpha \braket{\alpha,\xi} \frac{f(\pi_{\mathfrak{s}}x)-f(\pi_{\mathfrak{s}}s_\alpha x)}{1-e^{-\braket{\alpha,x}}} \\
& = (\partial_{\pi_{\mathfrak{s}}(\xi)} - \braket{\rho(R_+,k),\pi_{\mathfrak{s}}\xi})f(\pi_{\mathfrak{s}}x) + \sum\limits_{\alpha \in R_+} k_\alpha \braket{\alpha,\pi_{\mathfrak{s}}\xi} \frac{f(\pi_{\mathfrak{s}}x)-f(s_\alpha\pi_{\mathfrak{s}}x)}{1-e^{-\braket{\alpha,\pi_{\mathfrak{s}}x}}} \\
& = D_{\pi_{\mathfrak{s}}\xi}(R_+,k)f(\pi_{\mathfrak{s}}x).
\end{align*}
\item[\rm (iii)] Since $W$ acts trivially on $\mathfrak{c}$ we have $\pi_{\mathfrak{c}}s_\alpha = s_\alpha\pi_{\mathfrak{c}}=\pi_\mathfrak{c}$. Hence, part (iii) is immediate from $\partial_\xi(f\circ \pi_{\mathfrak{c}})=\partial_{\pi_{\mathfrak{c}}\xi}f \circ \pi_{\mathfrak{c}}$, as the reflection part of the Cherednik operator vanishes.
\item[\rm (iv)] Since $W_i$ acts trivially on $\mathfrak{c}\oplus\bigoplus_{j\neq i}\mathfrak{s}_i$ we have $\pi_{\mathfrak{s}_i}s_\alpha = s_\alpha\pi_{\mathfrak{s}_i}=\pi_\mathfrak{s_i}$ for all $\alpha \in R_j$ with $j \neq i$. Therefore 
$$D_{\pi_{\mathfrak{s}_j}\xi}(R_+^j,k_j) (f \circ \pi_{\mathfrak{s}_i}) = -\braket{\rho(R_+^j,k_j),\xi}(f\circ \pi_{\mathfrak{s}_i})$$ for $j\neq i$, as well as $\partial_{\pi_{\mathfrak{c}}}(f\circ \pi_{\mathfrak{s}_i})=0$. Thus, $s_\alpha \pi_{\mathfrak{s}_i}=\pi_{\mathfrak{s}_i}s_\alpha$ for $\alpha \in R_i$ leads with part (i) to
\begin{align*}
D_\xi(R_+,k)(f\circ \pi_{\mathfrak{s}_i})(x) &= \left(D_{\pi_{\mathfrak{s}_i}\xi}(R_+^i,k_i)-\sum_{j\neq i}\braket{\rho(R_+^j,k_j),\xi}\right)(f\circ \pi_{\mathfrak{s}_i})(x) \\
&= ((D_{\pi_{\mathfrak{s}_i}\xi}(R_+,k)-\braket{\rho(R_+,k)-\rho(R_+^i,k_i),\xi} )f)(\pi_{\mathfrak{s}_i}x).
\end{align*}
\end{enumerate}
\end{proof}

Due to Remark \ref{ProdDecomp} (ii) we can understand the Cherednik operators of integral root systems as Cherednik operators of crystallographic root systems by extending (arbitrarily) the root system to a crystallographic one and putting the corresponding multiplicity to $0$. As the Cherednik operators does not depend on this extension, the work of Opdam in \cite[Theorem 3.15]{O95} shows that there exists to $(R_+,k)$, with $\Re \, k \ge 0$, and any $\lambda \in \mathfrak{a}_\C$ a unique analytic function (independent of the extension of the root system)
$$G_k^{\mathfrak{a}}(R_+;\lambda,\m):\mathfrak{a} \to \C,$$
called the Cherednik kernel associated with $(R_+,k)$ on $\mathfrak{a}$, with the property that $f(x)=G_k^{\mathfrak{a}}(R_+;\lambda,x)$ solves
$$\begin{cases}
D_\xi(R_+;k)f=\braket{\lambda,\xi}f, & \text{ for all } \xi \in\mathfrak{a}, \\
\hspace{32pt} f(0)=1.
\end{cases}$$
To be more precise, we have $G_k^{\mathfrak{a}}(R_+,\lambda,\m)=G_{\widehat{k}}^{\mathfrak{a}}(\widehat{R}_+,\lambda,\m)$ with $(\widehat{R}_+,\widehat{k})$ as in Remark \ref{ProdDecomp} (ii), where the latter Cherednik kernel is the one constructed by Opdam. \\
We define the hypergeometric function associated with $(R,k)$ on $\mathfrak{a}$ as
$$F_k^{\mathfrak{a}}(R,\lambda,x)=\frac{1}{\# W}\sum_{w\in W} G_k^{\mathfrak{a}}(R_+,\lambda,wx).$$
The hypergeometric function for crystallographic root systems was already introduced by Heckman, cf. \cite{HO87}. But, in contrast to the Cherednik kernel, we have in general $F_k^{\mathfrak{a}}(R;\lambda,x)\neq F_{\widehat{k}}^{\mathfrak{a}}(\widehat{R};\lambda,x)$, as already for $k=0$ we have
$$F_0^{\mathfrak{a}}(R;\lambda,x)=\frac{1}{\# W} \sum_{w \in W}e^{\braket{\lambda, wx}} \neq \frac{1}{\# W(\widehat{R})} \sum_{w \in W(\widehat{R})}e^{\braket{\lambda, wx}}=F_{0}^{\mathfrak{a}}(\widehat{R};\lambda,x).$$

\begin{theorem}\label{NonSpanningCherednik}
The Cherednik kernel $G_k^{\mathfrak{a}}(R_+;\m,\m)$ and hypergeometric function $F_k^{\mathfrak{a}}(R;\m,\m)$ decompose as
\begin{align*}
G_k^{\mathfrak{a}}(R_+;\lambda,x) &=e^{\braket{\pi_{\mathfrak{c}}\lambda,\pi_{\mathfrak{c}}x}}G_k^{\mathfrak{s}}(R_+;\pi_{\mathfrak{s}}\lambda,\pi_{\mathfrak{s}}x) \\
&= e^{\braket{\pi_{\mathfrak{c}}\lambda,\pi_{\mathfrak{c}}x}}G_{k_1}^{\mathfrak{s}_1}(R_+^1;\pi_{\mathfrak{s}_1}\lambda,\pi_{\mathfrak{s}_1}x) \cdots G_{k_m}^{\mathfrak{s}_m}(R^m_+;\pi_{\mathfrak{s}_m}\lambda,\pi_{\mathfrak{s}_m}x); \\
F_k^{\mathfrak{a}}(R;\lambda,x) &=e^{\braket{\pi_{\mathfrak{c}}\lambda,\pi_{\mathfrak{c}}x}}F_k^{\mathfrak{s}}(R;\pi_{\mathfrak{s}}\lambda,\pi_{\mathfrak{s}}x) \\
&= e^{\braket{\pi_{\mathfrak{c}}\lambda,\pi_{\mathfrak{c}}x}}F_{k_1}^{\mathfrak{s}_1}(R^1;\pi_{\mathfrak{s}_1}\lambda,\pi_{\mathfrak{s}_1}x) \cdots F_{k_m}^{\mathfrak{s}_m}(R^m;\pi_{\mathfrak{s}_m}\lambda,\pi_{\mathfrak{s}_m}x);
\end{align*}
for all $\lambda \in \mathfrak{a}_\C$ and $x \in \mathfrak{a}$. Moreover, $G_k^{\mathfrak{a}}$ and $F_k^{\mathfrak{a}}$ extend to holomorphic functions
$$\mathfrak{a}_\C \times (\mathfrak{a}+i\Omega) \to \C, \quad (\lambda,z) \to G_k(\lambda,z)$$
with the sector $\Omega=\set{x \in \mathfrak{a} \mid \abs{\braket{\alpha,x}}<\pi \text{ for all } \alpha \in R}$.
Moreover, the functions are holomorphic in $k$ on the set of multiplicity function with $\Re\, k_\alpha \ge 0$ for all $\alpha \in R$.
\end{theorem}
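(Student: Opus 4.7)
The plan is to establish the product decomposition of $G_k^{\mathfrak{a}}$ by an eigenfunction/uniqueness argument, then obtain the decomposition of $F_k^{\mathfrak{a}}$ by averaging, and finally inherit the holomorphic extensions from the known crystallographic case.

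For the first part, I would define
$$g(x) := e^{\braket{\pi_{\mathfrak{c}}\lambda,\pi_{\mathfrak{c}}x}} \prod_{i=1}^{m} G_{k_i}^{\mathfrak{s}_i}(R_+^i;\pi_{\mathfrak{s}_i}\lambda,\pi_{\mathfrak{s}_i}x)$$
and verify that $g(0)=1$ and $D_\xi(R_+,k)g = \braket{\lambda,\xi}g$ for every $\xi\in\mathfrak{a}$. The normalization is immediate. For the eigenfunction equation, Proposition \ref{CherednikOperatorProjection}(i) splits the operator as $D_\xi(R_+,k) = \partial_{\pi_{\mathfrak{c}}\xi} + \sum_{i} D_{\pi_{\mathfrak{s}_i}\xi}(R_+^i,k_i)$. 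The derivative $\partial_{\pi_{\mathfrak{c}}\xi}$ only hits the exponential (the remaining factors depend solely on the $\mathfrak{s}_i$-projections), producing $\braket{\pi_{\mathfrak{c}}\lambda,\xi}g$. For each summand $D_{\pi_{\mathfrak{s}_i}\xi}(R_+^i,k_i)$, I write $g = (G_{k_i}^{\mathfrak{s}_i}(R_+^i;\pi_{\mathfrak{s}_i}\lambda,\cdot)\circ\pi_{\mathfrak{s}_i})\cdot h_i$, where $h_i$ collects the remaining factors. Since every $s_\alpha$ with $\alpha\in R^i$ fixes $\mathfrak{c}$ pointwise and fixes each $\mathfrak{s}_j$ with $j\ne i$ pointwise, the function $h_i$ is $W(R^i)$-invariant, so the Leibniz rule from Remark \ref{ProdDecomp}(iii)(4) applies. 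The correction term $\partial_{\pi_{\mathfrak{s}_i}\xi}h_i$ vanishes by orthogonality, and Proposition \ref{CherednikOperatorProjection}(ii) (for $R^i$ inside $\mathfrak{a}$ with spanning subspace $\mathfrak{s}_i$) evaluates the remaining piece as $\braket{\pi_{\mathfrak{s}_i}\lambda,\xi}(G_{k_i}^{\mathfrak{s}_i}\circ\pi_{\mathfrak{s}_i})$. Summing yields $\braket{\lambda,\xi}g$, and uniqueness (via Remark \ref{ProdDecomp}(ii) and \cite[Theorem 3.15]{O95}) finishes this step.

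Next, using $W=\prod_{i=1}^m W_i$ with $W_i$ acting on $\mathfrak{s}_i$ and trivially on the orthogonal complement, the average $F_k^{\mathfrak{a}}(R;\lambda,x)=\frac{1}{\#W}\sum_{w\in W}G_k^{\mathfrak{a}}(R_+;\lambda,wx)$ decouples. Substituting the product formula just proved and factoring the sum over $w=(w_1,\dots,w_m)$ yields $e^{\braket{\pi_{\mathfrak{c}}\lambda,\pi_{\mathfrak{c}}x}}\prod_i F_{k_i}^{\mathfrak{s}_i}(R^i;\pi_{\mathfrak{s}_i}\lambda,\pi_{\mathfrak{s}_i}x)$, and the intermediate identity involving $F_k^{\mathfrak{s}}$ is analogous.

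For the holomorphic extension, Opdam's result provides, for each crystallographic factor, a holomorphic extension of $G_{k_i}^{\mathfrak{s}_i}(R_+^i;\mu,\cdot)$ to $\mathfrak{s}_{i,\C}\times(\mathfrak{s}_i+i\Omega_i)$ with $\Omega_i=\{y\in\mathfrak{s}_i\mid\abs{\braket{\alpha,y}}<\pi\text{ for }\alpha\in R^i\}$, and joint holomorphy in $k$ on $\Re k\ge 0$. For $z\in\mathfrak{a}+i\Omega$ and $\alpha\in R^i\subseteq\mathfrak{s}_i$, self-adjointness of $\pi_{\mathfrak{s}_i}$ gives $\braket{\alpha,\pi_{\mathfrak{s}_i}\Im z}=\braket{\alpha,\Im z}$, so $\pi_{\mathfrak{s}_i}\Im z\in\Omega_i$ and each factor is holomorphic on the claimed domain. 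Since the exponential prefactor is entire and the product preserves holomorphy, the assertion for $G_k^{\mathfrak{a}}$ (and, by averaging, for $F_k^{\mathfrak{a}}$) follows.

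The main obstacle is purely conceptual rather than computational: it is the Leibniz-rule step, because $h_i$ is not $W$-invariant (only $W(R^i)$-invariant), so one must recognize that the relevant Cherednik operator in the decomposition uses only the reflections $s_\alpha$ with $\alpha\in R^i$, and that Remark \ref{ProdDecomp}(iii)(4) then applies to $D_{\pi_{\mathfrak{s}_i}\xi}(R_+^i,k_i)$ with the correct group of invariance. Once this bookkeeping is set up correctly, the orthogonality of the $\mathfrak{s}_i$ makes every other term decouple cleanly.
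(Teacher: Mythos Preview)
Your proposal is correct and follows essentially the same strategy as the paper: verify that the candidate product is a normalized joint eigenfunction of the Cherednik operators and invoke uniqueness, then average over $W=\prod_i W_i$ for $F_k$, and finally inherit holomorphy from the crystallographic factors. The only organizational difference is that the paper treats the $\mathfrak{c}$--$\mathfrak{s}$ split and the $\mathfrak{s}=\bigoplus_i \mathfrak{s}_i$ split in two separate passes, invoking Proposition~\ref{CherednikOperatorProjection}(iii) and (iv) directly, whereas you handle the full product in one pass and make the Leibniz step (Remark~\ref{ProdDecomp}(iii)(4) with $W(R^i)$-invariance of $h_i$) explicit, then appeal to Proposition~\ref{CherednikOperatorProjection}(ii) for the single root system $R^i$ in $\mathfrak{a}$; the content is the same.
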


\begin{proof}
Consider for $\lambda \in \mathfrak{a}_\C$ the function
$$f(x)=e^{\braket{\pi_{\mathfrak{c}}\lambda,\pi_{\mathfrak{c}}z}}G_k^{\mathfrak{s}}(\pi_{\mathfrak{s}}\lambda,\pi_{\mathfrak{s}}x).$$
Then Proposition \ref{CherednikOperatorProjection} (i),(ii) and (iii) show that $f$ satisfies $D_\xi(R_+,k)f(x)=\braket{\lambda,\xi}f(x)$ and $f(0)=1$. By the uniqueness of the Cherednik kernel we have
$$f(x)=G_k^{\mathfrak{a}}(R_+;\lambda,x).$$
If we consider for $\lambda \in \mathfrak{a}_\C$ the function
$$f(x)=e^{\braket{\pi_{\mathfrak{c}}\lambda,\pi_{\mathfrak{c}}z}}G_{k_1}^{\mathfrak{s}_1}(\pi_{\mathfrak{s}_1}\lambda,\pi_{\mathfrak{s}_1}x) \cdots G_{k_m}^{\mathfrak{s}_m}(\pi_{\mathfrak{s}_m}\lambda,\pi_{\mathfrak{s}_m}x).$$
Then Proposition \ref{CherednikOperatorProjection} (i),(iii) and (iv) show that $f$ satisfies $D_\xi(R_+,k)f(x)=\braket{\lambda,\xi}f(x)$ and $f(0)=1$. The uniqueness of the Cherednik kernel leads to
$$f(x)=G_k^{\mathfrak{a}}(R_+;\lambda,x).$$
The statement for the hypergeometric functions follows from the one for the Cherednik kernel as $W=\prod_{i=1}^m W_i$ and $W_i$ acts trivial on $\mathfrak{c}\oplus\bigoplus_{j\neq i} \mathfrak{s}_j$. \\
The domain of holomorphicity was proven originally in \cite{KO08} for the hypergeometric function $F_k^{\mathfrak{s}}(\lambda,z)=\frac{1}{\# W}\sum_{w \in W}G_k^{\mathfrak{s}}(\lambda,wz)$. But an observation of the construction of $G_k^{\mathfrak{s}}$ in \cite[Theorem 3.15]{O95} shows that $G_k^{\mathfrak{s}}$ has the same domain of holomorphicity. Hence, it is true for $G_k^{\mathfrak{a}}$ and $F_k^{\mathfrak{a}}$.
\end{proof}

From now on, we will simply write
$$G_k=G_k^{\mathfrak{a}}\quad \text{ and }\quad  F_k=F_k^{\mathfrak{a}}.$$
The following proposition was already proven in \cite[Section 4]{BR23} in the crystallographic case, but an inspection of the proofs shows that they are still true for integral root systems.

\begin{proposition}\label{CherednikMinus}
Let $(D_\xi(R_+,k))_{\xi \in \mathfrak{a}}$ be the Cherednik operators associated with $(R_+,k)$. Put $w_0\coloneqq w_0(R_+)$ for the longest element of $W$ with respect to the choice of $R_+$. Then:
\begin{enumerate}[itemsep=5pt, topsep=5pt]
\item[\rm (i)] $wD_\xi(R_+,k)w^{-1}=D_{w\xi}(wR_+,k)$ for all $w \in W$.
\item[\rm (ii)] $D_\xi(R_+,k)f^- = -(D_\xi(-R_+,k)f)^-,$ where $f^-(x)=f(-x)$.
\item[\rm (iii)] $G_k(R_+,\lambda,z)=G_k(wR_+;w\lambda,wz)$ for all $w \in W$. \\
Moreover, the hypergeometric function $F_k$ does not depend on the choice of $R_+$.
\item[\rm (iv)] $G_k(R_+;\lambda,-z)=G_k(R_+;-w_0\lambda,w_0z)$.
\item[\rm (v)] $F_k(R;\lambda,-z)=F_k(R;-\lambda,z)$.
\end{enumerate}
\end{proposition}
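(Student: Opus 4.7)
The overall strategy is to reduce each of (i)--(v) either to a direct calculation with the defining formula of $D_\xi(R_+,k)$ or to an invocation of the uniqueness of the Cherednik kernel in Opdam's construction. Since the spanning condition on $R$ is nowhere used in these manipulations, the proofs of \cite[Section~4]{BR23} extend verbatim to the integral setting; I sketch only the essential identities.

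For (i), I would expand $wD_\xi(R_+,k)w^{-1}$ term by term using $w\partial_\xi w^{-1}=\partial_{w\xi}$, $ws_\alpha w^{-1}=s_{w\alpha}$, and the isometry identity $e^{-\alpha}\circ w^{-1}=e^{-w\alpha}$; reindexing $\beta=w\alpha$ in the sum together with the $W$-invariance of $k$ (so $k_{w^{-1}\beta}=k_\beta$ and $w\rho(R_+,k)=\rho(wR_+,k)$) then reassembles the defining formula of $D_{w\xi}(wR_+,k)$. For (ii), I would apply the formula to $f^-$ using $\partial_\xi f^-=-(\partial_\xi f)^-$, $s_\alpha f^-=(s_\alpha f)^-$, and $e^{-\alpha}(-z)=e^{\alpha}(z)$; after collecting signs, the sum over $R_+$ converts into a sum over $-R_+$, and combined with $\rho(-R_+,k)=-\rho(R_+,k)$ this yields $D_\xi(R_+,k)f^-=-(D_\xi(-R_+,k)f)^-$.

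For (iii), I set $h(z):=G_k(wR_+;w\lambda,wz)$ and use the chain rule together with the reindexing from (i) to verify that $D_\xi(R_+,k)h(z)=(D_{w\xi}(wR_+,k)G_k(wR_+;w\lambda,\m))(wz)=\braket{w\lambda,w\xi}h(z)=\braket{\lambda,\xi}h(z)$; since $h(0)=1$, uniqueness of the Cherednik kernel forces $h=G_k(R_+;\lambda,\m)$. For independence of $F_k$ from the choice of $R_+$, the same reindexing applied to the averaged sum gives $F_k^{R_+}(R;\lambda,z)=F_k^{wR_+}(R;w\lambda,z)$, and combining this with the $W$-invariance of $F_k$ in $\lambda$ --- which reduces via Remark~\ref{ProdDecomp}(ii) to the crystallographic case, where it is a standard consequence of $F_k$ being a joint eigenfunction of the commuting family of $W$-invariant Heckman differential operators --- yields the claim.

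For (iv), writing $f:=G_k(R_+;\lambda,\m)$ and applying (ii) to the Cherednik equation for $f$ gives $D_\xi(-R_+,k)f^-=-\braket{\lambda,\xi}f^-$, so by uniqueness $f^-(z)=G_k(-R_+;-\lambda,z)$; applying (iii) with $w=w_0$ and using the classical facts $w_0 R_+=-R_+$ and $w_0^2=e$ converts this into $G_k(R_+;\lambda,-z)=G_k(R_+;-w_0\lambda,w_0 z)$. Finally (v) follows by summing (iv) over $v\in W$ in the definition of $F_k(R;\lambda,-z)$, reindexing $u=w_0 v$ to obtain $F_k(R;\lambda,-z)=F_k(R;-w_0\lambda,z)$, and once more invoking $W$-invariance in $\lambda$. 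The only mildly non-routine step throughout is this appeal to $W$-invariance of $F_k$ in $\lambda$, but in the integral setting Remark~\ref{ProdDecomp}(ii) immediately reduces it to the crystallographic version.
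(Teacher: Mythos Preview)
Your proposal is correct and aligns with the paper's own treatment: the paper does not give a standalone proof but simply remarks that the proposition was proven in \cite[Section~4]{BR23} in the crystallographic case and that an inspection of those proofs shows they remain valid for integral root systems. Your sketch makes this inspection explicit---the direct computations for (i) and (ii), the uniqueness arguments for (iii) and (iv), and the averaging plus $W$-invariance in $\lambda$ for (v) are exactly the arguments from \cite{BR23}, and you correctly observe that none of them uses the spanning condition on $R$.
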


In \cite{S08} the subsequent theorem was proven for crystallographic root systems, but it extends naturally to integral root systems by the decomposition from Theorem \ref{ProdDecomp}.

\begin{proposition}\label{GrowthCherednik}
For $\lambda,\mu \in \mathfrak{a}_\C$ and $x \in \mathfrak{a}$ the Cherednik kernel satisfies
\begin{enumerate}[itemsep=5pt, topsep=5pt]
\item[\rm (i)] $G_k>0$ on $\mathfrak{a}\times\mathfrak{a}$.
\item[\rm (ii)] $\abs{G_k(R_+;\lambda+\mu,x)} \le G_k(R_+;\Re\, \mu,x)\m e^{\max_{w \in W} \braket{\Re \, \lambda,wx}}.$
\item[\rm (iii)] $\abs{p(\tfrac{\partial}{\partial \lambda})q(\tfrac{\partial}{\partial x})G_k(R_+;\lambda,x)} \le C(1+\abs{x})^{\deg(p)}(1+\abs{\lambda})^{\deg(q)} F_k(R;0,x) e^{\max_{w \in W}\braket{w\Re(\lambda),x}}.$
\end{enumerate}
The estimates hold also for the hypergeometric function $F_k$.
\end{proposition}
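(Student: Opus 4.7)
The strategy is to deduce everything from the crystallographic case of \cite{S08} via the product decomposition of Theorem \ref{NonSpanningCherednik},
$$G_k(R_+;\lambda,x) = e^{\braket{\pi_{\mathfrak{c}}\lambda, \pi_{\mathfrak{c}}x}} \prod_{i=1}^m G_{k_i}^{\mathfrak{s}_i}(R_+^i; \pi_{\mathfrak{s}_i}\lambda, \pi_{\mathfrak{s}_i}x).$$
Each factor on the right is a Cherednik kernel for a crystallographic root system, so all three estimates (i)--(iii) are already available factor-by-factor from \cite{S08}; what remains is to transport them through the product and the extra exponential on $\mathfrak{c}$. The claims for $F_k$ will then follow from $F_k = \frac{1}{\#W}\sum_{w \in W} G_k(R_+;\lambda,w\cdot)$ by the triangle inequality, since the right-hand sides of (ii) and (iii) are $W$-invariant in $x$; moreover, $F_k$ itself factorizes by Theorem \ref{NonSpanningCherednik}, so Schapira's bound involving $F_{k_i}^{\mathfrak{s}_i}(R^i; 0, \pi_{\mathfrak{s}_i}x)$ on each factor reassembles into exactly $F_k(R; 0, x)$ in the final inequality.

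Statement (i) is immediate: the exponential is positive on $\mathfrak{a}\times\mathfrak{a}$, and each crystallographic factor is positive by \cite{S08}. For (ii), I would first split
$$\abs{e^{\braket{\pi_{\mathfrak{c}}(\lambda+\mu), \pi_{\mathfrak{c}}x}}} = e^{\braket{\pi_{\mathfrak{c}}\Re\mu, \pi_{\mathfrak{c}}x}} \cdot e^{\braket{\pi_{\mathfrak{c}}\Re\lambda, \pi_{\mathfrak{c}}x}},$$
apply the Schapira estimate to each $G_{k_i}^{\mathfrak{s}_i}$, and multiply. The key combinatorial identity is that, because $W = \prod_i W_i$ acts trivially on $\mathfrak{c}$ and each $W_i$ acts trivially on $\mathfrak{s}_j$ for $j\neq i$, for $w = (w_1,\ldots,w_m) \in W$ one has $\pi_{\mathfrak{c}}wx = \pi_{\mathfrak{c}}x$ and $\pi_{\mathfrak{s}_i}wx = w_i\pi_{\mathfrak{s}_i}x$, whence
$$\max_{w \in W}\braket{\Re\lambda, wx} = \braket{\pi_{\mathfrak{c}}\Re\lambda, \pi_{\mathfrak{c}}x} + \sum_{i=1}^m \max_{w_i \in W_i} \braket{\pi_{\mathfrak{s}_i}\Re\lambda, w_i \pi_{\mathfrak{s}_i}x}.$$
This is precisely what is needed to reassemble the factorwise bounds into the asserted one.

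For (iii), $x$-derivatives in directions $\xi \in \mathfrak{a}$ decompose via $\mathfrak{a} = \mathfrak{c} \oplus \bigoplus_i \mathfrak{s}_i$, so by the Leibniz rule any $q(\partial_x)$ distributes between the exponential factor (where it produces a polynomial of degree at most $\deg q$ in $\pi_{\mathfrak{c}}\lambda$) and the crystallographic factors, where the Schapira bound applies. The $\lambda$-derivatives $p(\partial_\lambda)$ are handled analogously --- now producing a polynomial of degree at most $\deg p$ in $\pi_{\mathfrak{c}}x$ from the exponential --- or alternatively by Cauchy's integral formula on a unit polydisc around $\lambda$ combined with part (ii). The main obstacle is not analytic but combinatorial: one must verify that the factorizations of $\mathfrak{a}$, $W$, and $G_k$ align so that the $W$-max factorizes as displayed above, and that the constants generated by the Leibniz expansion aggregate into the single polynomial factors $(1+\abs{x})^{\deg p}(1+\abs{\lambda})^{\deg q}$ and the single exponential factor on the right-hand side of (iii).
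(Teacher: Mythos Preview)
Your proposal is correct and follows exactly the approach the paper indicates: the paper merely states that the result was proven in \cite{S08} for crystallographic root systems and ``extends naturally to integral root systems by the decomposition'' of Theorem~\ref{NonSpanningCherednik}, without spelling out any details. Your write-up supplies precisely those details---the factorization of the $W$-maximum via $W=\prod_i W_i$, the handling of the central exponential, and the Leibniz/Cauchy argument for (iii)---so it is a faithful (and more explicit) realization of the paper's one-line proof.
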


We end this section with a small Theorem in rational Dunkl theory (the root systems does not have to satisfy the condition $\braket{\alpha^\vee,\beta}\in \Z$ for $\alpha,\beta \in R$), which is already know, but will be stated for a sake of completeness. Consider the rational Dunkl operators 
$$T_\xi(k)=T_\xi(R,k)=\partial_\xi+\sum\limits_{\alpha \in R_+}k_\alpha \braket{\alpha,\xi}\frac{1-s_\alpha}{\braket{\alpha,\m}}$$
as introduced by Dunkl in \cite{D89}, cf. also \cite{R03} for an overview on Dunkl theory.
For every $\lambda \in \mathfrak{a}_\C$ there exists a unique entire function $E_k(\lambda,\m)=E_k^{\mathfrak{a}}(R;\lambda,\m)$ with
$$E_k(\lambda,0)=1,\quad T_\xi(k)E_k(\lambda,\m)=\braket{\lambda,\xi}E_k(\lambda,\m), \quad \xi \in \mathfrak{a}.$$
The resulting entire function $E_k:\mathfrak{a}_\C \times \mathfrak{a}_\C \to \C$ is called the Dunkl kernel.
The Dunkl-Bessel function $J_k^{\mathfrak{a}}$ on $\mathfrak{a}$ associated with $(R,k)$ is then defined as
$$J_k(R;\lambda,z)=J_k(\lambda,z)=\frac{1}{\#W} \sum\limits_{w \in W} E_k(\lambda,wz), \quad \lambda,z \in \mathfrak{a}_\C.$$

With the same proof as in Proposition \ref{CherednikOperatorProjection} one has
\begin{proposition}\label{DunklOperatorProjection}
For all $\xi \in \mathfrak{a}$ and $f \in C^1(\mathfrak{a})$ we have:
\begin{enumerate}[itemsep=5pt, topsep=5pt]
\item[\rm (i)] The Dunkl operators decompose as
$$T_\xi(R,k)=  \partial_{\pi_{\mathfrak{c}}\xi} + T_{\pi_{\mathfrak{s}}\xi}(R,k) = \partial_{\pi_{\mathfrak{c}}\xi} + \sum\limits_{i=1}^n T_{\pi_{\mathfrak{s}_i}\xi}(R^i,k_i).$$
Furthermore, $T_{\pi_{\mathfrak{s}_i}\xi}(R,k)=T_{\pi_{\mathfrak{s}_i}\xi}(R^i,k_i)$.
\item[\rm (ii)] $T_\xi(R,k)(f\circ \pi_{\mathfrak{s}})=(T_{\pi_{\mathfrak{s}}\xi}(R,k) f) \circ \pi_{\mathfrak{s}}$.
\item[\rm (iii)] $T_\xi(R,k)(f\circ \pi_{\mathfrak{c}})=(\partial_{\pi_{\mathfrak{c}}\xi} f) \circ \pi_{\mathfrak{c}}$.
\item[\rm (iv)] $T_\xi(R,k)(f\circ \pi_{\mathfrak{s}_i})=(T_{\pi_{\mathfrak{s}_i}\xi}(R,k)f) \circ \pi_{\mathfrak{s}_i}$.
\end{enumerate}
\end{proposition}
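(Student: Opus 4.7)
The plan is to mirror the proof of Proposition \ref{CherednikOperatorProjection} line by line, exploiting that the rational Dunkl operator differs from the trigonometric Cherednik operator only in two ways: the kernel $1/\braket{\alpha,\cdot}$ replaces $1/(1-e^{-\alpha})$, and the shift $\braket{\rho(k),\xi}$ is absent. The absence of $\rho(k)$ is what makes the statements of (iii) and (iv) strictly simpler than their Cherednik counterparts (no $\braket{\rho(k),\xi}$-terms survive). Throughout, the structural facts I will use are: $R \subseteq \mathfrak{s}$, the orthogonal decomposition $\mathfrak{s} = \bigoplus_i \mathfrak{s}_i$ with $R^i \subseteq \mathfrak{s}_i$, the $W$-equivariance of each projection $\pi_{\mathfrak{s}}$, $\pi_{\mathfrak{s}_i}$, $\pi_{\mathfrak{c}}$, and the fact that $W$ acts trivially on $\mathfrak{c}$ while $W_j$ acts trivially on $\mathfrak{s}_i$ for $j\neq i$.

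For part (i), I would write $\partial_\xi = \partial_{\pi_{\mathfrak{s}}\xi} + \partial_{\pi_{\mathfrak{c}}\xi}$ and observe that $\braket{\alpha,\xi} = \braket{\alpha,\pi_{\mathfrak{s}}\xi} = \braket{\alpha,\pi_{\mathfrak{s}_i}\xi}$ for $\alpha\in R^i$, because $R^i \subseteq \mathfrak{s}_i$ and the $\mathfrak{s}_i$ are mutually orthogonal as well as orthogonal to $\mathfrak{c}$. Splitting $R_+ = \bigsqcup_{i=1}^m R_+^i$ then yields both decompositions simultaneously, and the equality $T_{\pi_{\mathfrak{s}_i}\xi}(R,k) = T_{\pi_{\mathfrak{s}_i}\xi}(R^i,k_i)$ follows from the same splitting applied to the operator on the left.

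For part (ii), using that $\pi_{\mathfrak{s}}$ commutes with each $s_\alpha$ (for $\alpha \in R$) and that $\braket{\alpha,x} = \braket{\alpha,\pi_{\mathfrak{s}} x}$, the computation
\[
T_\xi(R,k)(f\circ \pi_{\mathfrak{s}})(x) = \partial_{\pi_{\mathfrak{s}}\xi} f(\pi_{\mathfrak{s}}x) + \sum_{\alpha\in R_+} k_\alpha \braket{\alpha,\pi_{\mathfrak{s}}\xi}\,\frac{f(\pi_{\mathfrak{s}}x) - f(s_\alpha \pi_{\mathfrak{s}}x)}{\braket{\alpha,\pi_{\mathfrak{s}}x}}
\]
identifies the right-hand side as $(T_{\pi_{\mathfrak{s}}\xi}(R,k)f)(\pi_{\mathfrak{s}}x)$. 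For part (iii), since $W$ acts trivially on $\mathfrak{c}$ we have $s_\alpha \pi_{\mathfrak{c}} = \pi_{\mathfrak{c}}$, so every reflection difference $(1-s_\alpha)(f\circ\pi_{\mathfrak{c}})$ vanishes identically; only $\partial_{\pi_{\mathfrak{c}}\xi}$ survives. For part (iv), I would combine (i) with the observation that $\pi_{\mathfrak{s}_i}$ commutes with $s_\alpha$ for $\alpha \in R^i$ (so that block acts as in (ii)), while for $\alpha \in R^j$ with $j\neq i$ the reflection $s_\alpha$ fixes $\pi_{\mathfrak{s}_i}x$, making $(1-s_\alpha)(f\circ\pi_{\mathfrak{s}_i}) = 0$; together with $\partial_{\pi_{\mathfrak{c}}\xi}(f\circ\pi_{\mathfrak{s}_i}) = 0$, this collapses the decomposition to the stated formula.

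There is no real obstacle here; the whole proof is bookkeeping on the orthogonal decomposition $\mathfrak{a} = \mathfrak{c}\oplus\bigoplus_i \mathfrak{s}_i$ combined with the $W$-equivariance of the projections. The only point where one must be slightly careful is in (iv), to confirm that no spurious $\rho$-like contribution appears from the cross root systems $R^j$, $j\neq i$; but since the rational operator has no $\rho(k)$ at the outset, these cross-terms vanish completely rather than leaving behind a shift as in the Cherednik case.
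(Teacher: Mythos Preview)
Your proposal is correct and matches the paper's own approach exactly: the paper simply states that the result follows ``with the same proof as in Proposition~\ref{CherednikOperatorProjection}'', and what you have written is precisely that proof adapted to the rational Dunkl operator, including the correct observation that the absence of the $\rho(k)$-shift makes parts (iii) and (iv) cleaner than their Cherednik analogues.
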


Similar to Theorem \ref{NonSpanningCherednik} one can then proof the following:

\begin{theorem}\label{NonSpanningDunkl}
The Dunkl kernel $E_k^{\mathfrak{a}}(R;\m,\m)$ and the Bessel function $J_k^{\mathfrak{a}}(R;\m,\m)$ decompose as
\begin{align*}
E_k^{\mathfrak{a}}(R;\lambda,x) &=e^{\braket{\pi_{\mathfrak{c}}\lambda,\pi_{\mathfrak{c}}x}}E_k^{\mathfrak{s}}(R;\pi_{\mathfrak{s}}\lambda,\pi_{\mathfrak{s}}x) \\
&= e^{\braket{\pi_{\mathfrak{c}}\lambda,\pi_{\mathfrak{c}}x}}E_{k_1}^{\mathfrak{s}_1}(R^1;\pi_{\mathfrak{s}_1}\lambda,\pi_{\mathfrak{s}_1}x) \cdots E_{k_m}^{\mathfrak{s}_m}(R^m;\pi_{\mathfrak{s}_m}\lambda,\pi_{\mathfrak{s}_m}x); \\
J_k^{\mathfrak{a}}(R;\lambda,x) &=e^{\braket{\pi_{\mathfrak{c}}\lambda,\pi_{\mathfrak{c}}x}}J_k^{\mathfrak{s}}(R;\pi_{\mathfrak{s}}\lambda,\pi_{\mathfrak{s}}x) \\
&= e^{\braket{\pi_{\mathfrak{c}}\lambda,\pi_{\mathfrak{c}}x}}J_{k_1}^{\mathfrak{s}_1}(R^1;\pi_{\mathfrak{s}_1}\lambda,\pi_{\mathfrak{s}_1}x) \cdots J_{k_m}^{\mathfrak{s}_m}(R^m;\pi_{\mathfrak{s}_m}\lambda,\pi_{\mathfrak{s}_m}x); 
\end{align*}
for all $\lambda \in \mathfrak{a}_\C$ and $x \in \mathfrak{a}$.
\end{theorem}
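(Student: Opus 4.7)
The plan is to copy the proof scheme of Theorem~\ref{NonSpanningCherednik}, replacing Proposition~\ref{CherednikOperatorProjection} by its Dunkl analogue Proposition~\ref{DunklOperatorProjection} and invoking the uniqueness of the Dunkl kernel as the unique entire solution of the joint eigenvalue problem $T_\xi(R,k)f=\braket{\lambda,\xi}f$, $\xi\in\mathfrak{a}$, normalized by $f(0)=1$.

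For the first identity in the decomposition of $E_k^{\mathfrak{a}}$, I would consider the candidate
$$f(x)=e^{\braket{\pi_{\mathfrak{c}}\lambda,\pi_{\mathfrak{c}}x}}E_k^{\mathfrak{s}}(R;\pi_{\mathfrak{s}}\lambda,\pi_{\mathfrak{s}}x).$$
By Proposition~\ref{DunklOperatorProjection}(i), write $T_\xi(R,k)=\partial_{\pi_{\mathfrak{c}}\xi}+T_{\pi_{\mathfrak{s}}\xi}(R,k)$. The first summand acts only on the exponential prefactor (since $E_k^{\mathfrak{s}}(R;\pi_{\mathfrak{s}}\lambda,\pi_{\mathfrak{s}}\,\cdot)$ is constant along $\mathfrak{c}$), giving $\braket{\pi_{\mathfrak{c}}\lambda,\pi_{\mathfrak{c}}\xi}f$. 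By part~(iii), the first summand annihilates the second factor; by part~(ii) together with the defining eigenvalue equation for $E_k^{\mathfrak{s}}$, the second summand contributes $\braket{\pi_{\mathfrak{s}}\lambda,\pi_{\mathfrak{s}}\xi}f$. Summing and using orthogonality of the projections gives $T_\xi(R,k)f=\braket{\lambda,\xi}f$, and $f(0)=1$ is clear, so uniqueness forces $f=E_k^{\mathfrak{a}}(R;\lambda,x)$.

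For the full product decomposition into the irreducible pieces $R^i$, I would take
$$g(x)=e^{\braket{\pi_{\mathfrak{c}}\lambda,\pi_{\mathfrak{c}}x}}\prod_{i=1}^{m}E_{k_i}^{\mathfrak{s}_i}(R^i;\pi_{\mathfrak{s}_i}\lambda,\pi_{\mathfrak{s}_i}x)$$
and again verify the eigenvalue identity by Proposition~\ref{DunklOperatorProjection}(i). The key observation is that for $\alpha\in R^j$ the reflection $s_\alpha$ fixes $\pi_{\mathfrak{s}_i}x$ for $i\neq j$ and fixes $\pi_{\mathfrak{c}}x$, while $\pi_{\mathfrak{s}_j}\xi$ is orthogonal to $\mathfrak{c}$ and to every $\mathfrak{s}_i$ with $i\neq j$. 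Therefore the directional derivative and the reflection difference quotients inside $T_{\pi_{\mathfrak{s}_j}\xi}(R^j,k_j)$ act as scalars on every factor except the $j$-th, and Proposition~\ref{DunklOperatorProjection}(iv) applied to the $j$-th factor produces the eigenvalue $\braket{\pi_{\mathfrak{s}_j}\lambda,\pi_{\mathfrak{s}_j}\xi}$. Combining with the $\pi_{\mathfrak{c}}$-contribution yields $T_\xi(R,k)g=\braket{\lambda,\xi}g$, and $g(0)=1$ gives $g=E_k^{\mathfrak{a}}(R;\lambda,x)$ by uniqueness.

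Finally, the statements for $J_k^{\mathfrak{a}}$ follow directly from the decompositions of $E_k^{\mathfrak{a}}$ and $W=\prod_{i=1}^{m}W_i$ with $W_i$ acting trivially on $\mathfrak{c}\oplus\bigoplus_{j\neq i}\mathfrak{s}_j$: averaging the $E_k^{\mathfrak{a}}$-product over $W$ factorizes into independent averages of each $E_{k_i}^{\mathfrak{s}_i}(R^i;\pi_{\mathfrak{s}_i}\lambda,\pi_{\mathfrak{s}_i}\,\cdot)$ over $W_i$, leaving the exponential $e^{\braket{\pi_{\mathfrak{c}}\lambda,\pi_{\mathfrak{c}}x}}$ untouched. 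The only mildly delicate step is the product computation, where one must carefully check that, because the $\mathfrak{s}_i$ are mutually orthogonal and fixed by the $W_j$ for $j\neq i$, the Dunkl operator components $T_{\pi_{\mathfrak{s}_j}\xi}(R^j,k_j)$ really do act on the product $g$ by touching only the $j$-th factor — this is what enables the multiplicative decomposition despite the nonlocal nature of the reflection part of the Dunkl operator.
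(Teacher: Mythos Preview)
Your proposal is correct and follows exactly the approach the paper indicates: the paper merely states that the theorem is proved ``similar to Theorem~\ref{NonSpanningCherednik}'' via Proposition~\ref{DunklOperatorProjection}, and you have carried this out in detail, using the uniqueness of the Dunkl kernel together with parts (i)--(iv) of that proposition and then averaging over $W=\prod_i W_i$ for the Bessel function.
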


\section{Connection to spherical functions of reductive Lie groups}
The aim of this section is to verify that the hypergeometric functions of integral root systems generalize spherical functions on a Riemannian symmetric space $G/K$ with $G$ of the Harish-Chandra class, and a maximal compact subgroup $K$. This connection is already known for semisimple $G$ and crystallographic root systems, cf. \cite{HS94, H97}. We further identify the spherical functions of the associated Riemannian symmetric space $G_0/K$ of Euclidean type, where $G_0$ is the Cartan motion group of $G$, as Dunkl type Bessel functions. For the background on Lie groups of the Harish-Chandra class, their structure theory, and their spherical functions we refer the reader to \cite{GV88}.

\begin{definition}
A (real) Lie group $G$ lies in the Harish-Chandra class (write $G \in \mathcal{H}$) if it satisfies the following four conditions:
\begin{enumerate}[itemsep=5pt, topsep=5pt]
\item[\rm (i)] $G$ has a reductive Lie algebra $\mathfrak{g}$, i.e. $\mathfrak{g}$ decomposes into a direct Lie algebra sum
$$\mathfrak{g}=\mathfrak{c} \oplus \mathfrak{s},$$
with $\mathfrak{c}$ abelian and $\mathfrak{s}$ semisimple. Hence, $\mathfrak{c}=\mathfrak{z}(\mathfrak{g})$ is the center and $\mathfrak{s}=[\mathfrak{g},\mathfrak{g}]$.
\item[\rm (ii)] $G$ has finitely many connected components.
\item[\rm (iii)] $\mathrm{Ad}(G)$ is contained in the connected complex adjoint group of $\mathfrak{g}_\C$, where $\mathrm{Ad}$ is the adjoint representation of $G$.
\item[\rm (iv)] $S\coloneqq \braket{\exp \mathfrak{s}}_{\text{group}}\subseteq G$ has finite center.
\end{enumerate}
\end{definition}

Let $G \in \mathcal{H}$ be a connected Lie group with exponential map $\exp$. By connectedness, $G$ decomposes into
$$G=C\m S \quad \text{ with } \quad C=\exp\mathfrak{c} \quad \text{ and } \quad S=\braket{\exp \mathfrak{s}}_{\text{group}}.$$
Let $K \subseteq G$ be a maximal compact subgroup which is connected due to the connectedness of $G$. From this, one has that $K \cap S \subseteq S$ is a (connected) maximal compact subgroup. In particular, if $\mathfrak{g}=\mathfrak{k}\oplus \mathfrak{p}$ is the associated Cartan decomposition of $\mathfrak{g}$, then $\mathfrak{s}=(\mathfrak{k}\cap \mathfrak{s}) \oplus (\mathfrak{p}\cap \mathfrak{s})$ is a Cartan decomposition of $\mathfrak{s}$. By \cite[Proposition 2.1.12]{GV88} there exists a non-degenerate bilinear form $B$ on $\mathfrak{g}_\C$ such that:
\begin{enumerate}[itemsep=5pt, topsep=5pt]
\item[\rm (i)] $B$ is negative definite on $\mathfrak{k}$ and positive definite on $\mathfrak{p}$.
\item[\rm (ii)] $\mathfrak{k} \perp^B \mathfrak{p}$.
\item[\rm (iii)] $B$ is invariant under $\mathrm{Ad}(G),$ $\mathrm{ad}\,\mathfrak{g}$ and the associated Cartan involution $\theta$.
\item[\rm (iv)] $B$ is real on $\mathfrak{g}\times\mathfrak{g}$.
\item[\rm (v)] $\mathfrak{c} \perp^B \mathfrak{s}$.
\end{enumerate}
Then 
\begin{equation}\label{HCInnerProduct}
\braket{X,Y}\coloneqq -B(X,\theta Y)
\end{equation}
defines an inner product on $\mathfrak{g}$ invariant under $\mathrm{Ad}(K)$, $\theta$ and with the same orthgonality properties as for $B$. The construction of $B$ is the following: on $\mathfrak{s}\times \mathfrak{s}$ it is given by the Cartan-Killing form and on $\mathfrak{c}$ one could choose any non-degenerate form which is negative definite on $\mathfrak{k}\cap \mathfrak{c}$ and positive definite on $\mathfrak{p}\cap \mathfrak{c}$. 

\begin{proposition}\label{RootsRelation}
Choose a maximal abelian subspace $\mathfrak{a} \subseteq \mathfrak{p}$ and the canonical maximal abelian subspace $(\mathfrak{a}\cap \mathfrak{s}) \subseteq (\mathfrak{p}\cap \mathfrak{s})$.
Let $\Sigma \subseteq \mathfrak{a}\cap \mathfrak{s}$ be the restricted roots of $\mathfrak{s}$ with respect to $\mathfrak{a}\cap \mathfrak{s}$ and let $\mathfrak{s}=\mathfrak{s}_0\oplus\bigoplus_{\alpha\in \Sigma} \mathfrak{s}_\alpha$ be the root space decomposition, i.e.
$$\mathfrak{s}_\alpha=\set{X \in \mathfrak{s} \mid [H,X]=\braket{\alpha,H}X \text{ for all } H \in \mathfrak{a}\cap \mathfrak{s}}.$$
Then we have:
\begin{enumerate}[itemsep=5pt, topsep=5pt]
\item[\rm (i)] The restricted roots of $\mathfrak{g}$ with respect to $\mathfrak{a}$ are precisely $\Sigma$.
\item[\rm (ii)] If $\mathfrak{g}=\mathfrak{g}_0\oplus \bigoplus_{\alpha \in \Sigma} \mathfrak{g}_\alpha$ is the root space decomposition of $\mathfrak{g}$ with respect to $\mathfrak{a}$, then 
$$\mathfrak{g}_\alpha=\begin{cases}
\mathfrak{s}_\alpha , & \alpha \in \Sigma, \\
\mathfrak{c}\oplus \mathfrak{s}_0, & \alpha = 0.
\end{cases}$$
\end{enumerate}
\end{proposition}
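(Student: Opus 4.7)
The plan is to exploit the Lie-algebra direct sum $\mathfrak{g}=\mathfrak{c}\oplus\mathfrak{s}$ in which $\mathfrak{c}$ is central and $\mathfrak{s}$ is an ideal, so that $\mathrm{ad}$ respects the decomposition and only the $\mathfrak{s}$-component of any element acts nontrivially. First I would verify the compatible splitting $\mathfrak{a}=(\mathfrak{p}\cap\mathfrak{c})\oplus(\mathfrak{a}\cap\mathfrak{s})$. Since $\mathfrak{p}\cap\mathfrak{c}$ is $B$-orthogonal to $\mathfrak{s}$ by property (v) of the bilinear form and consists of central elements, it commutes with every element of $\mathfrak{p}$; by maximality of $\mathfrak{a}\subseteq\mathfrak{p}$ we must have $\mathfrak{p}\cap\mathfrak{c}\subseteq\mathfrak{a}$. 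Combined with $\mathfrak{c}\cap\mathfrak{s}=0$ this forces $\mathfrak{a}=(\mathfrak{p}\cap\mathfrak{c})\oplus(\mathfrak{a}\cap\mathfrak{s})$ and shows that $\mathfrak{a}\cap\mathfrak{s}$ is maximal abelian in $\mathfrak{p}\cap\mathfrak{s}$, which justifies calling it \emph{the} canonical choice in the statement.

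Given this splitting, for $H\in\mathfrak{a}$ write $H=H_c+H_s$ with $H_c\in\mathfrak{p}\cap\mathfrak{c}$ and $H_s\in\mathfrak{a}\cap\mathfrak{s}$; centrality of $H_c$ gives $\mathrm{ad}(H)=\mathrm{ad}(H_s)$. For $X\in\mathfrak{g}$ decompose $X=X_c+X_s$ along $\mathfrak{c}\oplus\mathfrak{s}$. Then $[H,X]=[H_s,X_s]\in\mathfrak{s}$, so the root equation $[H,X]=\braket{\alpha,H}X$ splits along $\mathfrak{c}\oplus\mathfrak{s}$ into the two conditions
\begin{equation*}
\braket{\alpha,H}X_c=0 \quad\text{and}\quad [H_s,X_s]=\braket{\alpha,H}X_s \qquad\text{for all }H\in\mathfrak{a}.
\end{equation*}
For $\alpha=0$ the first is vacuous and the second puts $X_s\in\mathfrak{s}_0$, with $X_c\in\mathfrak{c}$ unrestricted, giving $\mathfrak{g}_0=\mathfrak{c}\oplus\mathfrak{s}_0$. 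For $\alpha\neq 0$ the first condition forces $X_c=0$; testing the second on $H\in\mathfrak{p}\cap\mathfrak{c}$ (where $H_s=0$) forces $\braket{\alpha,H}=0$, so $\alpha$ lies in the $B$-annihilator of $\mathfrak{p}\cap\mathfrak{c}$, which under the inner-product identification $\mathfrak{a}^*\cong\mathfrak{a}$ means $\alpha\in\mathfrak{a}\cap\mathfrak{s}$. Testing the second condition on $H\in\mathfrak{a}\cap\mathfrak{s}$ then identifies $\alpha$ with a root $\tilde\alpha\in\Sigma$ of $\mathfrak{s}$ w.r.t.\ $\mathfrak{a}\cap\mathfrak{s}$ and $X=X_s$ with an element of $\mathfrak{s}_{\tilde\alpha}$. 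Thus the nonzero roots of $\mathfrak{g}$ coincide with $\Sigma$ and $\mathfrak{g}_\alpha=\mathfrak{s}_\alpha$, proving both (i) and (ii).

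There is no genuine analytic or Lie-theoretic obstacle here; the entire argument is a direct-sum bookkeeping that rests on three structural facts already recorded before the proposition, namely that $\mathfrak{c}$ is central, that $B$ (and hence $\braket{\cdot,\cdot}$) is $\mathfrak{c}\perp\mathfrak{s}$ orthogonal, and that $\mathfrak{p}\cap\mathfrak{c}\subseteq\mathfrak{a}$. The only point requiring a moment of care is the identification of $(\mathfrak{a}\cap\mathfrak{s})^*$ as a subspace of $\mathfrak{a}^*$ via extension by zero on $\mathfrak{p}\cap\mathfrak{c}$, which matches the inner-product identification precisely because of the orthogonality $\mathfrak{c}\perp\mathfrak{s}$.
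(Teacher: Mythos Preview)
Your argument is correct and rests on the same structural facts as the paper's proof, namely that $\mathfrak{c}$ is central and $\mathfrak{g}=\mathfrak{c}\oplus\mathfrak{s}$, so the difference is purely in packaging. The paper first observes the obvious inclusions $\mathfrak{s}_\alpha\subseteq\mathfrak{g}_\alpha$ and $\mathfrak{c}\oplus\mathfrak{s}_0\subseteq\mathfrak{g}_0$, and then compares the two direct-sum decompositions $\mathfrak{g}_0\oplus\bigoplus_{\alpha\in\Sigma'}\mathfrak{g}_\alpha=\mathfrak{g}=(\mathfrak{c}\oplus\mathfrak{s}_0)\oplus\bigoplus_{\alpha\in\Sigma}\mathfrak{s}_\alpha$ to force $\Sigma'=\Sigma$ and equality of the root spaces in one stroke. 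You instead solve the root equation $[H,X]=\braket{\alpha,H}X$ by splitting both $H$ and $X$ along $\mathfrak{c}\oplus\mathfrak{s}$, which directly computes each $\mathfrak{g}_\alpha$. Your route is slightly more explicit and also supplies a verification of $\mathfrak{a}=(\mathfrak{p}\cap\mathfrak{c})\oplus(\mathfrak{a}\cap\mathfrak{s})$ that the paper leaves implicit; the paper's dimension-comparison argument is a bit shorter once that splitting is taken for granted.
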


\begin{proof}
From $\mathfrak{g}=\mathfrak{c}\oplus \mathfrak{s}$ with center $\mathfrak{c}$ it is obvious that any root of $(\mathfrak{s},\mathfrak{a}\cap \mathfrak{s})$ is a root of $(\mathfrak{g},\mathfrak{a})$ and that
$$\mathfrak{g}_\alpha \supseteq \begin{cases}
\mathfrak{s}_\alpha , & \alpha \in \Sigma, \\
\mathfrak{c}\oplus \mathfrak{s}_0, & \alpha = 0.
\end{cases}$$
%We first observe that for $X=X_1+X_2 \in \mathfrak{g}$ with $X_1 \in \mathfrak{c}$ and $X_2 \in \mathfrak{s}$ we have for $H=H_1+H_2 \in \mathfrak{a}$ with $H_1 \in (\mathfrak{a} \cap \mathfrak{c})$ and $H_2 \in (\mathfrak{a}\cap \mathfrak{s})$ that $[H,X]=[H_2,X_2]$. From this we deduce that
%\begin{equation}\label{0RootSpace}
%\mathfrak{g}_0=\mathfrak{c}\oplus \mathfrak{s}_0.
%\end{equation}
%Assume that $\alpha \in \Sigma \subseteq \mathfrak{a}\cap \mathfrak{s}$. Let $X \in \mathfrak{s}_\alpha$, i.e. $\braket{\alpha,H}X=[H,X]$ for all $H \in \mathfrak{a}\cap \mathfrak{s}$. Since $\mathfrak{c}$ is the center of $\mathfrak{g}$ and orthogonal to $\mathfrak{s}$, we observe that indeed for all $H \in \mathfrak{a}$
%$$\braket{\alpha,H}X=[H,X],$$
%which means $X \in \mathfrak{g}_{\alpha}$. Therefore we have proven for all $\alpha \in \Sigma$
%\begin{equation}\label{RootSpaces}
%\mathfrak{s}_\alpha \subseteq \mathfrak{g}_\alpha.
%\end{equation}
Let $\Sigma' \subseteq \mathfrak{a}$ be the roots of $(\mathfrak{g},\mathfrak{a})$, then from the root space decomposition
$$\mathfrak{g}_0 \oplus \bigoplus\limits_{\alpha \in \Sigma'} \mathfrak{g}_\alpha = \mathfrak{g}= \mathfrak{c}\oplus \mathfrak{s}= (\mathfrak{c}\oplus \mathfrak{s}_0) \oplus \bigoplus\limits_{\alpha \in \Sigma} \mathfrak{s}_\alpha$$
we obtain that $\Sigma=\Sigma'$ and $\mathfrak{g}_\alpha=\mathfrak{s}_\alpha$ for $\alpha \in \Sigma$.
\end{proof}
\vspace*{10pt}

Let $\mathfrak{g}=\mathfrak{k}\oplus \mathfrak{a}\oplus \mathfrak{n}$ and $G=KAN$ be Iwasawa decompositions. To be more precise, we choose a system of positive roots $\Sigma_+ \subseteq \Sigma$ and put $\mathfrak{n}=\bigoplus_{\alpha \in \Sigma_+} \mathfrak{g}_\alpha$, $A=\exp\mathfrak{a}$ and $N=\exp\mathfrak{n}$. From Proposition \ref{RootsRelation} we observe that in this case
$$\mathfrak{s}=(\mathfrak{k}\cap \mathfrak{s}) \oplus (\mathfrak{a}\cap \mathfrak{s}) \oplus \mathfrak{n} \quad \text{ and } \quad S=(K\cap S)(A \cap S) N$$
are Iwasawa decompositions for $\mathfrak{s}$ and $S$, respectively. We denote by $H^G:G \to \mathfrak{a}$ the Iwasawa projection defined in terms of the Iwasawa decomposition by $H^G(kan)=\log \, a$, where $\log$ is the inverse of the diffeomorphism $\exp:\mathfrak{a} \to A$. For $S$ we define in the same manner the Iwasawa projection $H^S:S \to (\mathfrak{a}\cap \mathfrak{s})$. Moreover, let $W$ be the Weyl group associated with the roots $\Sigma$. From $\mathfrak{s}\perp \mathfrak{c}$ and $\Sigma \subseteq \mathfrak{a} \cap \mathfrak{s}$, we obtain that $W$ acts trivially on $\mathfrak{a}\cap \mathfrak{c}$.\\
With the inner product \eqref{HCInnerProduct} the spherical functions of the Gelfand pair $(G,K)$ are given by
\begin{equation}\label{HarishChandraIntegral}
\varphi_\lambda^G(g)=\int_{K} e^{\braket{\lambda-\rho,H^G(gk)}}\;\mathrm{d} k, \quad \lambda \in \mathfrak{a}_\C,\; g \in G
\end{equation}
with $\rho=\frac{1}{2}\sum_{\alpha \in \Sigma_+}\dim(\mathfrak{g}_\alpha)\alpha$ and $\varphi_\lambda=\psi_\mu$ if and only if $\lambda \in W.\mu$, cf. \cite[Proposition 3.2.1]{GV88}. Moreover, $\varphi_\lambda$ is the unique $K$-biinvariant smooth function on $G$ that satisfies
$$D\varphi_\lambda=\gamma(D)(\lambda)\varphi_\lambda, \quad \varphi_\lambda(e)=1$$
for all $G$-invariant differential operators $D \in \D(G/K)$ and with the Harish-Chandra isomorphism 
$$\gamma:\D(G/K) \to \C[\mathfrak{a}]^W.$$ 
The spherical functions of the Gelfand pair $(S,K\cap S)$ are similarly given by
$$\varphi_\lambda^S(s)=\int_{K\cap S} e^{\braket{\lambda-\rho,H^S(sk)}}\;\mathrm{d} k, \quad \lambda \in (\mathfrak{a}\cap \mathfrak{s})_\C,\; s \in G.$$

\begin{lemma}\label{DecompositionSphericalFunction}
Let $\pi_\mathfrak{s}$ and $\pi_{\mathfrak{c}}$ be the orthogonal projections of $\mathfrak{a}$ onto $\mathfrak{s}$ and $\mathfrak{c}$, respectively. We extend the projections in a $\C$-bilinear way to the complexifications of the spaces. Then the spherical functions of $(G,K)$ and $(S,K \cap S)$, considered as $W$-invariant functions on $\mathfrak{a}\cong A$ and $\mathfrak{a}\cap \mathfrak{s} \cong A\cap S$ by restriction, respectively, are related by
$$\varphi_\lambda^G(x)= e^{\braket{\pi_{\mathfrak{c}}\lambda,\pi_{\mathfrak{c}}x}}\varphi^S_{\pi_{\mathfrak{s}}\lambda}(\pi_{\mathfrak{s}}x)$$
for all $\lambda \in \mathfrak{a}_\C$ and $x \in \mathfrak{a}$. 
\end{lemma}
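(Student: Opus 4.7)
The plan is to use the Harish--Chandra integral formula \eqref{HarishChandraIntegral} directly and exploit the product structure coming from the reductive decomposition $\mathfrak{g}=\mathfrak{c}\oplus\mathfrak{s}$. Write $g=\exp x$ with $x\in\mathfrak{a}$. Since $\mathfrak{c}\cap\mathfrak{p}\subseteq\mathfrak{a}$ (as it is abelian, contained in $\mathfrak{p}$, and commutes with $\mathfrak{a}$ which is maximal abelian in $\mathfrak{p}$) and since the decomposition $\mathfrak{g}=\mathfrak{c}\oplus\mathfrak{s}$ is $B$-orthogonal, the subspaces $\mathfrak{a}\cap\mathfrak{c}$ and $\mathfrak{a}\cap\mathfrak{s}$ give an orthogonal decomposition of $\mathfrak{a}$ along which $\pi_{\mathfrak{c}}$ and $\pi_{\mathfrak{s}}$ act. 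Consequently $\exp x=\exp(\pi_{\mathfrak{c}}x)\exp(\pi_{\mathfrak{s}}x)$ with the two factors commuting, and $\exp(\pi_{\mathfrak{c}}x)\in A$ is central in $G$.

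Next, I would establish $K=(K\cap C)(K\cap S)$: the Cartan decomposition respects the reductive splitting, $\mathfrak{k}=(\mathfrak{k}\cap\mathfrak{c})\oplus(\mathfrak{k}\cap\mathfrak{s})$, and since $K\cap C$ is central in $K$ the product $(K\cap C)(K\cap S)$ is a subgroup of $K$ containing $\exp\mathfrak{k}$, hence equal to $K$ by connectedness. Writing an element $k\in K$ as $k=k_c k_s$ with $k_c\in K\cap C\subseteq C$ and $k_s\in K\cap S$, centrality of $C$ gives
\[
gk \;=\; \exp(\pi_{\mathfrak{c}}x)\,k_c\,\bigl(\exp(\pi_{\mathfrak{s}}x)\,k_s\bigr).
\]
Apply the Iwasawa decomposition in $S$ to $\exp(\pi_{\mathfrak{s}}x)k_s=k_1 a_1 n_1$ with $a_1\in A\cap S$, $n_1\in N$; since $\exp(\pi_{\mathfrak{c}}x)$ is central it can be slid past $k_1$, giving a factorisation of $gk$ in $KAN$ with $K$-part $k_c k_1$, $A$-part $\exp(\pi_{\mathfrak{c}}x)\,a_1$, and $N$-part $n_1$. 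Therefore
\[
H^G(gk) \;=\; \pi_{\mathfrak{c}}x \;+\; H^S\bigl(\exp(\pi_{\mathfrak{s}}x)\,k_s\bigr).
\]

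Now I would plug this into \eqref{HarishChandraIntegral}. Since $\rho\in\mathfrak{a}\cap\mathfrak{s}$ (each $\alpha\in\Sigma$ lies in $\mathfrak{a}\cap\mathfrak{s}$ by Proposition \ref{RootsRelation}) and $\pi_{\mathfrak{c}}x\in\mathfrak{c}$, the orthogonality $\mathfrak{c}\perp\mathfrak{s}$ splits the pairing as
\[
\braket{\lambda-\rho,H^G(gk)}
\;=\;\braket{\pi_{\mathfrak{c}}\lambda,\pi_{\mathfrak{c}}x}
\;+\;\braket{\pi_{\mathfrak{s}}\lambda-\rho,H^S(\exp(\pi_{\mathfrak{s}}x)\,k_s)}.
\]
The first summand pulls out of the $K$-integral. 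Using the product decomposition of the Haar measure on $K$ corresponding to $K=(K\cap C)(K\cap S)$ (with both factors normalized to unit mass, which is consistent since $e\cdot e$ integrates to $1$), the integration over $K\cap C$ is trivial because the remaining integrand is independent of $k_c$, and the integration over $K\cap S$ recovers exactly $\varphi^S_{\pi_{\mathfrak{s}}\lambda}(\exp(\pi_{\mathfrak{s}}x))$. This yields the claimed identity.

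The main potential obstacle is the bookkeeping around the decomposition $K=(K\cap C)(K\cap S)$ and the correct normalisation of Haar measures, together with verifying that $K\cap C$ really is central in $K$ so that the product is a subgroup; once these structural facts are in place, the computation of $H^G(gk)$ and the orthogonal splitting of the exponent are essentially formal consequences of $\mathfrak{g}=\mathfrak{c}\oplus\mathfrak{s}$ being both a Lie algebra direct sum (with $\mathfrak{c}$ central) and a $B$-orthogonal sum.
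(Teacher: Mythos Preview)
Your argument is correct and follows essentially the same approach as the paper: both use the decomposition $K=(K\cap C)(K\cap S)$ (derived from $\mathfrak{k}=(\mathfrak{k}\cap\mathfrak{c})\oplus(\mathfrak{k}\cap\mathfrak{s})$ and connectedness), the centrality of $C$ to relate $H^G$ to $H^S$, the fact that $\rho\in\mathfrak{s}\perp\mathfrak{c}$, and a Haar-measure/Weyl-integration reduction of the $K$-integral to a $(K\cap S)$-integral. The only difference is organizational---the paper splits the computation into two steps (first peel off $\pi_{\mathfrak{c}}x$, then for $x\in\mathfrak{a}\cap\mathfrak{s}$ reduce to $\varphi^S$), whereas you compute $H^G(gk)=\pi_{\mathfrak{c}}x+H^S(\exp(\pi_{\mathfrak{s}}x)k_s)$ in a single pass.
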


\begin{proof}
For $x=x_c+x_s \in \mathfrak{a}$ with $x_c \in \mathfrak{a}\cap \mathfrak{c}$ and $x_s \in \mathfrak{a}\cap \mathfrak{s}$ we have
$$\exp x =\exp(x_c)\exp(x_s)=a_ca_s$$
with $a_c=\exp x_c \in A\cap C$ and $a_s=\exp x_s \in A \cap S$. The assertion holds by the following:
\begin{enumerate}[itemsep=5pt, topsep=5pt]
\item From $a_c \in A\cap C$ we obtain $H^G(ak)=\log a_c+H^G(a_sk)=H^G(a_sk)+x_c$. Thus, with $\rho \in \mathfrak{s} \perp \mathfrak{c}$ we have
$$\varphi^G_\lambda(x)=e^{\braket{\lambda-\rho,x_c}}\varphi_\lambda^G(x_s) = e^{\braket{\pi_{\mathfrak{c}}\lambda,x_c}}\varphi_\lambda^G(x_s)=e^{\braket{\pi_{\mathfrak{c}}\lambda,\pi_{\mathfrak{c}}x}}\varphi_\lambda^G(x_s).$$
\item Since $\mathfrak{k}=(\mathfrak{k}\cap \mathfrak{c})\oplus (\mathfrak{k}\cap \mathfrak{s})$, the connectedness of $K$ shows that $K=(K\cap C)\m (K\cap S).$ Hence, the quotient space $K/(K\cap S)$ can be identified with a subset of $K\cap C$ and by Weyl's integration formula there exists a unique probability measure $\mathrm{d} c$ on $K/(K\cap S)$ such that for all $a \in A \cap S$
\begin{equation}\label{WeylIntegration}
\int_K e^{\braket{\lambda-\rho,H^G(ak)}}\;\mathrm{d} k = \int_{K/(K\cap S)} \int_{K\cap S} e^{\braket{\lambda-\rho,H^G(acs)}} \;\mathrm{d} s \;\mathrm{d} c.
\end{equation}
But for $c \in K\cap C$, $a \in A\cap S$ and $s \in K\cap S$ we observe that $H^G(acs)=H^G(as)=H^S(as)$. Therefore, by equation \eqref{WeylIntegration} and $\rho \in \mathfrak{s}\perp \mathfrak{c}$ we have for $x=\log a \in \mathfrak{a}\cap \mathfrak{s}$
$$\varphi_\lambda^G(x)=\int_{K\cap S}e^{\braket{\lambda-\rho, H^S(as)}} \;\mathrm{d} s = \int_{K\cap S}e^{\braket{\pi_{\mathfrak{s}}\lambda-\rho, H^S(as)}} \;\mathrm{d} s=\varphi_{\pi_{\mathfrak{s}}\lambda}^S(x).$$
\end{enumerate}
\end{proof}
\vspace*{10pt}

The following theorem shows that the generalized hypergeometric functions for integral root systems, generalizes the spherical functions of Riemannian symmetric spaces related to Gelfand pairs of reductive Lie groups from the Harish-Chandra class.

\begin{theorem}\label{ReductiveSphericalHypergeometric}
The spherical functions of $(G,K)$ are related to the hypergeometric function $F_k(R;\m,\m)$ on $\mathfrak{a}$ associated with $R=2\Sigma \subseteq \mathfrak{a}$ and $k_{2\alpha}=\frac{\dim \mathfrak{g}_\alpha}{2}=\frac{\dim \mathfrak{s}_\alpha}{2}$ by the following formula for all $\lambda \in \mathfrak{a}_\C$, $x \in \mathfrak{a}$
$$\varphi_\lambda^G(x)=F_k(R;\lambda,x).$$
\end{theorem}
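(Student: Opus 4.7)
The plan is to reduce the statement to the already-known semisimple case by matching two parallel decompositions: one of the spherical function $\varphi_\lambda^G$ into a central exponential factor times a spherical function on $S$, and one of the hypergeometric function $F_k$ on $\mathfrak{a}$ into a central exponential factor times a hypergeometric function on $\mathfrak{s}$. Concretely, I would combine Lemma~\ref{DecompositionSphericalFunction} with Theorem~\ref{NonSpanningCherednik} to write
\begin{align*}
\varphi_\lambda^G(x) &= e^{\braket{\pi_{\mathfrak{c}}\lambda,\pi_{\mathfrak{c}}x}}\,\varphi^S_{\pi_{\mathfrak{s}}\lambda}(\pi_{\mathfrak{s}}x), \\
F_k(R;\lambda,x) &= e^{\braket{\pi_{\mathfrak{c}}\lambda,\pi_{\mathfrak{c}}x}}\,F_k^{\mathfrak{s}}(R;\pi_{\mathfrak{s}}\lambda,\pi_{\mathfrak{s}}x).
\end{align*}
Since the central prefactors agree, the theorem will follow once the identity $\varphi^S_{\mu}(y)=F_k^{\mathfrak{s}}(R;\mu,y)$ is established for all $\mu \in (\mathfrak{a}\cap\mathfrak{s})_\C$ and $y \in \mathfrak{a}\cap\mathfrak{s}$.

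Next I would invoke the classical Heckman--Opdam identification of spherical functions on a Riemannian symmetric space of non-compact type with hypergeometric functions for the associated crystallographic root system, as recalled in equation~\eqref{SemisimpleHypGeoSpherical} of the introduction and proved in the references \cite{HS94, H97}. Since $S$ is semisimple with maximal compact subgroup $K\cap S$, and since by Proposition~\ref{RootsRelation} the restricted root system of $(\mathfrak{s},\mathfrak{a}\cap\mathfrak{s})$ is exactly $\Sigma$ with the same root spaces $\mathfrak{s}_\alpha=\mathfrak{g}_\alpha$, the crystallographic root system $R=2\Sigma$ and the multiplicity $k_{2\alpha}=\dim\mathfrak{g}_\alpha/2=\dim\mathfrak{s}_\alpha/2$ are precisely the data that the cited theorem takes as input. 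Applying it yields the required semisimple identity $\varphi^S_{\mu}=F_k^{\mathfrak{s}}(R;\mu,\cdot)$.

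The only bookkeeping step is to check that the Dunkl $\rho(k)$ appearing in the definition of $D_\xi(R_+,k)$ matches the Harish-Chandra $\rho=\tfrac12\sum_{\alpha\in\Sigma_+}\dim\mathfrak{g}_\alpha\,\alpha$ used in the integral formula \eqref{HarishChandraIntegral}. This follows from the chosen parametrization: with $\beta=2\alpha\in R_+$ and $k_\beta=\dim\mathfrak{g}_\alpha/2$,
\[
\rho(k)=\tfrac12\sum_{\beta\in R_+}k_\beta\,\beta=\tfrac12\sum_{\alpha\in\Sigma_+}\tfrac{\dim\mathfrak{g}_\alpha}{2}\cdot 2\alpha=\rho,
\]
so the eigenvalue equations and normalizations on the two sides truly coincide.

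The main potential obstacle is less a mathematical difficulty than a matter of compatibility: one must verify that the two decompositions used above refer to the same orthogonal splitting $\mathfrak{a}=(\mathfrak{a}\cap\mathfrak{c})\oplus(\mathfrak{a}\cap\mathfrak{s})$ and that the projections $\pi_{\mathfrak{s}}$, $\pi_{\mathfrak{c}}$ appearing in Lemma~\ref{DecompositionSphericalFunction} (derived from the $B$-orthogonality $\mathfrak{c}\perp^B\mathfrak{s}$ on the Lie algebra side) agree with those in Theorem~\ref{NonSpanningCherednik} (derived from the abstract decomposition $\mathfrak{a}=\mathrm{span}_{\R}R\oplus(\mathrm{span}_{\R}R)^\perp$). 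This is immediate from $R=2\Sigma\subseteq\mathfrak{a}\cap\mathfrak{s}$ together with Proposition~\ref{RootsRelation}(i), which guarantees that $\mathrm{span}_{\R}R=\mathfrak{a}\cap\mathfrak{s}$, so both splittings coincide and the proof reduces to a clean concatenation of the two decompositions with the classical semisimple identification.
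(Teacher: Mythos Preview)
Your proposal is correct and follows exactly the same strategy as the paper's proof: combine Lemma~\ref{DecompositionSphericalFunction} and Theorem~\ref{NonSpanningCherednik} to reduce both sides to their $\mathfrak{s}$-components, then invoke the classical semisimple identification \eqref{SemisimpleHypGeoSpherical} from \cite{HS94}. The paper states this in one sentence; your version simply makes explicit the bookkeeping (matching of $\rho$-shifts and of the orthogonal splittings) that the paper leaves implicit.
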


\begin{proof}
By the relation between the spherical functions of Riemannian symmetric spaces of non-compact type and the hypergeometric functions of crystallographic root systems \eqref{SemisimpleHypGeoSpherical} from \cite[Theorem 5.2.2]{HS94}, the assertion is an immediate consequence of Lemma \ref{DecompositionSphericalFunction} and Theorem \ref{NonSpanningCherednik}.
\end{proof}

Let $G_0\coloneqq K \ltimes \mathfrak{p}$ and $S_0\coloneqq (K\cap S)\ltimes (\mathfrak{p} \cap \mathfrak{s})$ be the Cartan motion groups associated with $(G,K)$ and $(S,K\cap S)$, where $K$ and $K\cap S$ act on $\mathfrak{p}$ and $(\mathfrak{p}\cap \mathfrak{s})$ by the adjoint representation, respectively.
We will identify the spherical functions of $(G_0,K)$ and $(S_0,K\cap S)$ as Dunkl type Bessel functions.

\begin{lemma}\label{SphericalFunctionsBessel} 
\
\begin{enumerate}[itemsep=5pt, topsep=5pt]
\item[\rm (i)] The spherical functions of $(S_0,K\cap S)$, considered as $W$-invariant functions on $\mathfrak{a}\cap \mathfrak{s}$, are
$$\psi_{\lambda}^{S_0}(x)=\int_{K\cap S} e^{\braket{\lambda,k x}} \;\mathrm{d} k, \quad \lambda \in (\mathfrak{a}\cap \mathfrak{s})_\C.$$
For every $(K\cap S)$-invariant differential operator $p(\partial)$ on $\mathfrak{p}\cap \mathfrak{s}$ with $p \in \C[\mathfrak{p}\cap\mathfrak{s}]^{K\cap S}$ we have $p(\partial)\psi^{S_0}_\lambda=p(\lambda)\psi_\lambda^{S_0}$.
\item[\rm (ii)] The spherical functions of $(G_0,K)$, considered as $W$-invariant function on $\mathfrak{a}$, are
$$\psi_{\lambda}^{G_0}(x)=\int_{K} e^{\braket{\lambda,k x}} \;\mathrm{d} k. \quad \lambda \in \mathfrak{a}_\C$$
satisfying $p(\partial)\psi_\lambda^{G_0}=p(\lambda)\psi_\lambda^{G_0}$ for all $p \in \C[\mathfrak{p}]^{K}$. Moreover, for $x \in \mathfrak{a}$ and $\lambda \in \mathfrak{a}_\C$ we have
$$\psi_\lambda^{G_0}(x)=e^{\braket{\pi_{\mathfrak{c}}\lambda,\pi_{\mathfrak{c}}x}}\psi_{\pi_{\mathfrak{s}}\lambda}^{S_0}(\pi_{\mathfrak{s}}x).$$
\end{enumerate}
\end{lemma}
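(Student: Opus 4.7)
The plan is to proceed in close analogy to the proof of Lemma \ref{DecompositionSphericalFunction}, using that $(S_0,K\cap S)$ and $(G_0,K)$ are Euclidean Gelfand pairs. For part (i), I would invoke the classical identification of spherical functions for Cartan motion groups: the $(K\cap S)$-biinvariant spherical functions of $(K \cap S)\ltimes (\mathfrak{p}\cap\mathfrak{s})$ descend to $(K\cap S)$-invariant joint eigenfunctions of $\C[\mathfrak{p}\cap\mathfrak{s}]^{K\cap S}$ on $\mathfrak{p}\cap\mathfrak{s}$, and Harish-Chandra's formula in the Euclidean case expresses them as the orbital integrals $x\mapsto \int_{K\cap S} e^{\braket{\lambda,kx}}\,\mathrm{d}k$ for $\lambda \in (\mathfrak{a}\cap\mathfrak{s})_\C$. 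Restriction to the maximal flat $\mathfrak{a}\cap\mathfrak{s}$ then yields $W$-invariant functions as asserted, since every $(K\cap S)$-orbit in $\mathfrak{p}\cap\mathfrak{s}$ meets $\mathfrak{a}\cap\mathfrak{s}$ and the $(K\cap S)$-orbits in $\mathfrak{a}\cap\mathfrak{s}$ coincide with the $W$-orbits.

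The eigenvalue relation $p(\partial)\psi_\lambda^{S_0} = p(\lambda)\psi_\lambda^{S_0}$ for $p \in \C[\mathfrak{p}\cap\mathfrak{s}]^{K\cap S}$ comes from differentiating under the integral sign. Because $K\cap S$ acts orthogonally on $\mathfrak{p}\cap\mathfrak{s}$, one has $p(\partial_x)\,e^{\braket{\lambda,kx}} = p(k^{-1}\lambda)\,e^{\braket{\lambda,kx}} = p(\lambda)\,e^{\braket{\lambda,kx}}$, the last equality by $(K\cap S)$-invariance of $p$; integrating over $K\cap S$ yields the claim. The corresponding statement in (ii) for $p \in \C[\mathfrak{p}]^K$ is verified identically, noting that since $K\cap C$ acts trivially on $\mathfrak{p}$, the $K$-invariance of $p$ on $\mathfrak{p}$ amounts to $(K\cap S)$-invariance on the $\mathfrak{p}\cap\mathfrak{s}$-summand combined with arbitrary polynomial dependence on $\mathfrak{p}\cap\mathfrak{c}$.

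The factorization $\psi_\lambda^{G_0}(x) = e^{\braket{\pi_{\mathfrak{c}}\lambda,\pi_{\mathfrak{c}}x}}\,\psi_{\pi_{\mathfrak{s}}\lambda}^{S_0}(\pi_{\mathfrak{s}}x)$ in (ii) is the Cartan-motion analogue of Lemma \ref{DecompositionSphericalFunction}, and this is where the central/semisimple splitting does the real work. Since $\mathfrak{c}$ is central in $\mathfrak{g}$, $\mathrm{Ad}(K)$ fixes $\mathfrak{c}$ pointwise, so for $k \in K$ and $x = \pi_{\mathfrak{c}}x + \pi_{\mathfrak{s}}x$ one has $kx = \pi_{\mathfrak{c}}x + k(\pi_{\mathfrak{s}}x)$. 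Moreover $K = (K\cap C)(K\cap S)$ by connectedness, exactly as in Lemma \ref{DecompositionSphericalFunction}, and $K\cap C \subseteq C$ acts trivially on all of $\mathfrak{p}$, so the $K$-integral reduces to an integral over $K\cap S$. Splitting the pairing via $\mathfrak{c}\perp\mathfrak{s}$ makes the integrand factorize as $e^{\braket{\pi_{\mathfrak{c}}\lambda,\pi_{\mathfrak{c}}x}}\,e^{\braket{\pi_{\mathfrak{s}}\lambda,k\pi_{\mathfrak{s}}x}}$, and the claim drops out. The only genuine obstacle is the input from the first paragraph: once the Cartan motion-group spherical-function formula for $\psi_\lambda^{S_0}$ is granted, the rest is algebraic bookkeeping parallel to Lemma \ref{DecompositionSphericalFunction}.
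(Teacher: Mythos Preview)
Your proposal is correct and follows essentially the same route as the paper: part (i) is delegated to the classical result for Cartan motion groups (the paper cites \cite[Proposition 4.8]{H84}), and the factorization in (ii) is obtained by exactly the same mechanism as in Lemma \ref{DecompositionSphericalFunction}, namely using $\mathrm{Ad}(K)|_{\mathfrak{c}}=\mathrm{id}$, the decomposition $K=(K\cap C)(K\cap S)$, and the orthogonality $\mathfrak{c}\perp\mathfrak{s}$ to reduce the $K$-integral to a $(K\cap S)$-integral. The only additions on your side are the explicit computation of the eigenvalue relation by differentiation under the integral, which the paper absorbs into its citation, and doing the two reductions (peeling off the $\mathfrak{c}$-factor and passing from $K$ to $K\cap S$) in one step rather than two; these are cosmetic.
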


\begin{proof}
\
\begin{enumerate}[itemsep=5pt, topsep=5pt]
\item This can be found in \cite[Proposition 4.8]{H84}.
\item The integral representation of the spherical functions has the same proof as in the case where $G$ is semisimple, cf. \cite[Theorem 3.2.3]{GV88}. Consider $x=x_c+x_s$ with $x_c \in \mathfrak{a}\cap \mathfrak{c}$ and $x_s\in \mathfrak{a}\cap \mathfrak{s}$. Then, as for all $k \in K$ we have $kx_c=x_c$ and $\mathfrak{s}\perp \mathfrak{c}$, we observe
$$\psi_\lambda^{G_0}(x)=e^{\braket{\lambda,x_c}}\psi_\lambda^{G_0}(x_s)=e^{\braket{\pi_{\mathfrak{c}}\lambda,\pi_{\mathfrak{c}}x}}\psi_\lambda^{G_0}(\pi_{\mathfrak{s}}x).$$
Finally, for $x \in \mathfrak{a}\cap \mathfrak{s}$ and $c \in K/(K\cap S)\subseteq K\cap C$ we have $cx=x$, as $C$ is contained in the kernel of the adjoint representation. Thus, as in the proof of Lemma \ref{DecompositionSphericalFunction} we obtain with $\mathfrak{s}\perp \mathfrak{c}$
$$\psi_\lambda^{G_0}(x)=\int_{K/(K\cap S)}\int_{K\cap S}e^{\braket{\lambda,kcx}} \;\mathrm{d} k \;\mathrm{d} c = \int_{K\cap S} e^{\braket{\pi_{\mathfrak{s}}\lambda,sx}} \;\mathrm{d} s=\psi_{\pi_{\mathfrak{s}}\lambda}^{S_0}(x).$$
\end{enumerate}
\end{proof}

\begin{theorem}
Let $(R,k)$ be as in Theorem \ref{ReductiveSphericalHypergeometric} and $J_k(R;\m,\m)$ the associated Bessel function on $\mathfrak{a}$. Then, the spherical functions of $(G_0,K)$ are given as $W$-invariant function on $\mathfrak{a}$ by
$$\psi_\lambda^{G_0}(x)=J_k(R;\lambda,x).$$
for all $\lambda\in\mathfrak{a}_\C$ and $x \in \mathfrak{a}$.
\end{theorem}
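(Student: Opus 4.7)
The plan is to reduce the claim to the already known semisimple case via two compatible decompositions. By Lemma \ref{SphericalFunctionsBessel}(ii) we have
$$\psi_\lambda^{G_0}(x) = e^{\braket{\pi_{\mathfrak{c}}\lambda,\pi_{\mathfrak{c}}x}}\,\psi_{\pi_{\mathfrak{s}}\lambda}^{S_0}(\pi_{\mathfrak{s}}x),$$
while Theorem \ref{NonSpanningDunkl} yields the parallel decomposition
$$J_k^{\mathfrak{a}}(R;\lambda,x) = e^{\braket{\pi_{\mathfrak{c}}\lambda,\pi_{\mathfrak{c}}x}}\,J_k^{\mathfrak{s}}(R;\pi_{\mathfrak{s}}\lambda,\pi_{\mathfrak{s}}x).$$
Comparing the two expressions, it is enough to establish the identity $\psi_\mu^{S_0}(y)=J_k^{\mathfrak{s}}(R;\mu,y)$ for the semisimple group $S$ and the crystallographic root system $R\subseteq \mathfrak{a}\cap\mathfrak{s}$, with $\mu\in(\mathfrak{a}\cap\mathfrak{s})_\C$ and $y\in\mathfrak{a}\cap\mathfrak{s}$.

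For this reduced semisimple assertion I would argue by characterizing both sides as joint eigenfunctions of a common commutative algebra. By Lemma \ref{SphericalFunctionsBessel}(i), $\psi_\mu^{S_0}$ is the unique $(K\cap S)$-invariant smooth function on $\mathfrak{p}\cap\mathfrak{s}$ with value $1$ at the origin satisfying $p(\partial)\psi_\mu^{S_0}=p(\mu)\psi_\mu^{S_0}$ for all $p\in\C[\mathfrak{p}\cap\mathfrak{s}]^{K\cap S}$. Its restriction to $\mathfrak{a}\cap\mathfrak{s}$ is $W$-invariant, and by the Chevalley restriction isomorphism $\C[\mathfrak{p}\cap\mathfrak{s}]^{K\cap S}\to \C[\mathfrak{a}\cap\mathfrak{s}]^W$ it is the unique such function with eigenvalue $p(\mu)$ for every $W$-invariant constant coefficient operator. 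On the other hand, with the Dunkl multiplicity $k_{2\alpha}=\tfrac{1}{2}\dim\mathfrak{s}_\alpha$ the rational Dunkl operators $T_\xi(R,k)$ restrict on $W$-invariant functions to precisely this algebra of invariant differential operators, so $J_k^{\mathfrak{s}}(R;\mu,\cdot)$ is a $W$-invariant eigenfunction of the same system with the same eigenvalue $\mu$ and value $1$ at the origin. Uniqueness then forces the two functions to coincide. This is the standard identification (going back to de Jeu and Opdam) of Cartan motion group spherical functions with rational Dunkl-Bessel functions; see \cite{R03} for an overview.

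The only point of genuine care — rather than a real obstacle — is the root-theoretic bookkeeping needed to identify the Dunkl data attached to $(G,K)$ with that attached to $(S,K\cap S)$. This is provided by Proposition \ref{RootsRelation}, which identifies the restricted roots of $\mathfrak{g}$ with those of $\mathfrak{s}$ and gives $\dim\mathfrak{g}_\alpha=\dim\mathfrak{s}_\alpha$ for $\alpha\in\Sigma$; consequently the multiplicity $k_{2\alpha}=\tfrac{1}{2}\dim\mathfrak{g}_\alpha$ from Theorem \ref{ReductiveSphericalHypergeometric} agrees with the one appearing in the semisimple identification. Once this compatibility is verified, the full statement assembles cleanly from the two decompositions above together with the semisimple result, in complete analogy with the passage from Theorem \ref{NonSpanningCherednik} to Theorem \ref{ReductiveSphericalHypergeometric}.
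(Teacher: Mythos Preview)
Your proposal is correct and follows essentially the same route as the paper: reduce to the crystallographic/semisimple case via the matching product decompositions of Lemma \ref{SphericalFunctionsBessel}(ii) and Theorem \ref{NonSpanningDunkl}, then invoke the known identification due to de Jeu \cite{dJ06}. The paper simply cites \cite{dJ06} for the semisimple step, whereas you additionally sketch its mechanism; one small wording slip there---the radial parts of the $K$-invariant constant-coefficient operators on $\mathfrak{p}\cap\mathfrak{s}$ are not themselves constant-coefficient on $\mathfrak{a}\cap\mathfrak{s}$, but rather agree with $p(T(k))$ on $W$-invariants, which is precisely what de Jeu's result provides.
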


\begin{proof}
The theorem was proven by de Jeu in \cite{dJ06} in the case $\mathfrak{a}=\mathrm{span}_\R R$. The general case follows from the product decomposition of the Bessel function in Theorem \ref{NonSpanningDunkl} and Lemma \ref{SphericalFunctionsBessel} (ii).
\end{proof}

\begin{remark}
All the above results are still true if $G \in \mathcal{H}$ is not connected. As a maximal compact subgroup $K \subseteq G$ meets every connected component of $G$, cf. \cite[Proposition 2.1.7]{GV88}, the spherical functions of $(G,K)$ and $(G_e,K_e)$ ($G_e,K_e$ denoting the connected components of the unit $e$) can be identified.
\end{remark}

\section{Recurrence relations for the Cherednik operators}
In \cite{S00a,S00b} Sahi has obtained recurrence formulas for non-symmetric Heckman-Opdam polynomials via identities for the action of the dual affine Weyl group on Cherednik operators. These identities follow immediately from certain relations in a graded Hecke algebra, which has a representation in terms of Cherednik operators, see for instance \cite[last line p.79 and Corollary 2.9]{O95}. \\
For this section, assume that $R$ is a crystallographic irreducible root system inside the Euclidean space $(\mathfrak{a},\braket{\m,\m})$. Fix a system of positive roots $R_+\subseteq R$ with associated simple roots $\alpha_1,\ldots,\alpha_n$ and a multiplicity function $k:R \to \C, \, \alpha\mapsto k_\alpha$. Moreover, let $W$ be the Weyl group of $R$ and $s_i=s_{\alpha_i}$ the associated simple reflections. Consider the weight lattice
$$P\coloneqq \set{\lambda \in P \mid \braket{\lambda,\alpha^\vee} \in \Z \text{ for all } \alpha \in R}.$$
and the cone of dominant weights
$$P_+\coloneqq \set{\lambda \in P \mid \braket{\lambda,\alpha^\vee} \in \N_0 \text{ for all } \alpha \in R_+}.$$
Since $R$ is irreducible, there exists a unique highest short root denoted by $\beta$. We define the affine reflection
$$s_0:\mathfrak{a}\to \mathfrak{a}, \quad x\mapsto \beta+s_\beta x,$$
which is the affine reflection in the hyperplane $\set{x \in \mathfrak{a} \mid \braket{x,\beta^\vee}=1}$. The dual affine Weyl group can be expressed in terms of the root lattice $Q=\set{\sum_{i=1}^n m_i\alpha_i \mid m_i \in \Z}$ and in terms of $s_0,\ldots,s_n$ via
$$W^{\vee,\mathrm{aff}}=\braket{s_0,\ldots,s_n}_{\text{group}} \cong W\ltimes Q,$$
where $Q$ acts on $\mathfrak{a}$ via translations. The orbit space $W^{\vee,\mathrm{aff}}\backslash P$ has the following set of representatives
$$\mathcal{O}\coloneqq \set{\lambda \in P \mid \braket{\lambda,\alpha^\vee}\in\set{0,1} \text{ for all } \alpha \in R_+},$$
called the minuscle weights, cf. \cite{H90}. 

\begin{definition}
Consider on the dominant weights $P_+$ the partial order,
$$\mu \le \lambda \;\text{ iff } \;\lambda-\mu \in Q_+\coloneqq \sum_{i=1}^n \N_0 \alpha_i,$$
called dominance order. We extend the dominance order to $P$ via
$$\mu \trianglelefteq \lambda \; \text{ iff }\; \begin{cases}
\mu_+\le \lambda_+, &\text{if } \mu_+\neq \lambda_+, \\
\hspace{7pt}\lambda \le \mu,&\text{if } \mu_+=\lambda_+,
\end{cases}$$
where $\mu_+ \in W.\mu \cap P_+$ is the unique dominant weight in the $W$-orbit of $\mu$.
\end{definition}

We recall the notion of non-symmetric Heckman-Opdam polynomials (also called non-symmetric Jacobi-polynomials) $(E_\lambda(k;\m))_{\lambda \in P}$, introduced in \cite[Section 2]{O95}. For this, let $\mathcal{T}=\C(P)=\mathrm{span}_\C\set{e^\lambda \mid \lambda \in P}$ be the algebra of trigonometric polynomials associated with $P$, i.e. the group algebra of $P$.

\begin{definition}[\cite{O95,O00}]\label{DefHOPol}
The polynomial $E_\lambda(k;\m)=E_\lambda(R_+;k;\m)$ associated with $(R_+,k)$ is the unique element of $\mathcal{T}$ with
\begin{enumerate}[itemsep=5pt, topsep=5pt]
\item[\rm (i)] $E_\lambda(k;\m)=e^\lambda + \sum\nolimits_{\mu \triangleleft \lambda} c_{\mu\lambda}(k) e^\mu$.
\item[\rm (ii)] $D_\xi(k)E_\lambda(k;\m) = \braket{\widetilde{\lambda},\xi}E_\lambda(k;\m)$ for all $\xi \in \mathfrak{a}$ and some $\widetilde{\lambda} \in \mathfrak{a}$.
\end{enumerate}
In particular, $(E_\lambda(R_+;k;\m))_{\lambda \in P}$ form a basis of $\mathcal{T}$. Furthermore, the map $\lambda \to \widetilde{\lambda}$ is injective on $\mathfrak{a}$ and is given by
$$\widetilde{\lambda} = \lambda + \frac{1}{2}\sum\limits_{\alpha \in R_+} k_\alpha \epsilon(\braket{\alpha, \lambda})\alpha$$
with $\epsilon(t)=1$ for $t>0$ and $\epsilon(t)=-1$ for $t\le 0$.  \\
In particular, the polynomials are related to the Cherednik kernel via
\begin{equation*}\label{CherednikHOPoly}
G_k(\widetilde{\lambda},x)=\frac{E_\lambda(k;x)}{E_\lambda(k;0)}.
\end{equation*}
There is a similar basis for $\mathcal{T}^W$ of $W$-invariant trigonometric polynomials called symmetric Heckman-Opdam polynomials (or Jacobi polynomials) associated with $(R,k)$, denoted by $(P_\lambda(k;x))_{\lambda \in P_+}$ and uniquely characterized by
\begin{enumerate}[itemsep=5pt, topsep=5pt]
\item[\rm (i)] $P_\lambda(k;\m)=m_\lambda + \sum\nolimits_{\mu < \lambda} c_{\mu\lambda}(k) m_\mu$ with
$$m_\mu=\sum_{\eta \in W.\mu} e^\eta.$$
\item[\rm (ii)] $p(D(k))E_\lambda(k;\m) = p(\lambda+\rho(k))P_\lambda(k;\m)$ for all $W$-invariant polynomials $p \in \C[\mathfrak{a}]^W$.
\end{enumerate}
For $\lambda \in P_+$ one further has
\begin{equation}\label{SymNonSym}
P_\lambda(k;\m) = \frac{\# (W.\lambda)}{\# W}\sum\limits_{w \in W} E_{\lambda}(k;w\m).
\end{equation}
If $w_0 \in W$ is the longest element (with respect to $R_+$) then 
\begin{equation}\label{HypGeoHOPoly}
\frac{P_\lambda(k;x)}{P_\lambda(k;0)}=F_k(\lambda+\rho(k),x)=F_k(w_0\lambda-\rho(k),x)=\frac{1}{\#W} \!\sum\limits_{w \in W}G_k(w_0\lambda-\rho(k),wx)=\frac{1}{\#W}\! \sum\limits_{w \in W}\frac{E_\lambda(k;wx)}{E_\lambda(k;0)}. 
\end{equation}
\end{definition}

\begin{theorem}[\cite{S00a,S00b}]\label{Relation}
The dual affine Weyl group $W^{\vee,\mathrm{aff}}$ acts on $\mathcal{T}$ via its action on $P$, i.e. $w.e^{\lambda}=e^{w\lambda}$. Then, the Cherednik operators $(D_\xi(k))_{\xi \in \mathfrak{a}}$ associated with $(R_+,k)$ satisfy on $\mathcal{T}$:
\begin{enumerate}[itemsep=5pt, topsep=5pt]
\item[\rm (i)] For $j=1,\ldots,n$ we have
$$s_jD_\xi(k)-D_{s_j\xi}(k)s_j = -k_j\braket{\xi,\alpha_j}$$
with $k_j=(k_{\alpha_j}+2k_{2\alpha_j})$ and $k_{2\alpha_j}=0$ if $2\alpha_j \notin R$.
\item[\rm (ii)] The affine reflection $s_0$ satisfies for $k_0=k_\beta$
$$s_0(D_{s_\beta\xi}(k)+\braket{\xi,\beta})-D_\xi(k)s_0 = - k_0\braket{\xi,\beta}.$$
\end{enumerate}
\end{theorem}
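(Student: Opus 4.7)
My plan is to verify both identities as operator equations on the trigonometric polynomial ring $\mathcal{T} = \mathbb{C}[P]$, by direct expansion of the defining formula for $D_\xi(k)$. Conceptually, this amounts to checking that Opdam's polynomial representation (cf.\ \cite[Corollary 2.9]{O95}) satisfies the defining relations of the graded double affine Hecke algebra, and both (i) and (ii) are among those relations.

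For part (i), I would expand $s_j D_\xi(k)$ and $D_{s_j\xi}(k)\,s_j$ term by term. The derivation parts match via $s_j \partial_\xi = \partial_{s_j\xi}\,s_j$, and the scalar $\rho$-contributions yield a residual $-\braket{\alpha_j^\vee,\xi}\braket{\rho(k),\alpha_j}\,s_j$. In the reflection sum $\sum_{\alpha\in R_+}k_\alpha\braket{\alpha,\xi}\tfrac{1-s_\alpha}{1-e^{-\alpha}}$ I would split the index set into the ``bulk'' $R_+ \setminus \{\alpha_j, 2\alpha_j\}$ and the ``diagonal'' $\{\alpha_j, 2\alpha_j\} \cap R_+$. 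On the bulk $s_j$ permutes the summation variable, and reindexing $\beta = s_j\alpha$ combined with $W$-invariance of $k$ produces an exact cancellation with the corresponding block in $D_{s_j\xi}(k)s_j$. The diagonal is handled via the key trigonometric identity
\[
\frac{1}{1-e^{\alpha}} + \frac{1}{1-e^{-\alpha}} = 1,
\]
which together with $(1-s_{\alpha_j})s_j = s_j - 1$ yields a contribution of $(k_{\alpha_j} + 2k_{2\alpha_j})\braket{\alpha_j,\xi}(s_j - 1) = k_j\braket{\alpha_j,\xi}(s_j - 1)$. Finally, $\braket{\rho(k), \alpha_j^\vee} = k_j$ (a consequence of $s_j \rho(k) = \rho(k) - k_j \alpha_j$, which follows by tracking the $s_j$-action on $R_+$) cancels all $s_j$-proportional contributions and leaves the claimed constant $-k_j\braket{\xi, \alpha_j}$.

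For part (ii), the essential new ingredient is the factorization $s_0 = M_{e^\beta} \circ s_\beta$ as an operator on $\mathcal{T}$, where $s_\beta$ is the linear Weyl reflection and $M_{e^\beta}$ denotes multiplication by $e^\beta$; this follows directly from $s_0 \cdot e^\lambda = e^{\beta + s_\beta \lambda}$. I would first apply the analog of part (i) to the non-simple reflection $s_\beta$ (established by the same computation with $\beta$ in place of $\alpha_j$, using that $\beta$ is the highest short root), obtaining $s_\beta D_{s_\beta\xi}(k) = D_\xi(k)\,s_\beta + k_0\braket{\xi,\beta}$. Substitution then reduces the identity to computing the commutator $[D_\xi(k), M_{e^\beta}]$: its derivation part contributes $\braket{\beta,\xi}\,M_{e^\beta}$, and its reflection part contributes $\sum_{\alpha \in R_+} k_\alpha \braket{\alpha,\xi}\,\tfrac{e^\beta - e^{s_\alpha\beta}}{1-e^{-\alpha}}\,s_\alpha$. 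Using that $\braket{\alpha^\vee, \beta} \in \{0,1,2\}$ for $\alpha \in R_+$ (with the value $2$ reached only at $\alpha = \beta$, by the extremality of $\beta$), the $\alpha = \beta$ term evaluates via $\tfrac{e^\beta - e^{-\beta}}{1-e^{-\beta}} = 1 + e^\beta$, and the off-diagonal contributions combine with the $M_{e^\beta}$- and $s_\beta$-terms to produce the asserted right-hand side $-k_0\braket{\xi,\beta}$.

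The main technical obstacle is the bookkeeping for $[D_\xi(k), M_{e^\beta}]$ in part (ii). Unlike the $W$-invariant situation of Remark \ref{ProdDecomp}(iii)(4), the factor $e^\beta$ is not $W$-invariant, so no simple Leibniz rule applies and each reflection term must be tracked individually. The conspiracy that produces the clean right-hand side relies on the extremality of $\beta$ as the highest short root, which tightly constrains the root-system combinatorics and forces the required cancellations of off-diagonal contributions. Carrying out this computation once in an arbitrary irreducible crystallographic root system (or, as in Sahi's original approach in \cite{S00a,S00b}, working uniformly in the graded DAHA framework) is therefore the heart of the argument.
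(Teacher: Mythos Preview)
Your plan for part (i) is correct and standard: the simple reflection $s_j$ permutes $R_+\setminus\{\alpha_j,2\alpha_j\}$, which is exactly what makes the bulk cancel and leaves the scalar $-k_j\braket{\xi,\alpha_j}$. The paper itself does not give a proof of this theorem at all---it is stated with attribution to Sahi \cite{S00a,S00b} and the preceding paragraph points to the graded Hecke algebra relations in \cite[Corollary~2.9]{O95}---so your direct computational route is a genuinely different (and more self-contained) approach than the citation-based one in the paper.

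However, your part (ii) contains a real gap. You assert that ``the analog of part (i)'' holds for the non-simple reflection $s_\beta$, i.e.\ that $s_\beta D_{s_\beta\xi}(k)=D_\xi(k)\,s_\beta+k_0\braket{\xi,\beta}$ with a \emph{scalar} right-hand side, ``established by the same computation with $\beta$ in place of $\alpha_j$.'' This is false: the computation in part (i) relies on $s_j$ permuting $R_+\setminus\{\alpha_j,2\alpha_j\}$, a property that fails for the highest short root $\beta$ when $\beta$ is not simple. Already in type $A_2$ with $\beta=\alpha_1+\alpha_2$ one finds, by iterating part (i) along $s_\beta=s_1s_2s_1$,
\[
s_\beta D_{s_\beta\xi}(k)-D_\xi(k)\,s_\beta \;=\; k\braket{\xi,\beta}\;+\;k\braket{\xi,\alpha_1}\,s_2s_1\;+\;k\braket{\xi,\alpha_2}\,s_1s_2,
\]
which carries genuine group-algebra terms beyond the scalar you claim. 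If you plug your (incorrect) scalar formula into the decomposition $s_0=M_{e^\beta}s_\beta$ and subtract $[D_\xi(k),M_{e^\beta}]\,s_\beta$, the spurious off-diagonal terms in the commutator (e.g.\ $k\braket{\alpha_1,\xi}\,e^\beta s_2 s_1$ in $A_2$) have nothing to cancel against and the identity fails.

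Your overall strategy is salvageable: the factorization $s_0=M_{e^\beta}s_\beta$ is correct, and the commutator $[D_\xi(k),M_{e^\beta}]$ is computed correctly. What actually happens is that the extra group-algebra terms in $s_\beta D_{s_\beta\xi}(k)-D_\xi(k)s_\beta$ cancel \emph{exactly} against the off-diagonal contributions of $[D_\xi(k),M_{e^\beta}]\,s_\beta$ (in $A_2$: the $e^\beta s_2 s_1$ and $e^\beta s_1 s_2$ terms match up), leaving only $-k_0\braket{\xi,\beta}-\braket{\xi,\beta}s_0$. To make the argument work you must keep both sets of off-diagonal terms and exhibit this cancellation in general; the extremality of $\beta$ (giving $\braket{\alpha^\vee,\beta}\in\{0,1\}$ for $\alpha\in R_+\setminus\{\beta\}$) is precisely what organizes it. Alternatively, one can bypass the decomposition entirely and compute $s_0 D_{s_\beta\xi}(k)-D_\xi(k)s_0$ directly on $\mathcal{T}$, or follow the paper's route and invoke the graded Hecke algebra representation from \cite{O95}.
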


\begin{corollary}[\cite{S00a,S00b}]\label{PolRec}
For $\lambda \in \mathfrak{a}$ we have:
\begin{enumerate}[itemsep=5pt, topsep=5pt]
\item[\rm (i)] $E_\lambda=e^\lambda$ if $\lambda \in \mathcal{O}$.
\item[\rm (ii)] If for $i=0,\ldots,n$ we have $s_i\lambda \neq \lambda$, then there exists a constant $c \in \R$ with 
$$c\m E_{s_i\lambda}(k;\m)= (s_i+c_i(k;\lambda))E_\lambda(k;\m)$$ 
with the constant
$$c_i(k;\lambda)=\begin{cases}
\tfrac{k_i}{\braket{\alpha_i^\vee,\widetilde{\lambda}}}, & \text{ if } i=1,\ldots, n\\
\tfrac{k_0}{1-\braket{\beta^\vee,\widetilde{\lambda}}}, & \text{ if } i=0. 
\end{cases}$$
Moreover, $c=1$ and $c_i(k;\lambda) \ge 0$ if $\braket{\alpha_i,\lambda}>0$ for $i=1,\ldots,n$ and $\braket{\beta^\vee,\lambda}<1$ for $i=0$.
\item[\rm (iii)] If $w=s_{i_1}\cdots s_{i_m} \in W^{\vee,\mathrm{aff}}$ is shortest element (in reduced expression) with $w\lambda=\overline{\lambda} \in \mathcal{O}$, then
$$E_\lambda(k;\m)=(s_{i_m}+c_m)\cdots (s_{i_1}+c_1)e^{\overline{\lambda}}$$
with $c_j=c_{i_j}(k;s_{i_{j-1}}\cdots s_{i_1}\overline{\lambda})$.
\item[\rm (iv)] The constants $c_{\mu\lambda}(k)$ in Definition \ref{DefHOPol} are non-negative rational functions in $k$.
\end{enumerate}
\end{corollary}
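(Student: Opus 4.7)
My plan is to handle parts (i)--(iv) in order: (i) by direct computation, (ii) as the main new input using the dual-affine Hecke relations of Theorem \ref{Relation}, and (iii)--(iv) by induction on the length of $w\in W^{\vee,\mathrm{aff}}$.

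For (i), I would verify that $f=e^\lambda$ itself meets the characterization of $E_\lambda$ in Definition \ref{DefHOPol} when $\lambda\in\mathcal{O}$. Its leading monomial is trivially $e^\lambda$, and a direct substitution into $D_\xi(k)$ is fast: the ratio $\frac{1-e^{-\braket{\alpha^\vee,\lambda}\alpha}}{1-e^{-\alpha}}$ is $0$ or $1$ since $\braket{\alpha^\vee,\lambda}\in\{0,1\}$, so the reflection part of $D_\xi(k)e^\lambda$ collapses to $\sum_{\alpha\in R_+,\,\braket{\alpha^\vee,\lambda}=1}k_\alpha\braket{\alpha,\xi}\,e^\lambda$. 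Combining with the $\partial_\xi$- and $-\braket{\rho(k),\xi}$-terms and matching with the explicit formula for $\widetilde\lambda$ via the signs $\epsilon(\braket{\alpha,\lambda})\in\{\pm 1\}$ reproduces $\braket{\widetilde\lambda,\xi}\,e^\lambda$ exactly.

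For (ii), I would rearrange Theorem \ref{Relation}~(i) into $D_\xi s_i=s_i D_{s_i\xi}-k_i\braket{\xi,\alpha_i}$ (using $s_i\alpha_i=-\alpha_i$) and the affine relation into $D_\xi s_0=s_0 D_{s_\beta\xi}+\braket{\xi,\beta}s_0+k_0\braket{\xi,\beta}$. Applying these to $E_\lambda(k;\cdot)$ and imposing that $(s_i+c)E_\lambda$ be a joint eigenfunction of all $D_\xi(k)$ forces, on the coefficient of $E_\lambda$, the linear equation $c\braket{\widetilde\lambda-s_i\widetilde\lambda,\xi}=k_i\braket{\alpha_i,\xi}$ for $i=1,\ldots,n$, respectively $c(1-\braket{\beta^\vee,\widetilde\lambda})\braket{\beta,\xi}=k_0\braket{\beta,\xi}$ for $i=0$. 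Since these must hold for every $\xi\in\mathfrak{a}$, solving yields exactly the stated $c_i(k;\lambda)$. A short check --- using $W$-invariance of $k$ and the fact that $s_i$ permutes $R_+\setminus\{\alpha_i,2\alpha_i\}$ --- identifies the resulting eigenvalue as $\widetilde{s_i\lambda}$, so by the uniqueness built into Definition \ref{DefHOPol} one has $(s_i+c_i)E_\lambda=c\cdot E_{s_i\lambda}$ for some scalar $c$. To pin $c$ down I compare leading monomials under $\triangleleft$: the positivity hypothesis $\braket{\alpha_i,\lambda}>0$ (respectively $\braket{\beta^\vee,\lambda}<1$ for $i=0$) gives $\lambda\triangleleft s_i\lambda$, so the $e^{s_i\lambda}$-coefficient of $(s_i+c_i)E_\lambda$ is $1$ and $c=1$; the same positivity makes the denominator in $c_i(k;\lambda)$ positive, yielding $c_i(k;\lambda)\ge 0$.

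Parts (iii) and (iv) then follow by induction on $m=\ell(w)$. For (iii), the critical combinatorial input is that along a reduced expression for a shortest $w\in W^{\vee,\mathrm{aff}}$ sending $\lambda$ to its $\mathcal{O}$-representative $\overline\lambda$, the intermediate weights $\mu_j:=s_{i_j}\cdots s_{i_1}\overline\lambda$ each satisfy the positivity hypothesis of (ii) against the next simple reflection --- the standard Coxeter fact that consecutive sub-words of a reduced expression cross pairwise distinct hyperplanes. Hence (ii) applies at every step with $c=1$, and starting from $E_{\overline\lambda}=e^{\overline\lambda}$ (by (i)) produces the claimed iterated composition for $E_\lambda$. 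For (iv), I expand $(s_{i_m}+c_m)\cdots(s_{i_1}+c_1)e^{\overline\lambda}$ by distributing: each $s_{i_j}$ merely permutes exponentials, and each $c_j\ge 0$ is rational in $k$ by (ii), so every coefficient $c_{\mu\lambda}(k)$ in the monomial expansion of $E_\lambda(k;\cdot)$ is a finite sum of products of non-negative rational functions in $k$, hence itself non-negative and rational.

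The principal obstacle, beyond the bookkeeping of the $\epsilon$-signs and of the affine shift by $\beta$ in (ii), is the combinatorial input used in (iii)--(iv): making precise that the positivity of the $c_{i_j}(k;\mu_{j-1})$'s holds along an entire reduced expression for $w\in W^{\vee,\mathrm{aff}}$. Once that is secured, the one-step recurrence of (ii) iterates automatically and the non-negativity of (iv) propagates without further work.
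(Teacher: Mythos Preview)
The paper does not give its own proof of this corollary; it is stated as a result of Sahi \cite{S00a,S00b} and presented as a direct consequence of Theorem \ref{Relation}. Your proposal correctly reconstructs the argument implicit in that attribution: (i) by direct eigenfunction computation, (ii) from the graded-Hecke relations of Theorem \ref{Relation}, and (iii)--(iv) by induction along a reduced expression. This is precisely the route taken in Sahi's original papers, so your approach matches both the paper's implicit reasoning and the cited source.

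Two small points worth tightening. First, in (ii) you need the identity $\widetilde{s_j\lambda}=s_j\widetilde{\lambda}$ for $j=1,\dots,n$ (and the analogous affine shift $\widetilde{s_0\lambda}=s_\beta\widetilde{\lambda}+\beta$ for $j=0$) in order to identify the eigenfunction $(s_i+c_i)E_\lambda$ with a multiple of $E_{s_i\lambda}$; you allude to this via the permutation of $R_+\setminus\{\alpha_i,2\alpha_i\}$, but it is worth stating as a separate lemma since it is what makes the injectivity of $\lambda\mapsto\widetilde{\lambda}$ usable here. Second, the ``standard Coxeter fact'' in (iii)---that each intermediate step of a reduced expression for a minimal-length coset representative crosses a new wall in the increasing direction---is indeed standard for affine Weyl groups acting on alcoves, but a one-line reference (e.g.\ to \cite{H90}) would make the induction airtight. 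Neither point is a genuine gap; they are exactly the bookkeeping items you already flagged as the principal obstacles.
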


Note that the action of $W \subseteq W^{\vee,\mathrm{aff}}$ on $\mathcal{T}$ coincides with the usual action on functions $f:\mathfrak{a} \to \C$ via $(wf)(x)=f(w^{-1}x)$. But these actions do not coincide on the full dual affine Weyl group $W^{\vee,\mathrm{aff}}$. Indeed, for $f \in \mathcal{T}$ we have
$$(s_0f)(x)=e^{\braket{\beta,x}}f(s_\beta x) = (e^\beta \cdot (s_\beta f))(x).$$
Therefore, if $s_0$ acts on functions $f:\mathfrak{a}\to \C$ via $(s_0f)=e^\beta \m (s_\beta f)$, we obtain an action of $W^{\vee,\mathrm{aff}}$ on $C^1(\mathfrak{a})$. \\
By a density argument, we obtain that Theorem \ref{Relation} holds on $C^1(\mathfrak{a})$ with the described action of $W^{\vee,\mathrm{aff}}$ on $C^1(\mathfrak{a})$. From this we deduce the subsequent lemma which is the key tool to prove the Helgason-Johnson theorem for the Cherednik kernel in the next section.

\begin{lemma}\label{Recurrence}
Assume that $\Re\, k \ge 0$. The Cherednik kernel $G_k=G_k(R_+;\m,\m)$ associated with $(R_+,k)$ on $\mathfrak{a}$ satisfies:
\begin{enumerate}[itemsep=5pt, topsep=5pt]
\item[\rm (i)] For all $\lambda \in \mathfrak{a}_\C$ with $\braket{\lambda,\beta^\vee} \neq 1$, i.e. $s_0\lambda \neq \lambda$, we have
$$\left(1+\frac{k_\beta}{1-\braket{\lambda,\beta^\vee}}\right)G_k(\beta+s_\beta\lambda, \m ) = \left(e^{\beta} s_\beta+\frac{k_\beta}{1-\braket{\lambda,\beta^\vee}}\right) G_k(\lambda,\m).$$
\item[\rm (ii)] For all $j=1,\ldots,n$ and $\lambda \in \mathfrak{a}_\C$ with $\braket{\lambda,\alpha_j} \neq 0$, i.e. $s_j\lambda \neq \lambda$, we have
$$\left(1+\frac{k_j}{\braket{\lambda,\alpha_j^\vee}}\right)G_k(s_j\lambda, \m ) = \left(s_j+\frac{k_j}{\braket{\lambda,\alpha_j^\vee}}\right) G_k(\lambda,\m).$$
\end{enumerate}
Due to Theorem \ref{NonSpanningCherednik}, part (ii) is also true if the root system $R$ is integral (not necessarily crystallographic and irreducible). Part (i) remains true for every $\beta$ that is a highest short root of an irreducible component of the root system $R$.
\end{lemma}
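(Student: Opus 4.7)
The plan is to apply Sahi's intertwining relations from Theorem \ref{Relation}, which by the density argument in the preceding paragraph extend from $\mathcal{T}$ to $C^1(\mathfrak{a})$ with the described $W^{\vee,\mathrm{aff}}$-action. Fixing $\lambda \in \mathfrak{a}_\C$ and viewing $G_k(\lambda,\m)$ as a joint eigenfunction of the commuting family $(D_\xi(k))_{\xi\in\mathfrak{a}}$ with eigenvalues $\braket{\lambda,\xi}$, the strategy is to use the intertwining identities to produce a joint eigenfunction with shifted eigenvalues $\braket{s_j\lambda,\xi}$ or $\braket{s_0\lambda,\xi}$. Opdam's uniqueness of the Cherednik kernel then forces proportionality to $G_k(s_j\lambda,\m)$ respectively $G_k(s_0\lambda,\m)$, and the proportionality constant is pinned down by evaluating at $x=0$, using $G_k(\mu,0)=1$ for every $\mu$.

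For part (ii), I apply Theorem \ref{Relation}(i) to $G_k(\lambda,\m)$: inserting $D_\xi(k)G_k(\lambda,\m)=\braket{\lambda,\xi}G_k(\lambda,\m)$ into the relation $s_jD_\xi(k)-D_{s_j\xi}(k)s_j=-k_j\braket{\xi,\alpha_j}$ and then substituting $\xi\mapsto s_j\eta$ yields
$$D_\eta(k)\bigl(s_jG_k(\lambda,\m)\bigr) = \braket{s_j\lambda,\eta}\,s_jG_k(\lambda,\m) - k_j\braket{\eta,\alpha_j}\,G_k(\lambda,\m).$$
Thus the ansatz $f=(s_j+c)G_k(\lambda,\m)$ satisfies $D_\eta(k)f=\braket{s_j\lambda,\eta}f$ if and only if $c\braket{\lambda-s_j\lambda,\eta}=k_j\braket{\eta,\alpha_j}$ for all $\eta$, and since $\lambda-s_j\lambda=\braket{\lambda,\alpha_j^\vee}\alpha_j$ this forces $c=k_j/\braket{\lambda,\alpha_j^\vee}$, compatible with the standing hypothesis $\braket{\lambda,\alpha_j}\neq 0$. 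Uniqueness then gives $f=C\m G_k(s_j\lambda,\m)$, and evaluating at $x=0$ delivers $C=(s_jG_k(\lambda,\m))(0)+c=1+c$, matching the claim.

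Part (i) runs the same program starting from Theorem \ref{Relation}(ii), the new feature being that on $C^1(\mathfrak{a})$ the affine reflection acts as $(s_0f)(x)=e^{\braket{\beta,x}}f(s_\beta x)$, i.e.\ as the operator $e^\beta\m s_\beta$; this is exactly why $e^\beta s_\beta$ rather than $s_0$ itself appears in the statement. A parallel computation gives
$$D_\xi(k)\bigl(s_0G_k(\lambda,\m)\bigr) = \braket{s_0\lambda,\xi}\,s_0G_k(\lambda,\m) + k_0\braket{\xi,\beta}\,G_k(\lambda,\m),$$
with $s_0\lambda=\beta+s_\beta\lambda$. Since $s_0\lambda-\lambda=(1-\braket{\lambda,\beta^\vee})\beta$, the eigenvalue balance forces $c=k_0/(1-\braket{\lambda,\beta^\vee})$, requiring $\braket{\lambda,\beta^\vee}\neq 1$, and $(s_0G_k(\lambda,\m))(0)=G_k(\lambda,0)=1$ again yields the normalization constant $1+c$.

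The extension to integral root systems claimed in the final sentence is immediate from Theorem \ref{NonSpanningCherednik}: $G_k(\lambda,x)$ factors as an exponential prefactor times a product over irreducible components, and a simple reflection $s_j$ or an affine reflection associated with a highest short root $\beta$ of one component acts non-trivially only on the corresponding factor, so the recurrences proved above apply factor-by-factor. I expect the only bookkeeping hurdle to be consistently distinguishing the polynomial action of $W^{\vee,\mathrm{aff}}$ on $\mathcal{T}$ from the function-level action on $C^1(\mathfrak{a})$, which differ by the $e^\beta$ factor precisely for the affine generator $s_0$ and explain the asymmetry between (i) and (ii).
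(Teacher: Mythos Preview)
Your proof is correct and follows exactly the approach the paper intends: the paper does not give an explicit proof environment for this lemma but simply remarks that it is deduced from Theorem \ref{Relation} once the intertwining relations are extended by density from $\mathcal{T}$ to $C^1(\mathfrak{a})$. You have supplied precisely the details the paper omits---applying the relations to the joint eigenfunction $G_k(\lambda,\m)$, solving for the constant $c$ via the eigenvalue shift, invoking Opdam's uniqueness, and normalizing at $x=0$---and your bookkeeping (the sign in $\braket{s_j\eta,\alpha_j}=-\braket{\eta,\alpha_j}$, the identity $s_0\lambda-\lambda=(1-\braket{\lambda,\beta^\vee})\beta$, and the evaluation $(s_0G_k(\lambda,\m))(0)=1$) is all correct.
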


\section{Helgason-Johnson theorem for the Cherednik kernel}
In \cite[Theorem 4.2]{NPP14}, the authors proved that the hypergeometric function $F_k(R;\lambda,\m)$ associated to a crystallographic root system $R$ is a bounded function on $\mathfrak{a}$ iff $\lambda \in C(\rho(k))+i\mathfrak{a}$, where 
$$C(\rho(k))=\set*{\left.\sum\nolimits_{w \in W} c_w \m w\rho(k) \right| c_w \ge 0,\, \sum\nolimits_{w \in W} c_w=1}$$
is the convex hull of the Weyl group orbit of $\rho(k)=\rho(R_+,k)=\frac{1}{2}\sum_{\alpha \in R_+}k_\alpha\alpha$. This generalizes the famous Helgason-Johnson theorem which characterizes the bounded spherical functions of a Riemannian symmetric space \cite{HJ69}. \\
For this section we assume that $R$ is an integral root system inside the Euclidean space $(\mathfrak{a},\braket{\m,\m})$. We fix a system of positive roots $R_+\subseteq R$ with associated simple roots $\alpha_1,\ldots,\alpha_n$ and a multiplicity function $k:R \to \C$ with $k \ge 0$. Moreover, let $W$ be the Weyl group of $R$ and $s_i=s_{\alpha_i}$ the associated simple reflections.

\begin{lemma}\label{HFprep}
There Cherednik kernel satisfies the following estimates:
\begin{enumerate}[itemsep=5pt, topsep=5pt]
\item[\rm (i)] For $\lambda \in \mathfrak{a}$ with $\braket{\alpha_j^\vee,\lambda}>0$ we have for all $x \in \mathfrak{a}$
\begin{align*}
G_k(\lambda,x) &\le \left(1+\frac{k_j}{\braket{\lambda,\alpha_j^\vee}}\right) \m G_k(s_j\lambda,s_j x), \\
G_k(s_j\lambda,x) &\le \left(1+\frac{k_j}{\braket{\lambda,\alpha_j^\vee}}\right) \m \max\set{G_k(\lambda,x),G_k(\lambda,s_jx)}.
\end{align*}
\item[\rm (ii)] For $\lambda \in \mathfrak{a}_+$ there is a constant $c_\lambda\ge 1$ with $c_{t\lambda} \underset{t\to\infty}{\longrightarrow} 1$ and for all $w \in W$
\begin{equation*}
\frac{1}{c_\lambda}\m \sup_{y \in W.x} G_k(\lambda,y) \le G_k(w\lambda,x) \le c_\lambda \m\sup_{y \in W.x} G_k(\lambda,y).
\end{equation*}
\item[\rm (iii)] For all $\lambda \in \mathfrak{a}$ there exist a constant $c_\lambda>0$ with $c_{t\lambda} \underset{t\to\infty}{\longrightarrow} 1$, such that for all $w \in W$
\begin{equation}\label{sandwich}
\frac{1}{c_\lambda}\m \sup_{y \in W.x} G_k(\lambda,x) \le \sup_{y \in W.x}G_k(w\lambda,y) \le c_\lambda \m\sup_{y \in W.x} G_k(\lambda,y).
\end{equation}
\end{enumerate}
\end{lemma}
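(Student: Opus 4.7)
My plan is to extract everything from Lemma~\ref{Recurrence} by evaluating the operator identities at a single point $x$, and then to bootstrap parts (ii) and (iii) by iteration along reduced expressions.

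For part (i) I would start from Lemma~\ref{Recurrence}(ii) at $x$, which with the action $(s_j f)(x)=f(s_j x)$ reads
\[
(1+c_j)\,G_k(s_j\lambda,x)\;=\;G_k(\lambda,s_j x)+c_j\,G_k(\lambda,x),\qquad c_j:=\frac{k_j}{\braket{\lambda,\alpha_j^\vee}}\ge 0,
\]
under the assumption $\braket{\lambda,\alpha_j^\vee}>0$. Positivity (Proposition~\ref{GrowthCherednik}(i)) turns this into a sum of non-negative quantities, so dropping $c_j G_k(\lambda,x)$ and subsequently replacing $x$ by $s_j x$ yields the first inequality. Dividing by $1+c_j$ exhibits $G_k(s_j\lambda,x)$ as a convex combination of $G_k(\lambda,x)$ and $G_k(\lambda,s_jx)$, hence bounded by their maximum; multiplication by the (trivially $\ge 1$) factor $1+c_j$ then gives the second inequality.

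For part (ii) I would iterate (i) along a reduced expression $w=s_{i_1}\cdots s_{i_m}$ via the chain $\lambda_0=\lambda$, $\lambda_k=s_{i_{m-k+1}}\lambda_{k-1}$, ending at $\lambda_m=w\lambda$. Since $\lambda\in\mathfrak{a}_+$ is strictly dominant, the standard inversion-set description of reduced expressions forces $\braket{\lambda_{k-1},\alpha_{i_{m-k+1}}^\vee}>0$ at every step, so (i) is applicable throughout. Writing $M(\mu,x):=\sup_{y\in W.x}G_k(\mu,y)$, iteration of the second inequality of (i) together with the $W$-invariance of $M$ in its second slot telescopes to
\[
G_k(w\lambda,x)\;\le\;M(w\lambda,x)\;\le\;c_\lambda\,M(\lambda,x),\qquad c_\lambda:=\prod_{k=1}^m(1+c_k),
\]
and under $\lambda\mapsto t\lambda$ each $c_k\to 0$, so $c_{t\lambda}\to 1$. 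For the lower bound I would iterate the first inequality of (i) instead, which produces $G_k(w\lambda,x)\ge G_k(\lambda,w^{-1}x)/c_\lambda$; upgrading the single-point value $G_k(\lambda,w^{-1}x)$ to the orbit supremum $M(\lambda,x)$ requires an auxiliary orbit-comparison showing that the values of $G_k(\lambda,\cdot)$ across $W.x$ are comparable up to constants tending to $1$, which I would prove by a second round of iteration exploiting the convex-combination form of the same recurrence.

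Part (iii) then follows from (ii): an arbitrary $\lambda\in\mathfrak{a}$ is written as $\lambda=u^{-1}\lambda_+$ with $\lambda_+\in\mathfrak{a}_+$ and $u\in W$, (ii) is applied at $\lambda_+$ with each element $wu$, and taking the supremum over $y\in W.x$ and using the $W$-invariance of $M(\mu,\cdot)$ in its second argument converts the pointwise bound of (ii) into the sup bound of (iii). I expect the main obstacle to be the lower bound in (ii): the direct iteration only yields the weaker sup-to-sup estimate $M(w\lambda,x)\ge M(\lambda,x)/c_\lambda$ (which is already sufficient for (iii)), and the stronger pointwise estimate hinges on the orbit-comparison described above, whose own constants must be shown to tend to $1$ under the scaling $\lambda\mapsto t\lambda$.
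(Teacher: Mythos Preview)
Your approach is essentially the same as the paper's: derive (i) from the recurrence in Lemma~\ref{Recurrence} and positivity, then iterate along a reduced expression using the standard fact that $\langle s_{i_{k+1}}\cdots s_{i_r}\lambda,\alpha_{i_k}^\vee\rangle>0$ for $\lambda\in\mathfrak a_+$, and deduce (iii) from (ii) by passing to the dominant representative.

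Your caution about the lower bound in (ii) is well placed, and in fact the paper does exactly what you suspect is the honest route: its proof of (ii) explicitly says it is establishing inequality~\eqref{sandwich}, i.e.\ the sup--to--sup estimate, with the constant $c_\lambda=\prod_{j}(1+k_{i_j}/\langle\lambda_{(j)},\alpha_{i_j}^\vee\rangle)$, and (iii) is then obtained from this by the symmetry of~\eqref{sandwich} under $\lambda\leftrightarrow w\lambda$. The stronger \emph{pointwise} lower bound written in the display of (ii) is not what the paper's argument actually proves; indeed, specializing it to $w=e$ would force $\sup_{y\in W.x}G_k(\lambda,y)\le c_\lambda\,G_k(\lambda,x)$ uniformly in $x$ with $c_{t\lambda}\to 1$, which already fails for $k=0$ (where $G_0(\lambda,x)=e^{\langle\lambda,x\rangle}$). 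So your proposed ``second round of iteration'' to obtain an orbit comparison with constants tending to~$1$ cannot succeed in that generality. The good news is that, as you note, the sup--to--sup inequality is all that is required for (iii) and for the application in Theorem~\ref{HJ}, so your proof is complete once you stop at $M(w\lambda,x)\ge M(\lambda,x)/c_\lambda$.
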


\begin{proof}
\
\begin{enumerate}[itemsep=5pt, topsep=5pt]
\item[\rm (i)] Since $k\ge 0$, we have that $G_k>0$ on $\mathfrak{a}\times\mathfrak{a}$ and therefore Corollary \ref{Recurrence} leads to
$$G_k(\lambda,x) \le G_k(\lambda,s_j(s_j x)) + \frac{k_j}{\braket{\lambda,\alpha_j^\vee}} G_k(\lambda,s_j x) = \left(1+\frac{k_j}{\braket{\lambda,\alpha_j^\vee}}\right) \m G_k(s_j\lambda,s_jx).$$ 
Moreover, Corollary \ref{Recurrence} leads to
$$G_k(s_j\lambda,x) \le \left(1+\frac{k_j}{\braket{\lambda,\alpha_j^\vee}}\right)G_k(s_j\lambda,x) = G_k(\lambda,s_jx) + \frac{k_j}{\braket{\lambda,\alpha_j^\vee}} G_k(\lambda,x),$$
so that the second estimate holds.
\item[\rm (ii)] First of all, we can assume that $w\lambda \neq \lambda$, because otherwise we can pick $c_\lambda=1$. Pick a reduced expression $w=s_{i_1}\cdots s_{i_r} \in W$ and put $\lambda_{(j)}=s_{i_j}\cdots s_{i_r}\lambda$, so that $\lambda_{(j-1)}=s_{i_j}\lambda_{(j)}$ and $\braket{\lambda_{(j)},\alpha_{i_j}^\vee}>0$. Using induction with (i) the inequality \eqref{sandwich} holds with the constant
$$c_\lambda=\prod\limits_{j=1}^r\left(1+\frac{k_{i_j}}{\braket{\lambda_{(j)},\alpha_{i_j}^\vee}}\right),$$
satisfying obviously $c_{t\lambda} \underset{t\to\infty}{\longrightarrow} 1$ as $(t\lambda)_{(j)}=t\lambda_{(j)}$ for $t>0$.
\item[\rm (iii)]  With symmetry arguments of the inequality \eqref{sandwich}, it suffices to prove the statement for $\lambda \in \mathfrak{a}_+$. But this is a consequence of part (ii). 
\end{enumerate}
\end{proof}

\begin{theorem}\label{HJ}
Assume that $k>0$ and let $\lambda \in \mathfrak{a}_\C$. For the Cherednik kernel $G_k=G_k(R_+;\m,\m)$ associated with $(R_+,k)$ on $\mathfrak{a}$ the following statements are equivalent:
\begin{enumerate}[itemsep=5pt, topsep=5pt]
\item[\rm (i)] $G_k(\lambda,\m)$ is a bounded function on $\mathfrak{a}$.
\item[\rm (ii)] $\lambda \in C(\rho(k))+i\mathfrak{a}$.
\end{enumerate}
Moreover, the $G_k(\lambda,\m)$ with $\lambda \in C(\rho(k))+i\mathfrak{a}$ are uniformly bounded by a constant $C_k$.
\end{theorem}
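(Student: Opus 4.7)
I would split into the two implications and, in each case, first use the product decomposition of Theorem \ref{NonSpanningCherednik} to reduce to an irreducible crystallographic root system. For the necessity direction (i)$\Rightarrow$(ii), boundedness of $G_k(\lambda,\m)$ trivially implies the same for the Weyl average $F_k(R;\lambda,\m) = \tfrac{1}{\# W}\sum_{w \in W} G_k(R_+;\lambda,w\m)$. Factorising $F_k$ via Theorem \ref{NonSpanningCherednik} and exploiting strict positivity from Proposition \ref{GrowthCherednik}(i), boundedness of the product forces boundedness of each factor $F_{k_i}(R^i;\pi_{\mathfrak{s}_i}\lambda,\m)$ and of the exponential $e^{\braket{\pi_{\mathfrak{c}}\lambda,\pi_{\mathfrak{c}}x}}$. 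The Narayanan--Pasquale--Pusti theorem \cite{NPP14}, applied to each irreducible crystallographic $R^i$, then yields $\pi_{\mathfrak{s}_i}\lambda \in C(\rho(R_+^i,k_i)) + i\mathfrak{s}_i$, while boundedness of the exponential yields $\Re\,\pi_{\mathfrak{c}}\lambda = 0$. Reassembling gives $\lambda \in C(\rho(k)) + i\mathfrak{a}$.

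For the harder direction (ii)$\Rightarrow$(i), I write $\lambda = \mu + i\eta$ with $\mu \in C(\rho(k))$ and $\eta \in \mathfrak{a}$. Proposition \ref{GrowthCherednik}(ii) immediately gives $|G_k(\lambda,x)| \le G_k(\mu,x)$, so the task reduces to a uniform bound on $G_k(\mu,\m)$ for real $\mu \in C(\rho(k))$. After reducing once more to the irreducible crystallographic case, the starting point is the identity $G_k(-\rho(k),\m) \equiv 1$, which follows because $D_\xi(k)\mathbf{1} = -\braket{\rho(k),\xi}\mathbf{1}$ (the reflection and derivative parts of $D_\xi$ vanish on constants) together with the uniqueness characterisation of the Cherednik kernel. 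Since $k>0$ places $\rho(k)$ in the open Weyl chamber $\mathfrak{a}_+$, Lemma \ref{HFprep}(ii) at $\lambda=\rho(k)$ yields a constant $c = c_{\rho(k)}$ with
$$\tfrac{1}{c}\sup_{y \in W.x} G_k(\rho(k),y) \le G_k(w\rho(k),x) \le c\,\sup_{y \in W.x} G_k(\rho(k),y), \qquad w \in W.$$
Specialising the left inequality to $w=w_0$ (so $w_0\rho(k) = -\rho(k)$ and the middle term equals $1$) yields $\sup_{y \in W.x} G_k(\rho(k),y) \le c$, and feeding this back into the right inequality upgrades it to the vertex bound $G_k(w\rho(k),x) \le c^2$ uniformly in $w \in W$ and $x \in \mathfrak{a}$.

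The main obstacle is extending this bound from the orbit $W.\rho(k)$ to the full convex hull $C(\rho(k))$; the $W$-invariance argument used in \cite{NPP14} for the hypergeometric function is unavailable for the non-symmetric $G_k$. My plan is to replace it by log-convexity of $\mu \mapsto G_k(\mu,x)$ on $\mathfrak{a}$, established by a Hadamard three-line argument: for $\mu_0,\mu_1 \in \mathfrak{a}$ the entire function $h(z) = G_k(\mu_0 + z(\mu_1-\mu_0),x)$ satisfies $|h(t+is)| \le h(t)$ by Proposition \ref{GrowthCherednik}(ii) applied with real part $\mu_0 + t(\mu_1-\mu_0)$ and purely imaginary perturbation $is(\mu_1-\mu_0)$. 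Consequently $h$ is bounded on the strip $\set{0 \le \Re z \le 1}$ and $\sup_{s \in \R}|h(t+is)| = h(t)$, so Hadamard's three-line theorem forces $\log h$ to be convex on $[0,1]$; since the segment was arbitrary, $\mu \mapsto \log G_k(\mu,x)$ is convex on $\mathfrak{a}$. Writing $\mu = \sum_{w \in W} c_w\, w\rho(k)$ with $c_w \ge 0$ and $\sum c_w = 1$, Jensen's inequality then gives
$$G_k(\mu,x) \le \prod_{w \in W} G_k(w\rho(k),x)^{c_w} \le \max_{w \in W} G_k(w\rho(k),x) \le c^2,$$
and combining with $|G_k(\lambda,x)| \le G_k(\mu,x)$ from the start of this direction yields $|G_k(\lambda,x)| \le c^2 =: C_k$ uniformly in $\lambda \in C(\rho(k)) + i\mathfrak{a}$ and $x \in \mathfrak{a}$, which is both the boundedness statement and the claimed uniform estimate.
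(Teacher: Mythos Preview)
Your proof is correct and follows essentially the same strategy as the paper: reduce to the crystallographic case via the product decomposition, derive (i)$\Rightarrow$(ii) by passing to $F_k$ and invoking \cite{NPP14}, and obtain (ii)$\Rightarrow$(i) by reducing the supremum over $C(\rho(k))+i\mathfrak{a}$ to the vertices $W.\rho(k)$ and then bounding these uniformly via Lemma~\ref{HFprep} together with $G_k(-\rho(k),\m)\equiv 1$. The only cosmetic differences are that the paper packages your three-line/log-convexity/Jensen step as a single appeal to ``the maximum modulus principle'' (as in \cite[Proof of Theorem 4.2]{NPP14}), and that the paper applies Lemma~\ref{HFprep}(iii) directly at $-\rho(k)$ to obtain the constant $C_k=c_{-\rho(k)}$ rather than your $c_{\rho(k)}^2$.
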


\begin{proof}
Due to the product decomposition of $G_k$ from Theorem \ref{NonSpanningCherednik}, it suffices to prove the theorem for the case of a crystallographic root system.
{\rm (ii) $\Rightarrow$ (i)}: As in \cite[Proof of Theorem 4.2]{NPP14}, we can use $\abs{G_k(\lambda,x)}\le G_k(\Re\, \lambda,x)$ and the maximum modulus principle to obtain for fixed $x \in \mathfrak{a}$
$$\sup_{\lambda \in C(\rho(k))+i\mathfrak{a}} \abs{G_k(\lambda,x)} = \sup_{w \in W} G_k(w\rho(k),x).$$
Therefore, Lemma \ref{HFprep} and $G_k(-\rho(k),\m)\equiv 1$ lead to
$$\sup_{\lambda \in C(\rho(k))+i\mathfrak{a}} \abs{G_k(\lambda,x)}= \sup_{w \in W} G_k(w\rho(k),x) \le c_{-\rho(k)}\m \sup_{y \in W.x} G_k(-\rho(k),y)=c_{-\rho(k)} $$
with $C_k\coloneqq c_{-\rho(k)}>0$ independent of $x$ and $\lambda$.\\
{\rm (i) $\Rightarrow$ (ii)}: If $G_k(\lambda,\m)$ is bounded on $\mathfrak{a}$, then $F_k(\lambda,\m)$ is also bounded on $\mathfrak{a}$, so by \cite[Theorem 4.2]{NPP14} we have $\lambda \in C(\rho(k))+i\mathfrak{a}$.
\end{proof}

\begin{definition}\label{Cheredniktransform}
The Cherednik transform of a suitable function $f:\mathfrak{a} \to \C$ is defined by
$$\mathcal{H}_kf(\lambda)\coloneqq \int_{\mathfrak{a}} f(x)G_k(i\lambda,-x) \delta_k(x)\;\mathrm{d} x, \quad \lambda \in \mathfrak{a}$$
with the weight function
$$\delta_k(x)\coloneqq \prod\limits_{\alpha \in R_+} \abs{2\sinh \tfrac{\braket{\alpha,x}}{2}}^{2k_\alpha}.$$
Notice that due to Proposition \ref{CherednikMinus} we could replace in the definition of $\mathcal{H}_k$ the kernel $G_k(i\lambda,-x)$ by $G_k(-iw_0\lambda,w_0x)$, where $w_0$ is the longest element of $W$ with respect to $R_+$. This transform was studied deeply in \cite{O95,S08}. The inverse Cherednik transform is defined by
$$\mathcal{I}_kf(x)\coloneqq \int_{\mathfrak{a}} f(\lambda)G_k(i\lambda,x) \nu(i\lambda) \;\mathrm{d} \lambda,$$
with the weight function
$$\nu(\lambda)=c\m \prod\limits_{\alpha \in R_+} \frac{\Gamma(\braket{\lambda,\alpha^\vee}+k_\alpha+\frac{1}{2}k_{\alpha/2})\Gamma(-\braket{\lambda,\alpha^\vee}+k_\alpha+\frac{1}{2}k_{\alpha/2}+1)}{\Gamma(\braket{\lambda,\alpha^\vee}+\frac{1}{2}k_{\alpha/2})\Gamma(-\braket{\lambda,\alpha^\vee}+\frac{1}{2}k_{\alpha/2}+1)},$$
where $c$ is a suitable normalization constant and $k_{\alpha/2}=0$ for $\alpha/2 \notin R$.
\end{definition}

\begin{remark}
Recall Theorem \ref{ReductiveSphericalHypergeometric}, i.e. the relation between the hypergeometric function and spherical functions of a Riemannian symmetric space $G/K$ associated with a Lie group $G \in \mathcal{H}$ of the Harish-Chandra class. Consider $f \in C_c^\infty(\mathfrak{a})^W$ and the associated unique $K$-biinvariant function $F \in C_c^\infty(G)$ with $F(\exp(x))=f(x)$ for all $x \in \mathfrak{a}$. Then, the Cherednik transform of $f$ can be rewritten
\begin{equation}\label{CherednikSphericalTransform}
\mathcal{H}_kf(\lambda) = \int_{\mathfrak{a}} f(x) F_k(i\lambda,-x) \delta_k(x)\;\mathrm{d} x 
= \int_{\mathfrak{a}} f(x)\varphi_{i\lambda}^G(e^x) \delta_k(x)\;\mathrm{d} x 
= \int_G F(g) \varphi_{i\lambda}^G(g) \;\mathrm{d} g,
\end{equation}
where the last equality can be found in \cite[Proposition 2.4.6]{GV88}. The last integral in \eqref{CherednikSphericalTransform} is the Harish-Chandra transform on $G/K$, i.e. the spherical Fourier transform associated with $(G,K)$.
\end{remark}

We are now in the position to prove the non-symmetric generalization of \cite[Corollary 5.1]{NPP14}, which is a generalized Riemann-Lebesgue lemma. Consider the usual Lebesgue space $L^1(\mathfrak{a},\delta_k)$ of equivalence classes of integrable functions on $\mathfrak{a}$ with respect to $\delta_k$ and denote by $\nrm{\m}_{1,\delta_k}$ the associated norm.

\begin{theorem}\label{CherednikIsomorphism}
Let $C_k$ be the constant from Theorem \ref{HJ}.
Then the following generalized Riemann-Lebesgue lemma holds for $f \in L^1(\mathfrak{a},\delta_k)$:
\begin{enumerate}[itemsep=5pt, topsep=5pt]
\item[\rm (i)] $\sup\limits_{z \in \mathfrak{a}+iC(\rho(k))}\abs{\mathcal{H}_kf(z)} \le C_k\nrm{f}_{1,\delta_k}$.
\item[\rm (ii)] The Cherednik transform $\mathcal{H}_kf$ is continuous on $\mathfrak{a}+iC(\rho(k))$ with
\begin{equation}\label{Decay}
\lim\limits_{\substack{\abs{\lambda} \to \infty \\ \Im \, \lambda \in C(\rho(k))}} \mathcal{H}_kf(\lambda)=0.
\end{equation}
\item[\rm (iii)] If $R$ is crystallographic, then the interior of $C(\rho(k))\subseteq \mathfrak{a}$ is non-empty and the Cherednik transform $\mathcal{H}_kf$ is holomorphic in the interior of $\mathfrak{a}+iC(\rho(k))$.
\end{enumerate}
\end{theorem}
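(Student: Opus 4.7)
The plan is the following. For (i), the crucial observation is that the set $C(\rho(k))$ is symmetric under $\nu\mapsto -\nu$. Indeed, using $w_0R_+=-R_+$ together with the $W$-invariance of $k$ one computes $w_0\rho(R_+,k)=-\rho(R_+,k)$, so $-\rho(k)\in W.\rho(k)$, and consequently $-C(\rho(k))=C(-\rho(k))=C(\rho(k))$. Writing $\lambda=\mu+i\nu$ with $\mu\in\mathfrak{a}$ and $\nu\in C(\rho(k))$, the point $i\lambda=-\nu+i\mu$ lies in $C(\rho(k))+i\mathfrak{a}$, so Theorem \ref{HJ} yields the uniform bound $\abs{G_k(i\lambda,-x)}\le C_k$ for every $x\in\mathfrak{a}$; integrating against $\abs{f}\delta_k$ gives (i).

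For the continuity in (ii) I would apply dominated convergence to $\lambda\mapsto \int_{\mathfrak{a}}f(x)G_k(i\lambda,-x)\delta_k(x)\,\mathrm{d} x$, with dominating function $C_k\abs{f(x)}\delta_k(x)\in L^1(\mathfrak{a})$ and using the continuity (in fact holomorphy) of $\lambda\mapsto G_k(i\lambda,-x)$ from Theorem \ref{NonSpanningCherednik}. The decay \eqref{Decay} is then established by a density reduction. By part (i), $\mathcal{H}_k$ is a bounded linear map from $L^1(\mathfrak{a},\delta_k)$ into the Banach space of bounded continuous functions on $\mathfrak{a}+iC(\rho(k))$ with operator norm at most $C_k$; hence the subspace of those $f$ whose transform satisfies \eqref{Decay} is closed in $L^1(\mathfrak{a},\delta_k)$, and it suffices to verify \eqref{Decay} on the dense subspace $C_c^\infty(\mathfrak{a})$. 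For $f\in C_c^\infty(\mathfrak{a})$ the plan is to use the intertwining of $\mathcal{H}_k$ with the Cherednik operators developed in \cite{O95,S08}: integration by parts converts the action of $D_\xi(k)$ on $f$ into multiplication by a linear function of $\lambda$ in $\mathcal{H}_kf(\lambda)$ (the eigenvalue of $G_k(i\lambda,-x)$ with respect to the appropriate operator in $x$, with the sign and $w_0$-twist dictated by Proposition \ref{CherednikMinus}\,(ii)--(iii)). Iterating with $N$ operators produces a bound $\abs{\mathcal{H}_kf(\lambda)}\le C_{N,f}(1+\abs{\lambda})^{-N}$ on the strip, which implies \eqref{Decay}.

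For (iii), if $R$ is crystallographic then the simple roots span $\mathfrak{a}$, and a short computation gives $\rho(k)-s_j\rho(k)=k_{\alpha_j}\alpha_j$ for each simple reflection $s_j$; since $k>0$, the affine hull of $W.\rho(k)$ equals $\mathfrak{a}$, so $C(\rho(k))$ has non-empty interior in $\mathfrak{a}$. For holomorphy in the interior I would use Morera's theorem: by Theorem \ref{NonSpanningCherednik} the map $\lambda\mapsto G_k(i\lambda,-x)$ is entire in $\lambda$, and the uniform bound from (i) lets me apply Fubini to interchange any triangle contour integral in $\lambda$ with the $x$-integration; the inner triangle integral then vanishes by Cauchy's theorem.

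The main obstacle is the decay in (ii): the density reduction is routine given (i), but the proof on $C_c^\infty(\mathfrak{a})$ requires the precise intertwining of $\mathcal{H}_k$ with the Cherednik operators, i.e.\ the formal adjoints of $D_\xi(k)$ with respect to $\delta_k\,\mathrm{d} x$ together with the sign and $w_0$-twists inherited from the reflected kernel $G_k(i\lambda,-x)$; these computations go back to \cite{O95,S08}.
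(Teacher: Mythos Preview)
Your proposal is correct and follows the same skeleton as the paper: Theorem \ref{HJ} for the uniform bound, dominated convergence and standard parameter-integral theorems for continuity and holomorphy, and density of $C_c^\infty(\mathfrak{a})\subseteq L^1(\mathfrak{a},\delta_k)$ to reduce the decay statement. You are more explicit than the paper on two points the paper suppresses: first, the symmetry $-C(\rho(k))=C(\rho(k))$ coming from $w_0\rho(k)=-\rho(k)$, which is indeed what is needed to pass from $\lambda\in\mathfrak{a}+iC(\rho(k))$ to $i\lambda\in C(\rho(k))+i\mathfrak{a}$; and second, the actual mechanism for decay when $f\in C_c^\infty(\mathfrak{a})$, where the paper writes only ``follows\ldots from Theorem \ref{HJ}'' while you spell out the intertwining of $\mathcal{H}_k$ with Cherednik operators from \cite{O95,S08} to gain powers of $(1+\abs{\lambda})^{-1}$. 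One small correction in (iii): for a non-reduced root system the identity is $\rho(k)-s_j\rho(k)=(k_{\alpha_j}+2k_{2\alpha_j})\alpha_j$ rather than $k_{\alpha_j}\alpha_j$; since $k>0$ this still gives a nonzero multiple of each simple root and your conclusion is unaffected.
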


\begin{proof}
Part (i) is an immediate consequence of Theorem \ref{HJ}. Let $f \in L^1(\mathfrak{a},\delta_k)$. The continuity (and holomorphicity in the crystallographic case) of $\mathcal{H}_kf$ is obtained from Theorem \ref{HJ} and standard theorems on continuous and holomorphic parameter integrals. The decay \eqref{Decay} of $\mathcal{H}_kf$ follows for $f \in C_c^\infty(\mathfrak{a})$ from Theorem \ref{HJ}. For general $f \in L^1(\mathfrak{a},\delta_k)$ we obtain \eqref{Decay} by part (i), dominated convergence and the fact that $C_c^\infty(\mathfrak{a}) \subseteq L^1(\mathfrak{a},\delta_k)$ is a dense subspace.
\end{proof}

\section{Non-symmetric limit transition between $\mathrm{BC}_n$ and $\mathrm{A}_n$}
Consider inside $\R^n$ with the standard orthonormal basis $e_1,\ldots,e_n$, where $e_i$ is the vector with a $1$ at position $i$ and $0$ otherwise. We fix the root systems 
$$\mathrm{BC}_n\coloneqq \set{e_i,2e_i \mid 1\le i \le n} \cup \set{\pm(e_i\pm e_j) \mid 1\le i< j\le n} \subseteq \R^n$$
with Weyl group $W_B=\Z_2^n \ltimes \mathcal{S}_n$, where $\Z_2^n$ acts by sign changes, while $\mathcal{S}_n$ acts by permutation of the coordinates. We fix a non-negative multiplicity $\kappa=(k_1,k_2,k_3) \ge 0$, where $k_1$ is the value on $\pm e_i$, $k_2$ is the value on $\pm 2e_i$ and $k_3$ is the value on $\pm (e_i\pm e_j)$. Furthermore, we consider the positive roots
$$\mathrm{BC}_n^+=\set{e_i,2e_i \mid 1\le i \le n} \cup \set{e_i\pm e_j \mid 1\le i< j\le n}$$
with simple roots $\alpha_1,\ldots,\alpha_n$ defined by
$$\alpha_i= \begin{cases}
e_i-e_{i+1}, &1 \le i \le n-1, \\
e_n, & i=n.
\end{cases}$$
The weights and dominant weights are then given by
\begin{align*}
P^{\mathrm{BC}}&=\Z^n, \\
P^{\mathrm{BC}}_+&=\Lambda_+^n\coloneqq\set{\lambda \in \Z^n \mid \lambda_1\ge \ldots \ge \lambda_n \ge 0},
\end{align*}
respectively. The positive Weyl chamber is
$$C_+^{\mathrm{BC}}\coloneqq \set{\lambda \in \R^n \mid \lambda_1>\ldots > \lambda_n>0}$$
and the Weyl vector is
$$\rho^{\mathrm{BC}}(\kappa) = \frac{1}{2}\sum\limits_{i=1}^n (k_1+2k_2+2k_3(n-i))e_i.$$
The highest short root of $\mathrm{BC}_n$ is $\beta\coloneqq e_1$ and we write
\begin{align*}
s_0 &\coloneqq \beta+ s_\beta \; \, = x \mapsto (1-x_1,x_2,\ldots,x_n), \\
s_i &\coloneqq s_{e_i-e_{i+1}}= x \mapsto (x_1,\ldots,x_{i-1},x_{i+1},x_i,x_{i+2},\ldots,x_n), \text{ for } 1\le i< n, \\
s_n &\coloneqq s_{e_n} \quad \quad \, = x\mapsto (x_1,\ldots,x_{n-1},-x_n).
\end{align*}
We denote the space of trigonometric polynomials of type $\mathrm{BC}_n$ by $\mathcal{T}^{\mathrm{BC}}$ and the dual affine Weyl group by $W_B^{\vee,\mathrm{aff}}=\braket{s_0,\ldots,s_n}_{\text{group}}$. Then Corollary \ref{PolRec} leads to the following:

\begin{corollary}\label{InductiveHOP}
Let $(E_\mu^{\mathrm{BC}}(\kappa;\m))_{\mu \in P^{\mathrm{BC}}}$ be the non-symmetric Heckman-Opdam polynomials associated with $(\mathrm{BC}_n^+,\kappa)$. Then we have
\begin{enumerate}[itemsep=5pt, topsep=5pt]
\item[\rm (i)] The minuscle weights are $\mathcal{O}=\set{0}$.
\item[\rm (ii)] For $\mu \in P^{\mathrm{BC}}=\Z^n$ with $s_i\mu \neq \mu$ for some $0\le i \le n$, there exists $d_i=d_i(\kappa;\mu)$ with 
$$d_iE_{s_i\mu}^{\mathrm{BC}}(\kappa;\m)=(s_i+c_i(\kappa;\mu))E_\mu^{\mathrm{BC}}(\kappa;\m)$$ and the constant
$$c_i(\kappa;\mu)=\begin{dcases}
\frac{k_1}{1-2\widetilde{\mu}_1}, & i=0, \\
\tfrac{k_3}{\widetilde{\mu}_i-\widetilde{\mu}_{i+1}}, &1\le i< n, \\
\tfrac{k_1+2k_2}{2\widetilde{\mu}_{n}},  & i=n.
\end{dcases}$$
The eigenvalue vector is given by
\begin{align*}
\tilde{\mu}&=\mu + \frac{k_1+2k_2}{2}\sum\limits_{i=1}^n\epsilon(x_i)e_i + \frac{k_3}{2}\sum\limits_{1\le i<j\le n} \epsilon(x_i \pm x_j) (e_i\pm e_j).
\end{align*}
\end{enumerate}
\end{corollary}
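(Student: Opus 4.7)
The plan is to specialize Corollary \ref{PolRec} to the concrete data of the $\mathrm{BC}_n$ root system, identifying simple coroots, the highest short root, and the relevant multiplicity combinations $k_i$.

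For part (i), I would compute the minuscule weights directly from the definition $\mathcal{O}=\set{\lambda \in P^{\mathrm{BC}} \mid \braket{\lambda,\alpha^\vee}\in\set{0,1} \text{ for all } \alpha \in \mathrm{BC}_n^+}$. The key observation is that $\mathrm{BC}_n^+$ contains both $e_i$ (with coroot $2e_i$, giving $\braket{\lambda,2e_i}=2\lambda_i$) and $2e_i$ (with coroot $e_i$, giving $\braket{\lambda,e_i}=\lambda_i$). For $\lambda_i\in \Z$, the constraint $2\lambda_i\in\set{0,1}$ forces $\lambda_i=0$, so necessarily $\mathcal{O}=\set{0}$.

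For part (ii), I would apply Corollary \ref{PolRec}(ii) after identifying, for each simple (or affine) reflection, the coroot $\alpha_i^\vee$ and the combined parameter $k_i=k_{\alpha_i}+2k_{2\alpha_i}$ that enters Theorem \ref{Relation}. The highest short root is $\beta=e_1$ with $\beta^\vee=2e_1$, and $k_0=k_\beta=k_1$, giving $\braket{\beta^\vee,\widetilde{\mu}}=2\widetilde{\mu}_1$ and thus $c_0=\frac{k_1}{1-2\widetilde{\mu}_1}$. For $1\le i<n$, the root $\alpha_i=e_i-e_{i+1}$ satisfies $\alpha_i^\vee=e_i-e_{i+1}$ and $2\alpha_i\notin \mathrm{BC}_n$, so $k_i=k_3$, producing $c_i=\frac{k_3}{\widetilde{\mu}_i-\widetilde{\mu}_{i+1}}$. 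For $i=n$, the root $\alpha_n=e_n$ has $\alpha_n^\vee=2e_n$ and $2\alpha_n=2e_n\in \mathrm{BC}_n$ with multiplicity $k_2$, so $k_n=k_1+2k_2$ and $c_n=\frac{k_1+2k_2}{2\widetilde{\mu}_n}$.

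Finally, the eigenvalue formula $\widetilde{\mu}$ comes from the explicit expression in Definition \ref{DefHOPol} by splitting $\mathrm{BC}_n^+$ into its three $W_B$-orbits of positive roots. The long roots $2e_i$ with multiplicity $k_2$ contribute $\tfrac{k_2}{2}\epsilon(2\mu_i)(2e_i)=k_2\,\epsilon(\mu_i)\,e_i$, which combines with the short root contribution $\tfrac{k_1}{2}\epsilon(\mu_i)e_i$ to give the coefficient $\tfrac{k_1+2k_2}{2}$ in front of $\sum_i\epsilon(\mu_i)e_i$; the roots $e_i\pm e_j$ give the remaining sum. The only real subtlety in the whole proof — and where I would be most careful — is this bookkeeping between short and long roots: one must correctly track the factor of $2$ coming from $\braket{2e_i,2e_i}$ versus $\braket{e_i,e_i}$ in the definition of the coroot, and the corresponding rule $k_i=k_{\alpha_i}+2k_{2\alpha_i}$ from Theorem \ref{Relation}, since these are exactly where the $\mathrm{BC}_n$ case differs from a simply-laced situation.
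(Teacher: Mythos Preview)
Your proposal is correct and follows exactly the approach the paper intends: the paper simply states that Corollary~\ref{PolRec} leads to Corollary~\ref{InductiveHOP} without further details, and your specialization of the coroots, the parameters $k_i=k_{\alpha_i}+2k_{2\alpha_i}$, and the eigenvalue formula for $\widetilde{\mu}$ is precisely the bookkeeping needed to make that implication explicit. Your identification of the key subtlety---the doubled-root contribution for $\alpha_n=e_n$---is the only nontrivial point, and you handle it correctly.
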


\begin{theorem}\label{RecurrenceLimit}
Let $\trianglelefteq$ be the partial order on $P^{\mathrm{BC}}=\Z^n$ defined in Definition \ref{DefHOPol} and $\lambda \in \Z^n$. Then there exists a trigonometric polynomial
$$E_{\lambda}^{\mathrm{BC}}(\infty;k_3;\m)=e^\lambda+\sum\limits_{\mu \triangleleft \lambda} c_{\mu\lambda}(k_3)\,e^\lambda \in \mathcal{T}^{\mathrm{BC}}$$ such that for fixed $k_3 \ge 0$:
\begin{enumerate}[itemsep=5pt, topsep=5pt]
\item[\rm (i)]$\lim\limits_{\substack{k_1+k_2 \to \infty \\ k_1/k_2 \to \infty}}E_\lambda^{\mathrm{BC}}(\kappa;\m)=E_\lambda^{\mathrm{BC}}(\infty;k_3;\m)$
locally uniformly on $\C^n$, including $k_2=0$ with $k_1/k_2=\infty$.
\item[\rm (ii)] If $w=s_{i_1}\cdots s_{i_m} \in W_{B}^{\vee,\mathrm{aff}}$ is reduced and of minimal length with $w\lambda=0$, then
\begin{equation}\label{InftyRecursion}
E_{\lambda}^{\mathrm{BC}}(\infty;k_3;\m)= \begin{cases}
1, & \lambda=0, \\
(s_{i_m}+ c_m) \cdots  (s_{i_1}+c_1)\m 1, & \lambda \neq 0,
\end{cases}
\end{equation}
where $c_\ell=c_{i_\ell}(\infty;k_3;s_{i_{\ell-1}}\cdots s_{i_1}\lambda)\ge 0$ is defined via
$$c_\ell(\infty;k_3;x) =\lim\limits_{\substack{k_1+k_2 \to \infty \\ k_1/k_2 \to \infty}} c_\ell(\kappa;x)$$
with $c_\ell(\kappa;x)$ as in Corollary \ref{InductiveHOP}. 
\item[\rm (iii)] Let $\epsilon(t)=-1$ if $t \le 0$ and $\epsilon(t)=1$ if $t>0$. The limits $c_i(\infty;k_3;x)$ satisfy
$$c_i(\infty;k_3;x)=\begin{cases}
-\epsilon(x_1), &i=0, \\
\epsilon(x_n), & i=n.
\end{cases}$$
For $1\le i <n $ we have $c_i(\infty;k_3;x)=0$ if $\epsilon(x_i)\neq \epsilon(x_{i+1})$ and otherwise
$$c_i(\infty;k_3;x)=\tfrac{k_3}{x_i-x_{i+1}+\tfrac{k_3}{2}\left(\sum\limits_{j>i}\epsilon(x_i\pm x_j)+\sum\limits_{j<i}\delta(x_j, x_{i})-\sum\limits_{j>i+1}\epsilon(x_{i+1}\pm x_j)-\sum\limits_{j<i+1}\delta(x_j, x_{i+1})\right)},$$
with $\delta(x_j,x_i)=\epsilon(x_j+x_i)-\epsilon(x_j-x_i)$.
\end{enumerate}
\end{theorem}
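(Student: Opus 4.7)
The approach is to apply the reduction machinery of Corollary~\ref{PolRec}(iii) in the $\mathrm{BC}_n$-setting: since the only minuscle weight is $\mathcal{O}=\{0\}$ by Corollary~\ref{InductiveHOP}(i), every $\lambda\in\Z^n$ is reached from $0$ by a sequence of simple reflections $s_0,\ldots,s_n$ in the dual affine Weyl group $W_B^{\vee,\mathrm{aff}}$. Fixing a minimal-length reduced expression $w=s_{i_1}\cdots s_{i_m}$ with $w\lambda=0$, I would write
\[
E_\lambda^{\mathrm{BC}}(\kappa;\m)=(s_{i_m}+c_m(\kappa))\cdots(s_{i_1}+c_1(\kappa))\m 1,
\]
with $c_\ell(\kappa)=c_{i_\ell}(\kappa;\mu_{(\ell-1)})$ and $\mu_{(\ell)}=s_{i_\ell}\cdots s_{i_1}\m 0$. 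Each factor is an affine operator on $\mathcal{T}^{\mathrm{BC}}$ consisting of a permutation of exponents (plus an affine translation at $i=0$) and a scalar multiplication, so the main task reduces to controlling the limits of the scalars.

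For these I would insert the eigenvalue formula
\[
\widetilde{\mu}=\mu+\tfrac{k_1+2k_2}{2}\sum_{i=1}^n\epsilon(\mu_i)e_i+\tfrac{k_3}{2}\sum_{1\le i<j\le n}\epsilon(\mu_i\pm\mu_j)(e_i\pm e_j)
\]
into the expressions from Corollary~\ref{InductiveHOP}(ii). Both $c_0=\tfrac{k_1}{1-2\widetilde{\mu}_1}$ and $c_n=\tfrac{k_1+2k_2}{2\widetilde{\mu}_n}$ have denominators dominated by the $(k_1+2k_2)$-contribution, giving the limits $-\epsilon(\mu_1)$ and $\epsilon(\mu_n)$. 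For $1\le i<n$ the contribution $\tfrac{k_1+2k_2}{2}(\epsilon(\mu_i)-\epsilon(\mu_{i+1}))$ to $\widetilde{\mu}_i-\widetilde{\mu}_{i+1}$ vanishes exactly when $\epsilon(\mu_i)=\epsilon(\mu_{i+1})$ and grows like $k_1+2k_2$ otherwise, yielding $c_i\to 0$ in the sign-mismatched case and the $k_3$-only expression claimed in (iii) in the sign-matched case.

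With these limits in hand I would define $E_\lambda^{\mathrm{BC}}(\infty;k_3;\m)$ by the formula in part (ii). That it is a trigonometric polynomial of the stated triangular shape follows by induction on $\ell$: each operator $s_{i_\ell}+c_\ell(\infty)$ preserves the expansion $e^{\mu_{(\ell-1)}}+\text{lower terms w.r.t.\ }\trianglelefteq$, and the leading coefficient $1$ is transmitted from the $\kappa$-side where reducedness of $w$ together with Corollary~\ref{PolRec}(ii) ensures that no normalizing denominator appears. Independence of the choice of reduced word is automatic, since $E_\lambda^{\mathrm{BC}}(\kappa;\m)$ is well-defined for each $\kappa$, and therefore so is its pointwise limit. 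Local uniform convergence on $\C^n$ is then immediate: a finite product of affine operators depending continuously on $(k_1,k_2)$ applied to the constant function $1$ depends continuously on the parameter, uniformly on compact subsets.

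The principal delicate point is verifying that the denominators of $c_i(\infty;k_3;\mu_{(\ell-1)})$ do not vanish at any step of the recursion. At finite $\kappa$ non-vanishing is guaranteed by $s_{i_\ell}\mu_{(\ell-1)}\neq\mu_{(\ell-1)}$ along the reduced word together with the non-negativity $c_\ell(\kappa)\ge 0$ from Corollary~\ref{PolRec}(ii). In the sign-matched case for $1\le i<n$, $c_i(\kappa;\mu_{(\ell-1)})$ is actually independent of $(k_1,k_2)$ because the $k_1+2k_2$ contributions cancel exactly in $\widetilde{\mu}_i-\widetilde{\mu}_{i+1}$; hence its limit equals its finite-$\kappa$ value and is automatically finite and non-negative. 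This closes the argument and yields the formulas in (iii) together with the claimed convergence in (i) and (ii).
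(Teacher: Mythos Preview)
Your proposal is correct and follows essentially the same route as the paper: both arguments invoke Corollary~\ref{PolRec}(iii) (in the $\mathrm{BC}_n$-specialization of Corollary~\ref{InductiveHOP}) to write $E_\lambda^{\mathrm{BC}}(\kappa;\cdot)$ as a finite product of operators $(s_{i_\ell}+c_\ell(\kappa))$ applied to $1$, and then reduce the theorem to computing the limits of the scalars $c_i(\kappa;x)$, which you carry out exactly as the paper does. Your additional remarks on preservation of the triangular shape, independence of the reduced word, and the non-vanishing of the sign-matched denominators (via the observation that $c_i(\kappa;\mu)$ is in that case literally independent of $k_1,k_2$) are details the paper leaves implicit but which are consistent with its argument.
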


\begin{proof}
In view of the Corollaries \ref{PolRec} and \ref{InductiveHOP} it suffices to compute the limit $c_j(\infty;k_3;x)$.
\begin{itemize}[itemsep=5pt, topsep=5pt]
\item As $\widetilde{x}_1=x_1+\tfrac{1}{2}\left((k_1+2k_2)\epsilon(x_1)+k_3\sum\limits_{i>1} \epsilon(x_1\pm x_i)\right)$ we obtain
$$\frac{1-2\widetilde{x}_1}{k_1}\xrightarrow[\substack{k_1+k_2 \to \infty \\ k_1/k_2 \to \infty}]{} -\epsilon(x_1).$$ 
Thus, by $\epsilon(x_1)\in\set{\pm 1}$, we conclude $c_1(\infty;k_3;x)=\lim\limits_{\substack{k_1+k_2,\,k_1/k_2 \to \infty}}c_1(\kappa;x)= -\epsilon(x_1).$
\item Similar computations show that
$$c_n(\kappa;x)=\frac{k_1+2k_2}{2\widetilde{x}_n}\xrightarrow[\substack{k_1+k_2 \to \infty \\ k_1/k_2 \to \infty}]{} \epsilon(x_n)=c_n(\infty;k_3;x).$$
\item If $x$ satisfies $\epsilon(x_i)\neq \epsilon(x_{i+1})$ for $1\le i< n$, i.e. $\epsilon(x_{i+1})=-\epsilon(x_i)$, then
$$c_i(\kappa;x)=\frac{k_3}{\widetilde{x}_i-\widetilde{x}_{i+1}}=\frac{k_3}{x_i-x_{i+1}+(k_1+2k_2)\epsilon(x_i) + d(k_3,\mu)} \xrightarrow[\substack{k_1+k_2 \to \infty \\ k_1/k_2 \to \infty}]{} 0=c_i(\infty;k_3;x),$$
where $d(k_3,x) \in \R$ is independent of $k_1,k_2$. If conversely $\epsilon(x_i)=\epsilon(x_{i+1})$, then
$$c_i(\kappa;x)=\frac{k_3}{\widetilde{x}_i-\widetilde{x}_{i+1}}=\frac{k_3}{x_i-x_{i+1}+d(k_3,x)}=c_i(\infty,k_3;x)$$
is independent of $k_1,k_2$ and $d(k_3,x)$ is given by
\begin{align*}
d(k_3,x)&=\frac{k_3}{2}\!\left(\sum\limits_{j>i}\epsilon(x_i\pm x_j)+\sum\limits_{j<i}\delta(x_j, x_{i})-\!\!\!\sum\limits_{j>i+1}\!\!\epsilon(x_{i+1}\pm x_j)-\!\!\!\sum\limits_{j<i+1}\delta(x_j, x_{i+1})\right).
\end{align*}
\end{itemize}
\end{proof}

\begin{corollary}\label{ArbitraryWeylAction}
For $\lambda \in \Z^n$ with $s_i\lambda \neq \lambda$, there exists $d_i\coloneqq d_i(k_3;\lambda)\in \R$ with
$$d_iE_{s_i\lambda}^{\mathrm{BC}}(\infty;k_3;\m)=(s_i+c_i)E_\lambda^{\mathrm{BC}}(\infty;k_3;\m),$$
with $c_i=c_i(k_3;\lambda)$.
\end{corollary}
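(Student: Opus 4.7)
The plan is to deduce this identity directly from the finite-parameter recurrence of Corollary \ref{InductiveHOP} (ii) by passing to the limit $k_1+k_2\to\infty$, $k_1/k_2\to\infty$. For every admissible $\kappa=(k_1,k_2,k_3)$ with $s_i\lambda\neq\lambda$, Corollary \ref{InductiveHOP} (ii) provides a real constant $d_i(\kappa;\lambda)$ with
$$d_i(\kappa;\lambda)\,E_{s_i\lambda}^{\mathrm{BC}}(\kappa;\m)=(s_i+c_i(\kappa;\lambda))\,E_\lambda^{\mathrm{BC}}(\kappa;\m),$$
and the goal is to show that both this constant and the resulting identity have a meaningful limit.

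First I will invoke Theorem \ref{RecurrenceLimit}(i),(ii) to obtain locally uniform convergence $E_\lambda^{\mathrm{BC}}(\kappa;\m)\to E_\lambda^{\mathrm{BC}}(\infty;k_3;\m)$, $E_{s_i\lambda}^{\mathrm{BC}}(\kappa;\m)\to E_{s_i\lambda}^{\mathrm{BC}}(\infty;k_3;\m)$, together with $c_i(\kappa;\lambda)\to c_i:=c_i(\infty;k_3;\lambda)$. For $i=1,\ldots,n$ the action of $s_i$ is the usual coordinate reflection, while $s_0$ acts as $f\mapsto e^\beta\cdot(s_\beta f)$; both operators preserve locally uniform convergence, so the right-hand side of the finite-$\kappa$ relation converges locally uniformly to $(s_i+c_i)E_\lambda^{\mathrm{BC}}(\infty;k_3;\m)$.

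Next I will argue that $d_i(\kappa;\lambda)$ has a finite real limit. Because $E_{s_i\lambda}^{\mathrm{BC}}(\infty;k_3;\m)$ carries the dominance-leading monomial $e^{s_i\lambda}$ with coefficient $1$, it is not identically zero as a trigonometric polynomial. Choose some $x_0\in\R^n$ with $E_{s_i\lambda}^{\mathrm{BC}}(\infty;k_3;x_0)\neq 0$; for sufficiently large $\kappa$ in the prescribed regime the denominator $E_{s_i\lambda}^{\mathrm{BC}}(\kappa;x_0)$ is also nonzero, so
$$d_i(\kappa;\lambda)=\frac{[(s_i+c_i(\kappa;\lambda))E_\lambda^{\mathrm{BC}}(\kappa;\m)](x_0)}{E_{s_i\lambda}^{\mathrm{BC}}(\kappa;x_0)}$$
converges to a real number $d_i(k_3;\lambda)\in\R$ (realness follows from the fact that the polynomials and constants $c_i(\kappa;\lambda)$ are real for real $\kappa$). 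Passing to the limit on both sides of the finite-$\kappa$ identity then yields the claim.

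The only mildly delicate point is the continuity of the dual affine action of $s_0$ under locally uniform convergence, which is immediate from the factorization $s_0f=e^\beta\cdot(s_\beta f)$: both the multiplication by the continuous function $e^\beta$ and the linear substitution $s_\beta$ preserve locally uniform convergence. I do not expect any serious obstacle here; the argument is essentially a routine limit-transfer once Theorem \ref{RecurrenceLimit} is available.
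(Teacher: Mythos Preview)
Your proof is correct and follows essentially the same route as the paper: start from the finite-$\kappa$ recurrence of Corollary~\ref{InductiveHOP}(ii), use Theorem~\ref{RecurrenceLimit} to pass $E_\lambda^{\mathrm{BC}}$, $E_{s_i\lambda}^{\mathrm{BC}}$ and $c_i(\kappa;\lambda)$ to their limits, and extract the limit of $d_i(\kappa;\lambda)$ by evaluation at a point where the limiting polynomial does not vanish. The only cosmetic difference is that the paper evaluates at the specific point $x_0=0$ (where $E_{s_i\lambda}^{\mathrm{BC}}(\infty;k_3;0)\ge 1$ by nonnegativity of the expansion coefficients), whereas you pick a generic nonvanishing point; both work equally well.
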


\begin{proof}
From Theorem \ref{InductiveHOP} we have a constant $d_i(\kappa;\mu) \in \R$ with
$$d_i(\kappa;\lambda)E_{s_i\mu}^{\mathrm{BC}}(\kappa;\m)=(s_i+c_i(\kappa;\lambda))E_\lambda^{\mathrm{BC}}(\kappa;\m),$$
i.e. $d_i(\kappa;\mu)=\frac{(s_i+c_i(\kappa;\lambda))E_\lambda^{\mathrm{BC}}(\kappa;0)}{E_{s_i\lambda}^{\mathrm{BC}}(\kappa;0)}.$
Hence, the claim holds by Theorem \ref{RecurrenceLimit} with
$$d_i=\lim\limits_{\substack{k_1+k_2 \to \infty \\ k_1/k_2 \to \infty}} d_i(\kappa;\lambda) = \frac{(s_i+c_i)E_\lambda^{\mathrm{BC}}(\infty;k_3;0)}{E_{s_i\lambda}^{\mathrm{BC}}(\infty;k_3;0)}.$$
\end{proof}

We further recall the recurrence relations for non-symmetric Jack polynomials which can be found in \cite[Proposition 12.2.1, Proposition 12.2.3]{F10}, see also \cite{BR23} for some application in the context of Laplace transforms. We do not give a precise definition of the Jack polynomials here. Instead, we describe them by there recurrence relations.

\begin{proposition}\label{JackRec}
Let $(E^{\mathrm{Jack}}_\lambda(k;\m))_{\lambda \in \N_0^n}$ be the non-symmetric Jack polynomials of index $\alpha=\tfrac{1}{k}$. Then, the Jack polynomials can be constructed from $E^{\mathrm{Jack}}_0(k;\m)\equiv 1$ by the following operations:
\begin{enumerate}[itemsep=5pt, topsep=5pt]
\item[\rm (i)] $E^{\mathrm{Jack}}_{\Phi\lambda}(k;\m)=\Phi E^{\mathrm{Jack}}_\lambda(k;\m)$ with the Knop-Sahi raising operator $\Phi$ (cf. \cite{KS97}) defined on $\lambda \in \N_0^n$ and functions $f:\R^n \to \C$ by
\begin{align*}
\Phi\lambda &=(\lambda_2,\ldots,\lambda_n,\lambda_1+1), \\
\Phi f(x)&=x_nf(x_n,x_1,\ldots,x_{n-1}).
\end{align*}
\item[\rm (ii)] $E^{\mathrm{Jack}}_{s_i\lambda}(k;\m)=(s_i+\tfrac{k}{\overline{\lambda}_{i+1}-\overline{\lambda}_i})E^{\mathrm{Jack}}_\lambda(k;\m)$ for $\lambda_i<\lambda_{i+1}$ and $i=1,\ldots,n-1$ with
$$\overline{\lambda}_i=\lambda_i-k\#\set{j<i\mid \lambda_j \ge \lambda_i}-k\#\set{j>i \mid \lambda_j>\lambda_i}.$$
\end{enumerate}
\end{proposition}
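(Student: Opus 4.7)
The plan is to derive both recurrences from the intertwiner theory of the rational Cherednik algebra of type $\mathrm{A}_{n-1}$, in complete parallel with the argument behind Corollary \ref{PolRec}. This is precisely the route taken in \cite[Chapter 12]{F10} and originally in \cite{KS97}; here I outline the structure rather than verify the formulas.

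First, I recall the characterization underlying the whole argument: the non-symmetric Jack polynomial $E^{\mathrm{Jack}}_\lambda(k;\m)$ is the unique polynomial with leading monomial $x^\lambda$ (in the appropriate triangular order on $\N_0^n$) that is a joint eigenfunction of the commuting rational Cherednik operators $\xi_1,\ldots,\xi_n$ of type $\mathrm{A}_{n-1}$, with eigenvalue vector $\overline{\lambda}\in\R^n$ as in the statement. This plays the role of Definition \ref{DefHOPol} in the rational limit. The strategy for both (i) and (ii) is the same: exhibit an explicit operator that maps the $\overline{\lambda}$-joint eigenspace into the $\overline{s_i\lambda}$- resp.\ $\overline{\Phi\lambda}$-joint eigenspace, read off its action on the leading monomial of $E^{\mathrm{Jack}}_\lambda$, and conclude by uniqueness.

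For part (ii), one exploits the Hecke-type relation $s_i\xi_i-\xi_{i+1}s_i=-k$ together with $[s_i,\xi_j]=0$ for $j\notin\SET{i,i+1}$, which shows that the intertwiner $s_i+k/(\overline{\lambda}_{i+1}-\overline{\lambda}_i)$ carries any joint eigenfunction with spectrum $\overline{\lambda}$ to one with spectrum $s_i\overline{\lambda}$. Under the assumption $\lambda_i<\lambda_{i+1}$, a short inspection of the definition of $\overline{\lambda}$ gives $s_i\overline{\lambda}=\overline{s_i\lambda}$, and the leading monomial of the intertwined polynomial equals $x^{s_i\lambda}$. Uniqueness forces the claimed identity. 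For part (i), one verifies directly from the explicit formulas for $\xi_i$ and $\Phi$ the commutation relations
\begin{equation*}
\xi_1\Phi=\Phi(\xi_n+1),\qquad \xi_i\Phi=\Phi\xi_{i-1}\ \text{ for }\ 2\le i\le n.
\end{equation*}
These show that $\Phi E^{\mathrm{Jack}}_\lambda$ is a joint eigenfunction with spectrum $(\overline{\lambda}_2,\ldots,\overline{\lambda}_n,\overline{\lambda}_1+1)=\overline{\Phi\lambda}$ and leading term $x_n\cdot x^{(\lambda_2,\ldots,\lambda_n,\lambda_1)}=x^{\Phi\lambda}$; uniqueness again closes the argument.

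The step I expect to cost the most care is the verification of $\xi_1\Phi=\Phi(\xi_n+1)$, because $\Phi$ combines a cyclic coordinate permutation with multiplication by $x_n$, and the interaction of these two pieces with the Dunkl-type difference-reflection part of $\xi_1$ must be tracked carefully (the $+1$ shift in the eigenvalue appears precisely from a commutator between $x_n$ and the $x_i\partial_i$-part of $\xi_1$ after the cyclic shift). Once both intertwining relations are in place, the remainder is bookkeeping: any $\lambda\in\N_0^n$ can be reached from the zero composition by a finite sequence of moves $\mu\mapsto\Phi\mu$ and $\mu\mapsto s_i\mu$ (the latter applied only when $\mu_i<\mu_{i+1}$), and starting from the trivial case $E^{\mathrm{Jack}}_0\equiv 1$ the whole family is constructed.
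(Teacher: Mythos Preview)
The paper does not supply its own proof of this proposition: it is quoted as a known result with reference to \cite[Proposition 12.2.1, Proposition 12.2.3]{F10} and \cite{KS97}, and is used only as input for the later limit computations. Your outline is exactly the standard argument carried out in those references---intertwiners for the type-$A$ rational Cherednik operators together with the Knop--Sahi raising operator $\Phi$---so there is nothing substantive to compare.
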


\begin{remark}
Recall from Definition \ref{DefHOPol} the ordering $\trianglelefteq$ on $P^{\mathrm{BC}}=\Z^n$ defined by
$$\mu \trianglelefteq \lambda \quad \text{ iff } \quad \begin{cases}
\mu_+ \le \lambda_+, &  \lambda_+ \neq \mu_+, \\
\hspace{7pt}\lambda \le \mu, & \lambda_+=\mu_+,
\end{cases}$$
where the $\lambda_+$ is the unique element in $W_B.\lambda \cap C_+^{\mathrm{BC}}$ and $\le$ is the dominance order defined by
\begin{align*}\label{\mathrm{BC}Dominance}
\mu \le \lambda &\quad\text{ iff }\quad \lambda-\mu \in Q_+=\mathrm{span}_{\N_0}(\mathrm{BC}_n^+)=\set*{\left.a_1e_1+\sum\nolimits_{i=2}^{n}(a_i-a_{i+1})e_i \right| a_i \in \N}  \\
&\quad\text{ i.e. } \sum\limits_{i=1}^p \mu_i \le \sum\limits_{i=1}^p \lambda_i \text{ for all }p=1,\ldots,n.
\end{align*}
\end{remark}

From the previous remark one immediately verifies by short computations the following proposition:

\begin{proposition}\label{OrderingWeylAction}
Let $\mu,\lambda \in \Z^n$ with $\mu \triangleleft \lambda$.
\begin{enumerate}[itemsep=5pt, topsep=5pt]
\item[\rm (i)] If $\lambda_{i+1} < \lambda_i$, then $s_i\mu \neq \lambda$ for all $i=1,\ldots,n-1$.
\item[\rm (ii)] If $\lambda_n\ge 0$, then $s_n\mu \neq \lambda$.
\item[\rm (iii)] If $\lambda_1 \le 0$, then $s_0\mu \neq \lambda$.
\end{enumerate}
\end{proposition}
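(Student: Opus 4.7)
The plan is to argue by contradiction in each part, assuming the claimed equation holds and showing that $\mu\triangleleft\lambda$ cannot then hold.

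In parts \emph{(i)} and \emph{(ii)}, the reflections $s_1,\ldots,s_n$ lie in $W_B$, so from $s_i\mu=\lambda$ I obtain $\mu=s_i\lambda$ and in particular $\mu_+=\lambda_+$. By the definition of the extended dominance order, $\mu\triangleleft\lambda$ then reduces to $\lambda<\mu$ in the dominance order on $\Z^n$, i.e.\ $\mu-\lambda\in Q_+\setminus\{0\}$. A direct computation gives $\mu-\lambda=s_i\lambda-\lambda=-\langle\alpha_i^\vee,\lambda\rangle\,\alpha_i$. For \emph{(i)}, $\langle\alpha_i^\vee,\lambda\rangle=\lambda_i-\lambda_{i+1}>0$, so $\mu-\lambda$ is a strictly negative integer multiple of the simple root $\alpha_i$ and hence not in $Q_+$. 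For \emph{(ii)}, $\langle\alpha_n^\vee,\lambda\rangle=2\lambda_n\ge 0$: the case $\lambda_n>0$ is handled as above, and $\lambda_n=0$ forces $\mu=\lambda$, ruling out the strict inequality. Either way, $\mu\not\triangleleft\lambda$.

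Part \emph{(iii)} is the substantive case, because $s_0$ is an affine reflection and does not belong to $W_B$, hence need not preserve the $W_B$-orbit. From the explicit formula $s_0x=(1-x_1,x_2,\ldots,x_n)$, the equation $s_0\mu=\lambda$ yields $\mu_1=1-\lambda_1$ and $\mu_j=\lambda_j$ for $j\ge 2$. Since $\lambda_1\le 0$,
\[
|\mu|^2-|\lambda|^2=(1-\lambda_1)^2-\lambda_1^2=1-2\lambda_1\ge 1>0.
\]
Because $\mu_+\in W_B.\mu$ and $\lambda_+\in W_B.\lambda$, this forces $|\mu_+|>|\lambda_+|$. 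I would then invoke the standard dominance-norm comparison for crystallographic root systems: if $\eta,\sigma\in P_+$ and $\eta\le\sigma$, then $|\eta|^2\le|\sigma|^2$; this follows from $|\sigma|^2-|\eta|^2=\langle\sigma-\eta,\sigma+\eta\rangle$ together with $\sigma-\eta\in Q_+$ and $\langle\alpha_i,\sigma+\eta\rangle\ge 0$ for simple $\alpha_i$ and dominant $\sigma,\eta$. This yields $\mu_+\not\le\lambda_+$, which kills the first clause of $\trianglelefteq$; the second clause $\mu_+=\lambda_+$ is excluded by $|\mu_+|\ne|\lambda_+|$. Hence $\mu\not\triangleleft\lambda$.

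The only real obstacle is part \emph{(iii)}: because $s_0\notin W_B$, one cannot compare $\mu$ and $\lambda$ directly via the sorted dominance order, and the length inequality $|\mu|^2>|\lambda|^2$ combined with the $P_+$-norm monotonicity provides a clean workaround.
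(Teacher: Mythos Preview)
Your argument is correct. The paper does not spell out a proof of this proposition; it only remarks that the claims follow by ``short computations'' from the explicit description of the dominance order via the partial-sum inequalities $\sum_{i=1}^p \mu_i \le \sum_{i=1}^p \lambda_i$. Your treatment of parts (i) and (ii) is exactly such a short computation, phrased in root-system language via $s_i\lambda-\lambda=-\langle\alpha_i^\vee,\lambda\rangle\alpha_i$, and matches what the paper has in mind.

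For part (iii) your route is slightly more conceptual than a direct partial-sum check: rather than computing $\mu_+$ and $\lambda_+$ explicitly and comparing their partial sums, you compare Euclidean norms and invoke the standard fact that $\eta\le\sigma$ in $P_+$ forces $|\eta|\le|\sigma|$. This is a clean way to handle the affine reflection $s_0$, since it avoids any case analysis on the position of $|1-\lambda_1|$ in the sorted vector of absolute values. Both approaches are elementary and equally valid; yours has the advantage of being coordinate-free once the norm inequality is in hand.
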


%\begin{proof}
%\
%\begin{enumerate}[itemsep=5pt, topsep=5pt]
%\item Assume that $s_i\mu=\lambda$. Then $\mu \triangleleft \lambda$ is equivalent to $\lambda_i < \mu_i$ and $ \lambda_i+\lambda_{i+1}\le \mu_i+\mu_{i+1}$. But $\lambda_i=\mu_{i+1}$ and $\lambda_{i+1}=\mu_i$, which is a contradiction to $\lambda_i > \lambda_{i+1}$.
%\item Assume that $s_n\mu=\lambda$. Then $\mu \triangleleft \lambda$ is equivalent to $\lambda_n < \mu_n$. But $\lambda_n=-\mu_n$, so $\lambda_n \ge 0$ leads to a contradiction.
%\item Assume that $s_0\mu=\lambda$. Then $\mu_i=\lambda_i$ for $1<i\le n$ and $1-\mu_1=\lambda_1$. In particular, $\lambda_+ \neq \mu_+$ and therefore $\mu\triangleleft \lambda$ is equivalent to $\mu_+<\lambda_+$. By $\lambda_1\le 0$, there exists a permutation $\sigma \in \mathcal{S}_n$ with
%$$\lambda_+=\sigma (-\lambda_1,\abs{\lambda_2},\ldots,\abs{\lambda_n}) \text{ and } \mu_+=\lambda_++e_i, \; i\coloneqq\sigma(1).$$
%Therefore, $\mu_+<\lambda_+$ means that $(\lambda_+)_i+1 = (\mu_+)_i < (\lambda_+)_i$. Hence $1<0$, a contradiction.
%\end{enumerate}
%\end{proof}

\begin{definition}
The subsequent operator on $\Z^n$ plays an important role in the following
$$\widetilde{\Phi}=s_n\cdots s_0 = \eta \mapsto (\eta_2,\ldots,\eta_n,\eta_1-1).$$
This operator is related to the raising operator $\Phi$ by the equation $\widetilde{\Phi}(-\eta)=-\Phi\eta$. Hence, the operator $\widetilde{\Phi}$ will be important to characterize the limits of the $\mathrm{BC}_n$ Heckman-Opdam polynomials as Jack polynomials.
\end{definition}

\begin{lemma}\label{LimitReccurenceSteps}
The limits $(E_\lambda^{\mathrm{BC}}(\infty;k_3;\m))_{\lambda \in \Z^n}$ satisfy the following recurrence relations.
\begin{enumerate}[itemsep=5pt, topsep=5pt]
\item[\rm (i)] If $\lambda \in -\N_0^n$, then 
$$E_{\widetilde{\Phi}\lambda}^{\mathrm{BC}}(\infty;k_3;\m) = (s_n+1)s_{n-1}\cdots s_1(s_0+1)E_{\lambda}^{\mathrm{BC}}(\infty;k_3;\m).$$
\item[\rm (ii)] If $\lambda=-\eta \in -\N_0^n$ with $\lambda_{i+1}<\lambda_i$, then 
$$E_{s_i\lambda}^{\mathrm{BC}}(\infty;k_3;\m)= (s_i+\tfrac{k_3}{\overline{\eta}_{i+1}-\overline{\eta}_i})E^{\mathrm{BC}}_\lambda(\infty;k_3;\m)$$ with $\overline{\eta}$ as in Proposition \ref{JackRec} with $k=k_3$.
\item[\rm (iii)] Let $1\le i_1<....<i_\ell \le n$ be the indices with $\lambda_{i_j}>0$ and define
$$\lambda^*=(s_0 \cdots s_{i_{\ell}-1}) \cdots (s_0 \cdots s_{i_1-1})\lambda \in -\N_0^n.$$
Then we have
$$E_{\lambda}^{\mathrm{BC}}(\infty;k_3;\m) = [s_{i_1-1}\cdots s_1 (s_0+1)]\cdots [s_{i_\ell-1}\cdots s_1(s_0+1)]E_{\lambda^*}^{\mathrm{BC}}(\infty;k_3;\m),$$
with convention
$s_j\cdots s_1(s_0+1)=1$ if $j=0$.
\end{enumerate}
\end{lemma}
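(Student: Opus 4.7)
All three parts are proved by iterating the single-step recurrence of Corollary \ref{ArbitraryWeylAction} along an explicit path through $W_B^{\vee,\mathrm{aff}}$, evaluating each coefficient by the closed formula for $c_i(\infty;k_3;x)$ in Theorem \ref{RecurrenceLimit}(iii). The bookkeeping reduces to tracking, at each intermediate weight, the signs of the coordinates, since $c_i$ either vanishes (when the coordinates at positions $i,i{+}1$ have opposite $\epsilon$-signs) or takes an explicit nontrivial value otherwise. The normalization constants $d_i(\infty;k_3;\mu)$ from Corollary \ref{ArbitraryWeylAction} are handled by the following trick: fix a reduced minimal expression $w = s_{j_1}\cdots s_{j_r}$ in $W_B^{\vee,\mathrm{aff}}$ for $w\lambda = 0$ (for the base weight $\lambda$ on the right of each identity), check that the chosen path extends it to a reduced minimal expression for the target weight on the left, and invoke Theorem \ref{RecurrenceLimit}(ii), which produces both sides in fully normalized form so that the $d_i$'s disappear.

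For part (i), I trace the path $\lambda \to s_0\lambda \to s_1 s_0 \lambda \to \cdots \to s_n \cdots s_0 \lambda = \widetilde{\Phi}\lambda$. Since $\lambda \in -\N_0^n$, the vector $s_0\lambda = (1-\lambda_1,\lambda_2,\dots,\lambda_n)$ has a unique positive coordinate $1-\lambda_1 \ge 1$ which is then carried to the last slot by the transpositions $s_1,\dots,s_{n-1}$. Thus whenever $s_i$ (with $1 \le i \le n-1$) acts, the two coordinates being compared have opposite $\epsilon$-signs, so Theorem \ref{RecurrenceLimit}(iii) yields $c_0 = -\epsilon(\lambda_1) = 1$, $c_1 = \cdots = c_{n-1} = 0$ and $c_n = \epsilon(1-\lambda_1) = 1$; the product of single-step operators collapses to $(s_n+1)s_{n-1}\cdots s_1(s_0+1)$. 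Part (ii) is the single-step application of Corollary \ref{ArbitraryWeylAction} at an index $i \in \{1,\dots,n-1\}$; since $\lambda = -\eta$ has $\epsilon(\lambda_i) = \epsilon(\lambda_{i+1}) = -1$, the relevant formula in Theorem \ref{RecurrenceLimit}(iii) is the fractional one. Substituting $\lambda_j = -\eta_j$ turns each $\epsilon(\lambda_i \pm \lambda_j)$ and $\delta(\lambda_j,\lambda_i)$ into $\pm 1$ depending on how $\eta_j$ compares with $\eta_i$ (resp.\ $\eta_{i+1}$), and a careful count of the four resulting sums rewrites the denominator $\lambda_i - \lambda_{i+1} + \tfrac{k_3}{2}(\cdots)$ as $\overline{\eta}_{i+1}-\overline{\eta}_i$, with $\overline{\eta}$ exactly as in Proposition \ref{JackRec}.

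Part (iii) is obtained by iterating the analysis of (i): the defining factorization writes $\lambda = (s_{i_1-1}\cdots s_0) \cdots (s_{i_\ell-1}\cdots s_0)\,\lambda^*$ starting from $\lambda^* \in -\N_0^n$, and each block $s_{i_j-1}\cdots s_0$ is a truncation of the path in (i) (stopping before the final sign flip by $s_n$). At the start of each block the first coordinate of the current intermediate weight is nonpositive, so $c_0 = 1$ and the leading operator is $(s_0+1)$; the positive entry inserted at position $1$ by $s_0$ is then shifted rightward by $s_1,\dots,s_{i_j-1}$ and at every step faces a nonpositive neighbour, so the corresponding $c_\ell$ vanish and the block simplifies to $s_{i_j-1}\cdots s_1(s_0+1)$. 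Composing the $\ell$ blocks gives the claimed formula. The principal difficulty is the combinatorial simplification in part (ii), which requires a case analysis on how each $\eta_j$ lies relative to the pair $\{\eta_i,\eta_{i+1}\}$; the sign tracking in (i) and (iii) and the reduced-expression elimination of the $d_i$'s are then routine.
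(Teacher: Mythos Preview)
Your evaluation of the coefficients $c_i(\infty;k_3;\cdot)$ along the chosen paths, and the combinatorial identification of the denominator in part (ii) with $\overline{\eta}_{i+1}-\overline{\eta}_i$, match what the paper does. The genuine difference is in how the normalization constants $d_i$ from Corollary~\ref{ArbitraryWeylAction} are shown to equal $1$.

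The paper does \emph{not} argue via minimal reduced expressions. Instead it exploits the triangular expansion $E_\mu^{\mathrm{BC}}(\infty;k_3;\cdot)=e^\mu+\sum_{\nu\triangleleft\mu}c_{\nu\mu}e^\nu$ from Theorem~\ref{RecurrenceLimit} together with Proposition~\ref{OrderingWeylAction}: at each step one checks the hypothesis of that proposition (e.g.\ $\lambda_1\le 0$ for $s_0$, $\lambda^*_{i+1}<\lambda^*_i$ for $s_i$), which guarantees that no lower term on the right-hand side of $d_iE_{s_i\mu}=(s_i+c_i)E_\mu$ can contribute to the monomial $e^{s_i\mu}$. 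Comparing that single coefficient then forces $d_i=1$. You never mention Proposition~\ref{OrderingWeylAction} at all, and your mechanism for eliminating the $d_i$'s is different in kind.

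Your proposed mechanism---embedding the path into a shortest reduced word to $0$ and invoking Theorem~\ref{RecurrenceLimit}(ii) on both sides---would work, but the step you label ``routine'' is precisely the nontrivial one: you must show that appending $s_0,s_1,\dots,s_n$ (resp.\ each block $s_0,\dots,s_{i_j-1}$) to a shortest expression for $\lambda$ yields a shortest expression for the target weight. You assert this without verification. In practice that check amounts, step by step, to the inequalities $\langle\beta^\vee,\mu\rangle<1$ (for $s_0$) and $\langle\alpha_i,\mu\rangle>0$ (for $s_i$, $i\ge 1$) at the current intermediate weight $\mu$; these are exactly the hypotheses in Corollary~\ref{PolRec}(ii) under which the normalizing constant equals $1$ already at finite~$\kappa$, so one could bypass the minimal-expression language entirely and cite that corollary directly. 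Either way, the verification should be written out rather than declared routine; the paper's coefficient-comparison argument via Proposition~\ref{OrderingWeylAction} avoids this detour and is more self-contained.
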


\begin{proof}
The most important argument in this proof is based on the triangular form
$$E_\lambda^{\mathrm{BC}}(\infty;k_3;\m)=e^\lambda+\sum\limits_{\mu \triangleleft \lambda} c_{\mu\lambda}(k_3)e^\mu,$$
from Theorem \ref{InductiveHOP}.
\begin{enumerate}[itemsep=5pt, topsep=5pt]
\item[\rm (i)] We will divide this part into several steps.
\begin{itemize}[itemsep=5pt, topsep=5pt]
\item Since $\lambda_1 \le 0$, we can conclude that $\epsilon(\lambda_1)=-1$ and therefore due to Corollary \ref{ArbitraryWeylAction}
\begin{equation}\label{Eq1a}
d_0E_{s_0\lambda}^{\mathrm{BC}}(\infty,k_3;\m)= (s_0+1)E_{\lambda}^{\mathrm{BC}}(\infty,k_3;\m)
\end{equation}
for some constant $d_0$. According to Proposition \ref{OrderingWeylAction} (iii), we can compare the coefficients of $e^{s_0\lambda}$: the left hand side of \eqref{Eq1a} has coefficient $d_0$ and the right hand side $1$, i.e. $d_0=1$.
\item Consider $i=1,\ldots, n-1$. Then $\lambda^*\coloneqq s_{i-1}\dots s_0\lambda$ is given by
$$\lambda^*=(\lambda_2,\ldots,\lambda_{i-1},1-\lambda_1,\lambda_{i+1},\ldots,\lambda_n).$$
Thus, $\epsilon(\lambda_i^*)=1=-\epsilon(\lambda^*_{i+1})$. Proceeding by induction over $i=1,\ldots,n-1$ this leads by Corollary \ref{ArbitraryWeylAction} to 
\begin{equation}\label{Eq2a}
d_iE_{s_i\lambda^*}^{\mathrm{BC}}(\infty,k_3;\m)=s_iE_{\lambda^*}^{\mathrm{BC}}(\infty,k_3;\m)
\end{equation}
for some constant $d_i$. According to Proposition \ref{OrderingWeylAction} (part (i), since $\lambda^*_{i+1}\le 0 < \lambda_i^*$) we can compare the coefficients of $e^{s_i\lambda^*}$: the left hand side of \eqref{Eq2a} has coefficient $d_i$ and the right hand side $1$, i.e. $d_i=1$.
\item Let $\lambda^*=s_{n-1}\cdots s_0\lambda=s_n\tilde{Phi} \lambda$, then $\lambda^*_n=1-\lambda_1$, i.e. $\epsilon(\lambda^*_n)=1$. Hence, Corollary \ref{ArbitraryWeylAction} gives
\begin{equation}\label{Eq3a}
d_nE_{\widetilde{\Phi}\lambda}^{\mathrm{BC}}(\infty,k_3;\m)=(s_n+1)E_{\lambda^*}^{\mathrm{BC}}(\infty,k_3;\m)
\end{equation}
for some constant $d_n$. According to Proposition \ref{OrderingWeylAction} (ii),  we can compare the coefficients of $e^{s_n\lambda^*}$: the left hand side of \eqref{Eq3a} has coefficient $d_n$ and the right hand side $1$, i.e. $d_n=1$.
\end{itemize}
The assertion follows by combining these three steps.
\item[\rm (ii)]  Consider $\lambda=-\eta \in -\N_0^n$ with $\lambda_i>\lambda_{i+1}$, i.e. $\eta_i<\eta_{i+1}$. 
\begin{itemize}[itemsep=5pt, topsep=5pt]
\item We claim that
\begin{equation}\label{ReflectionRecurrence}
\begin{split}
\overline{\eta}_{i+1}-\overline{\eta}_i &=  \lambda_i-\lambda_{i+1}+\tfrac{k_3}{2}\Big(\sum\limits_{j>i}\epsilon(\lambda_i\pm\lambda_j)+\sum\limits_{j<i}\delta(\lambda_j,\lambda_{i}) \\
&\quad -\sum\limits_{j>i+1}\epsilon(\lambda_{i+1}\pm\lambda_j)-\sum\limits_{j<i+1}\delta(\lambda_j,\lambda_{i+1})\Big),
\end{split}
\end{equation}
with $\delta$ as in Corollary \ref{ArbitraryWeylAction}. According to $\lambda=-\eta \in \N_0^n$ and $\eta_i<\eta_{i+1}$, we have 
\begin{align*}
\sum\limits_{j>i}&\epsilon(\lambda_i\pm\lambda_j)+\sum\limits_{j<i}\delta(\lambda_j,\lambda_{i}) -\sum\limits_{j>i+1}\epsilon(\lambda_{i+1}\pm\lambda_j)-\sum\limits_{j<i+1}\delta(\lambda_j,\lambda_{i+1}) \\
&= -(n-i)+\#\set{j>i \mid \lambda_i>\lambda_j}-\#\set{j>i \mid \lambda_i \le \lambda_j} \\
&\quad -(i-1)+\#\set{j<i \mid \lambda_j\le \lambda_i}-\#\set{j<i\mid \lambda_j>\lambda_i} \\
&\quad +(n-(i+1))-\#\set{j>i+1 \mid \lambda_{i+1}>\lambda_j}-\#\set{j>i+1 \mid \lambda_{i+1} \le \lambda_j} \\
&\quad +i+\#\set{j<i+1 \mid \lambda_j\le \lambda_{i+1}}-\#\set{j<i+1\mid \lambda_j>\lambda_{i+1}}  \\
&=\#\set{j>i\mid \eta_j>\eta_i}+\#\set{j<i\mid \eta_j\ge \eta_i} \\
&\quad -\#\set{j>i+1 \mid \eta_j>\eta_{i+1}}-\#\set{j<i+1 \mid \eta_j\ge \eta_{i+1}} \\
&\quad -\#\set{j>i\mid \eta_j\le \eta_i}+\#\set{j>i+1\mid \eta_j \le \eta_{i+1}} \\
&\quad -\#\set{j<i\mid \eta_j<\eta_i}+\#\set{j<i+1\mid \eta_j<\eta_{i+1}} \\
&=2 \m \Big(\#\set{j>i\mid \eta_j>\eta_i}+\#\set{j<i\mid \eta_j\ge \eta_i} \\
&\quad -\#\set{j>i+1 \mid \eta_j>\eta_{i+1}}-\#\set{j<i+1 \mid \eta_j\ge \eta_{i+1}}\Big).
\end{align*}
From here it is immediate that equation \eqref{ReflectionRecurrence} is true.
\item By Corollary \ref{ArbitraryWeylAction} and \eqref{ReflectionRecurrence} there exists a constant $d_i$ with
\begin{equation}\label{Eq4}
d_iE_{s_i\lambda}^{\mathrm{BC}}(\infty,k_3;\m)=(s_i+\tfrac{k_3}{\overline{\eta}_{i+1}-\overline{\eta}_i})E_{\lambda}^{\mathrm{BC}}(\infty,k_3;\m).
\end{equation}
According to Proposition \ref{OrderingWeylAction} (ii), we can compare the coefficients of $e^{s_i\lambda}$: the left hand side of \eqref{Eq4} has coefficient $d_i$ and the right hand side $1$, i.e. $d_i=1$. 
\end{itemize}
\item[\rm (iii)] Assume that $\lambda \in \Z^n$ has exactly $\ell$ positive entries, namely for $1\le i_1 < \ldots < i_\ell \le n$ with $\lambda_{i_j}>0$. We define
$$\lambda'\coloneqq s_0\cdots s_{i_1-1}\lambda = (1-\lambda_{i_1},\lambda_2,\ldots,\lambda_{i_1-1},\lambda_{i_1+1},\ldots,\lambda_n).$$
In particular, $\lambda'_j \le 0$ for $1\le j < i_2$. By the proof of part (i) (the first two steps of the proof of part (i)) we have
$$E_{\lambda}^{\mathrm{BC}}(\infty;k_3;\m) = E_{s_{i_1-1}\cdots s_0\lambda'}^{\mathrm{BC}}(\infty;k_3;\m) = s_{i_1-1}\cdots s_1 (s_0+1)E_{\lambda'}^{\mathrm{BC}}(\infty;k_3;\m).$$
Since $\lambda'$ has precisely $\ell-1$ we can proceed by induction to obtain the claimed formula.
\end{enumerate}
\end{proof}

\begin{theorem}\label{BCJack}
For $x \in \R^n$ and $f:\R \to \R$ define $f(x)\coloneqq (f(x_1),\ldots,f(x_n)) \in \R^n$ componentwise. 
The limits $(E_\lambda^{\mathrm{BC}}(\infty;k_3;\m))_{\lambda \in \N_0^n}$ are explicitly given by
\begin{enumerate}[itemsep=5pt, topsep=5pt]
\item[\rm (i)] If $\eta \in \N_0^n$, then $E_{-\eta}^{\mathrm{BC}}(\infty;k_3;x) = 4^{\abs{\eta}}E_\eta^{\mathrm{Jack}}(k_3;\cosh^2(\tfrac{x}{2}))$. 
\item[\rm (ii)] If $\lambda \in \Z^n\setminus(- \N_0^n)$ and $1\le i_1<\ldots <i_\ell \le n$ are the indices with $\lambda_{i_j}>0$. Then for
$$\lambda^*=(s_0 \cdots s_{i_{\ell}-1}) \cdots (s_0 \cdots s_{i_1-1})\lambda = (1-\lambda_{i_\ell},\ldots, 1-\lambda_{i_1},\lambda') \in -\N_0^n,$$
where $\lambda' \in \N_0^{n-\ell}$ is the vector $\lambda$ with deleted entries $\lambda_{i_j}$, it holds
\begin{align*}
E_\lambda^{\mathrm{BC}}(\infty;k_3;\m) &= (e^{x_{i_1}}+1)\cdots(e^{x_{i_\ell}}+1)E_{\lambda^*}^{\mathrm{BC}}(\infty;k_3;x_{i_\ell},\ldots,x_{i_1},x') \\
&= 4^{\abs{\lambda}-\ell}(e^{x_{i_1}}+1)\cdots(e^{x_{i_\ell}}+1)E_{-\lambda^*}^{\mathrm{Jack}}(k_3;\cosh^2(\tfrac{x_{i_\ell}}{2},\ldots,\tfrac{x_{i_1}}{2},\tfrac{x'}{2})),
\end{align*}
where $x' \in \R^{n-\ell}$ is the vector with deleted entries $x_{i_j}$. Moreover, $E_\lambda^{\mathrm{BC}}$ is $\Z_2$-invariant in the variables $x_j$ with $j \neq i_1,\ldots,i_\ell$.  
\item[\rm (iii)] In the situation of part (ii) it holds
$$\frac{1}{2^n}\sum\limits_{\tau \in \Z^n} E_{\lambda}^{\mathrm{BC}}(\infty;k_3;\tau x) = 4^{\abs{\lambda}-\tfrac{\ell}{2}}\prod\limits_{j=1}^\ell \cosh^2(\tfrac{x_{i_j}}{2}) \m E_{-\lambda^*}^{\mathrm{Jack}}(k_3;\cosh^2(\tfrac{\sigma_\lambda x}{2})),$$
with $\sigma_\lambda = (s_1\cdots s_{i_\ell-1})\cdots (s_1\cdots s_{i_1-1})$, i.e. $\sigma_\lambda x= (x_{i_\ell},\ldots,x_{i_1},x')$. This formula also makes sense for $\lambda \in -\N_0^n$ with $\lambda^*=\lambda$ and $\sigma_\lambda=1$ by part (i).
\item[\rm (iv)] For $\lambda \in \Z^n$ it holds for $1\le i_1<\ldots<i_\ell\le n$ the indices with $\lambda_{i_j}>0$
$$\frac{1}{2^n}\sum\limits_{\tau \in \Z^n} E_{\lambda}^{\mathrm{BC}}(\infty;k_3;\tau x) = 4^{\abs{\lambda}-\tfrac{\ell}{2}}E_{-\lambda^{**}}^{\mathrm{Jack}}(k_3;\sigma_{\lambda}^*\cosh^2(\tfrac{x}{2})),$$
where $\lambda^{**}=(\lambda',-\lambda_{i_\ell},\ldots, -\lambda_{i_1})$ and $\sigma_\lambda^*=(s_{i_1}\cdots s_{n-1})\cdots (s_{i_\ell}\cdots s_{n-1})$, \\
i.e. $\sigma_\lambda^* x= (x',x_{i_\ell},\ldots,x_{i_1})$, where $x'$ and $\lambda'$ are choosen as before.
\end{enumerate}
\end{theorem}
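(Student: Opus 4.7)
The plan is to establish the four parts in order, the foundation being (i), from which (ii) follows via Lemma \ref{LimitReccurenceSteps}(iii), and (iii)--(iv) are symmetrization/re-expression consequences.

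\textbf{Part (i).} I proceed by induction on $|\eta|$, using that $\mathbb{N}_0^n$ is generated from $0$ by the Knop--Sahi operator $\Phi$ and the simple transpositions $s_i$ ($1\le i\le n-1$) applied when $\eta_i<\eta_{i+1}$. Under the substitution $y_i=\cosh^2(x_i/2)$, the simple reflections $s_i$ act identically on $x$ and $y$ (cosh is even). The key computation is that for a function $g(y_1,\ldots,y_n)$, setting $f(x)=g(\cosh^2(x_1/2),\ldots,\cosh^2(x_n/2))$, one finds
\[
\bigl((s_n+1)s_{n-1}\cdots s_1(s_0+1)f\bigr)(x)=4\,y_n\, g(y_n,y_1,\ldots,y_{n-1})\big|_{y_i=\cosh^2(x_i/2)}=4(\Phi g)\big|_{y_i=\cosh^2(x_i/2)},
\]
because $(s_0+1)f(x)=(e^{x_1}+1)f(x)$ by $x_1$-symmetry, and $(s_n+1)$ produces the factor $e^{x_n}+e^{-x_n}+2=4\cosh^2(x_n/2)$. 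Thus Lemma \ref{LimitReccurenceSteps}(i) matches the raising step $\Phi$ of Proposition \ref{JackRec}, while Lemma \ref{LimitReccurenceSteps}(ii) matches the reflection step with identical coefficient $k_3/(\overline{\eta}_{i+1}-\overline{\eta}_i)$. The factor of $4$ in each raising step accounts for $|\Phi\eta|=|\eta|+1$, so the power $4^{|\eta|}$ emerges correctly.

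\textbf{Part (ii).} Starting from Lemma \ref{LimitReccurenceSteps}(iii), I induct on $\ell$. By (i), $E_{\lambda^*}^{\mathrm{BC}}(\infty;k_3;\cdot)$ is $\mathbb{Z}_2^n$-invariant, so at each stage $(s_0+1)$ acts on an $x_1$-symmetric function as multiplication by $e^{x_1}+1$; then $s_1\cdots s_{i-1}$ cyclically shifts so that $(s_{i-1}\cdots s_1(s_0+1))f(x)=(e^{x_i}+1)f(x_i,x_1,\ldots,x_{i-1},x_{i+1},\ldots,x_n)$. Applying the $\ell$ blocks in the order $j=\ell,\ell-1,\ldots,1$ (innermost first) and checking that the intermediate functions remain symmetric in the current $x_1$-slot (which holds because the non-$i_j$ variables always sit in the symmetric part) accumulates the product $\prod_j(e^{x_{i_j}}+1)$ and produces the permutation $(x_{i_\ell},\ldots,x_{i_1},x')$ in the argument of $f_0$. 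Invoking (i) on $E_{\lambda^*}^{\mathrm{BC}}$ with $|-\lambda^*|=|\lambda|-\ell$ yields the claimed formula, and the $\mathbb{Z}_2$-invariance in the $x'$-coordinates is inherited from the cosh-structure of $f_0$.

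\textbf{Part (iii).} I symmetrize the formula from (ii) over $\mathbb{Z}_2^n$. Sign changes on the coordinates indexed by $j\notin\{i_1,\ldots,i_\ell\}$ do nothing, both because $E_{\lambda^*}^{\mathrm{BC}}$ is $\mathbb{Z}_2^n$-invariant and by the last sentence of (ii). For the remaining $\tau_{i_j}\in\{\pm1\}$ only the product $\prod_j(e^{\tau_{i_j}x_{i_j}}+1)$ depends on $\tau$; averaging gives
\[
\prod_{j=1}^{\ell}\tfrac{1}{2}\bigl((e^{x_{i_j}}+1)+(e^{-x_{i_j}}+1)\bigr)=\prod_{j=1}^{\ell}2\cosh^2(x_{i_j}/2).
\]
Combining the resulting $2^\ell$ with the $4^{|\lambda|-\ell}$ from (i), applied to $E_{\lambda^*}^{\mathrm{BC}}$, produces $4^{|\lambda|-\ell/2}$ as required.

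\textbf{Part (iv).} I reduce (iv) to (iii) via the identity $\Phi^\ell(-\lambda^*)=-\lambda^{**}$, where $\Phi$ is the Knop--Sahi raising operator. Indeed $\Phi$ shifts the first entry to the end and adds $1$, so applying it $\ell$ times to $-\lambda^*=(\lambda_{i_\ell}-1,\ldots,\lambda_{i_1}-1,-\lambda')$ cancels the $-1$'s and produces $(-\lambda',\lambda_{i_\ell},\ldots,\lambda_{i_1})=-\lambda^{**}$. Iterating $\Phi f(y)=y_n f(y_n,y_1,\ldots,y_{n-1})$ yields $\Phi^\ell f(y)=y_n\cdots y_{n-\ell+1}f(y_{n-\ell+1},\ldots,y_n,y_1,\ldots,y_{n-\ell})$. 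Evaluating this at $y=\sigma_\lambda^*\cosh^2(x/2)=(\cosh^2(x'/2),\cosh^2(x_{i_\ell}/2),\ldots,\cosh^2(x_{i_1}/2))$ identifies the product $y_n\cdots y_{n-\ell+1}$ with $\prod_j\cosh^2(x_{i_j}/2)$ and the shifted argument with $\cosh^2(\sigma_\lambda x/2)$, so (iv) agrees with (iii).

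The main obstacle throughout is the permutation bookkeeping: verifying in (ii) that $\mathbb{Z}_2$-invariance in the currently-active $x_1$-slot is preserved through each of the $\ell$ recursive steps (so that each $(s_0+1)$ genuinely acts as multiplication by $e^{x_1}+1$), and in (iv) matching the cyclic structure of $\Phi^\ell$ with the permutations $\sigma_\lambda$ and $\sigma_\lambda^*$. Once these permutation identities are checked, the rest is mechanical bookkeeping of powers of $4$ via $|{-}\lambda^*|=|\lambda|-\ell$.
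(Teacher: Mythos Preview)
Your proposal is correct and follows essentially the same route as the paper. Part (i) matches the paper's inductive argument via Lemma \ref{LimitReccurenceSteps}(i),(ii) and the $\Phi$--$(s_n+1)s_{n-1}\cdots s_1(s_0+1)$ correspondence; part (ii) is exactly the unwinding of Lemma \ref{LimitReccurenceSteps}(iii) that the paper declares ``immediate''; part (iii) is the same $\Z_2^n$-averaging computation; and for part (iv) your explicit observation $\Phi^\ell(-\lambda^*)=-\lambda^{**}$ together with the closed formula for $\Phi^\ell f$ is precisely what the paper means by ``inductively constructed from part (iii)'' via the Knop--Sahi relation $E_{\Phi\eta}^{\mathrm{Jack}}=\Phi E_\eta^{\mathrm{Jack}}$.
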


\begin{proof}
\
\begin{enumerate}[itemsep=5pt, topsep=5pt]
\item[\rm (i)] By Proposition \ref{JackRec} the family $f_\eta(x) \coloneqq 4^{\abs{\eta}}E_\eta^{\mathrm{Jack}}(k_3;\cosh^2(\tfrac{x}{2})),\eta \in \N_0^n$ is uniquely determined by
\begin{align*}
f_0(x)&=1 \\
f_{\Phi\eta}(x) &=4\cosh^2(\tfrac{x_n}{2})f_\eta(x_n,x_1,\ldots,x_{n-1}) \\
f_{s_i\eta}(x) &= (s_i+\tfrac{k_3}{\overline{\eta}_{i+1}-\overline{\eta}_i})f_\eta(x), \text{ if }  \eta_i<\eta_{i+1}.
\end{align*}
Hence, we prove that $g_\eta\coloneqq E_{-\eta}^{\mathrm{BC}}(\infty;k_3;\m),\eta \in \N_0^n$ satisfy the same equations. 
For $\eta=0$ the equation is immediate. Assume that the equations hold for $\eta \in \N_0^n$, in particular $g_\eta$ is $\Z_2^n$ invariant.
\begin{itemize}[itemsep=5pt, topsep=5pt]
\item $s_0$ acts on $\mathcal{T}^{\mathrm{BC}}$ by $s_0f(x)=e^{x_1}f(-x_1,x_2,\ldots,x_n)$ and therefore by the induction hypothesis and Lemma \ref{LimitReccurenceSteps}
\begin{align*}
g_{\Phi\eta}(x) &= E_{-\Phi\eta}^{\mathrm{BC}}(\infty;k_3;x) = E_{\widetilde{\Phi}(-\eta)}^{\mathrm{BC}}(\infty;k_3;x) \\
&= (s_n+1)s_{n-1}\cdots s_1(s_0+1)E_{-\eta}^{\mathrm{BC}}(\infty;k_3;x) \\
&= (s_n+1)s_{n-1}\cdots s_1 \left(e^{x_1}g_\eta(-x_1,x_2,\ldots,x_n)+g_\eta(x_1,\ldots,x_n) \right) \\
&= (s_n+1)s_{n-1}\cdots s_1 (e^{x_1}+1)g_\eta(x) \\
&= (s_n+1)(e^{x_n}+1)g_\eta(x_n,x_1,\ldots,x_{n-1}) \\
&= (e^{-x_n}+1)g_\eta(-x_n,x_1,\ldots,x_{n-1}) + (e^{x_n}+1)g_\eta(x_n,x_1,\ldots,x_{n-1}) \\
&= 4\cosh^2(\tfrac{x_n}{2}) g_\eta(x_n,x_1,\ldots,x_{n-1}).
\end{align*}
\item If $\eta_i<\eta_{i+1}$, then $(-\eta)_{i+1}<(-\eta)_i$ and therefore by the induction hypothesis and Lemma \ref{LimitReccurenceSteps}
\begin{align*}
g_{s_i\eta}(x) &= E_{s_i(-\eta)}^{\mathrm{BC}}(\infty;k_3;\m)  = (s_i+\tfrac{k_3}{\overline{\eta}_{i+1}+\overline{\eta}_i})E_{-\eta}^{\mathrm{BC}}(\infty;k_3;x) \\
&=(s_i+\tfrac{k_3}{\overline{\eta}_{i+1}+\overline{\eta}_i})g_\eta(x).
\end{align*}
\end{itemize}
Thus we conclude $g_\eta=f_\eta$ for all $\eta \in \N_0^n$.
\item[\rm (ii)] This is immediate from Lemma \ref{LimitReccurenceSteps}.
\item[\rm (iii)] Since $E_{\lambda^*}^{\mathrm{BC}}(\infty;k_3;\m)$ is $\Z_2^n$-invariant, this formula is immediate from parts (i)+(ii) together with $\abs{\lambda^*}=\abs{\lambda}-\ell$ and
\begin{align*}
\tfrac{1}{2^n}\sum\limits_{\tau \in \Z_2^n} (e^{(\tau x)_{i_1}}+1)\cdots(e^{(\tau x)_{i_\ell}}+1) &= \tfrac{1}{2^\ell} (e^{x_{i_1}}+e^{-x_{i_1}}+2) \cdots (e^{x_{i_n}}+e^{-x_{i_n}}+2) \\
&= 2^{\ell} \cosh^2(\tfrac{x_{i_1}}{2})\cdots \cosh^2(\tfrac{x_{i_\ell}}{2}).
\end{align*}
\item[\rm (iv)] This can be inductively constructed from part (iii), since the Jack polynomials satisfy
\begin{align*}
&\quad \quad E_{\Phi\eta}^{\mathrm{Jack}}(k_3;y)=\Phi E_\eta^{\mathrm{Jack}}(k_3;y),\\
&\text{ i.e. } E_{(\eta_2,\ldots,\eta_n,\eta_1+1)}^{\mathrm{Jack}}(k_3;y) = y_nE_\eta^{\mathrm{Jack}}(k_3;y_n,y_1,\ldots,y_{n-1}).
\end{align*}
\end{enumerate}
\end{proof}

The following result was already proven in \cite[Theorem 4.2]{RKV13}, but we give an independent proof, based on the results for the non-symmetric setting.

\begin{theorem}
Let $(P_\lambda^{\mathrm{Jack}}(k_3;\m))_{\lambda \in \Lambda_+^n}$ be the symmetric Jack polynomials of index $\alpha=\tfrac{1}{k_3}$, cf. \cite[Section 12.6]{F10}. Then for fixed $k_3\ge 0$ and $\kappa=(k_1,k_2,k_3)\ge 0$ the symmetric Heckman-Opdam polynomials $(P^{\mathrm{BC}}_{\lambda}(\kappa;\m))_{\lambda \in \Lambda_+^n}$ satisfy the limit transition
$$P^{\mathrm{BC}}_{\lambda}(\kappa;z) \xrightarrow[\substack{k_1+k_2 \to \infty \\ k_1/k_2 \to \infty}]{} 4^{\abs{\lambda}}P_\lambda^{\mathrm{Jack}}(k_3;\cosh^2(\tfrac{z}{2})),$$
locally uniformly in $z \in \C^n$. 
\end{theorem}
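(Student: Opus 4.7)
The plan is to reduce the symmetric statement to its non-symmetric counterpart via \eqref{SymNonSym}, pass to the limit termwise using Theorem \ref{RecurrenceLimit}, and then identify the resulting symmetric function as $4^{|\lambda|}P^{\mathrm{Jack}}_\lambda(k_3;\cosh^2(z/2))$ via Theorem \ref{BCJack} together with a symmetrization identity for the antidominant non-symmetric Jack polynomial.

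First I would apply \eqref{SymNonSym} to the root system $\mathrm{BC}_n$ to write
\[
P^{\mathrm{BC}}_\lambda(\kappa;z) = \frac{\#(W_B.\lambda)}{\#W_B}\sum_{w\in W_B}E^{\mathrm{BC}}_\lambda(\kappa;wz).
\]
Since $W_B$ is finite and Theorem \ref{RecurrenceLimit}(i) gives locally uniform convergence $E^{\mathrm{BC}}_\lambda(\kappa;\cdot)\to E^{\mathrm{BC}}_\lambda(\infty;k_3;\cdot)$ on $\C^n$ (a property preserved by precomposition with the linear $W_B$-action), the whole sum converges locally uniformly, so
\[
\lim_{\substack{k_1+k_2\to\infty\\ k_1/k_2\to\infty}}P^{\mathrm{BC}}_\lambda(\kappa;z) = \frac{\#(W_B.\lambda)}{\#W_B}\sum_{w\in W_B}E^{\mathrm{BC}}_\lambda(\infty;k_3;wz).
\]
Next I would split $W_B=\Z_2^n\rtimes S_n$ and carry out the inner $\Z_2^n$-average using Theorem \ref{BCJack}(iv). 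For $\lambda\in\Lambda_+^n$ with $\ell$ positive entries one has $i_j=j$ and $-\lambda^{**}=(0,\ldots,0,\lambda_\ell,\ldots,\lambda_1)$, which is the antidominant representative of the $S_n$-orbit of $\lambda$. Using $\cosh^2(\sigma z/2)=\sigma\cosh^2(z/2)$ and a change of summation variable absorbing the fixed permutation $\sigma_\lambda^*$, this step produces
\[
\sum_{w\in W_B}E^{\mathrm{BC}}_\lambda(\infty;k_3;wz) = 2^n\cdot 4^{|\lambda|-\ell/2}\sum_{\sigma\in S_n}E^{\mathrm{Jack}}_{-\lambda^{**}}(k_3;\sigma\cosh^2(z/2)).
\]

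The final ingredient is the symmetrization identity
\[
\sum_{\sigma\in S_n}E^{\mathrm{Jack}}_{-\lambda^{**}}(k_3;\sigma y) = \frac{n!}{\#(S_n.\lambda)}P^{\mathrm{Jack}}_\lambda(k_3;y),
\]
which says that the full $S_n$-symmetrization of the antidominant non-symmetric Jack polynomial equals the symmetric Jack polynomial with an explicit combinatorial constant. Plugging this in and using $\#(W_B.\lambda)=2^\ell\#(S_n.\lambda)$ and $\#W_B=2^n n!$, the orbit cardinalities cancel and the remaining prefactor collapses via $2^\ell\cdot 4^{|\lambda|-\ell/2}=4^{|\lambda|}$, producing the claimed limit.

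The main obstacle will be the symmetrization identity above. It is specific to the antidominant weight: for a generic $\mu\in S_n.\lambda$ (e.g.\ dominant $\mu=\lambda$) the analogous symmetrization picks up non-trivial $k_3$-dependent factors from Sahi's recurrence, and only for $\mu=-\lambda^{**}$ do these factors conspire to give a $k_3$-independent constant. I would prove it either by induction on the length of a reduced word from $-\lambda^{**}$ to $\lambda$, iterating Proposition \ref{JackRec}(ii) together with the telescoping identity $(1+s_i)(s_i+c)=(1+c)(1+s_i)$, or more conceptually by observing that both sides are $S_n$-symmetric polynomials with the same leading monomial symmetric function $\tfrac{n!}{\#(S_n.\lambda)}m_\lambda$ and are eigenfunctions of the same symmetric Jack Calogero--Moser operator (hence equal by uniqueness). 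Everything else in the plan---Theorem \ref{RecurrenceLimit}(i), Theorem \ref{BCJack}(iv), and the orbit-cardinality bookkeeping---is routine.
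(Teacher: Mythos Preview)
Your approach is correct but genuinely different from the paper's. The paper does \emph{not} stay unnormalized: it first passes to the renormalized polynomials $\widetilde P_\lambda^{\mathrm{BC}}=P_\lambda^{\mathrm{BC}}/P_\lambda^{\mathrm{BC}}(\kappa;0)$, uses the identity $\widetilde P_\lambda^{\mathrm{BC}}(\kappa;\cdot)=\tfrac{1}{\#W_B}\sum_{w}E_{-\lambda}^{\mathrm{BC}}(\kappa;w\cdot)/E_{-\lambda}^{\mathrm{BC}}(\kappa;0)$ coming from \eqref{HypGeoHOPoly} (so the \emph{antidominant} $\mathrm{BC}$ weight $-\lambda$, not the dominant one), applies Theorem~\ref{BCJack}(i) directly, and then quotes the dominant-weight normalized relation $\widetilde P_\lambda^{\mathrm{Jack}}=\tfrac{1}{n!}\sum_\sigma E_\lambda^{\mathrm{Jack}}(\sigma\cdot)/E_\lambda^{\mathrm{Jack}}(\underline 1)$ from \cite{F10}. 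The unnormalized statement is then recovered by a separate computation of $\lim P_\lambda^{\mathrm{BC}}(\kappa;0)$ via Heckman's $c$-function formula and Stirling-type asymptotics $\Gamma(z+w)/\Gamma(z)\sim z^w$. By contrast, you stay unnormalized throughout: you symmetrize the \emph{dominant} $E_\lambda^{\mathrm{BC}}$ via \eqref{SymNonSym}, invoke the more elaborate Theorem~\ref{BCJack}(iv) for the $\Z_2^n$-average, and close with the antidominant Jack symmetrization identity. The payoff of your route is that the Gamma-function step disappears entirely; the cost is that you must use the heavier part (iv) of Theorem~\ref{BCJack} and supply the antidominant identity, which the paper never needs.

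One remark on your two sketches for that identity. Route (a) by itself only yields the recursion $\mathrm{Sym}\,E_{s_i\mu}=(1+c)\,\mathrm{Sym}\,E_\mu$, hence a \emph{ratio} between the dominant and antidominant constants, not the value $n!/\#(S_n.\lambda)$. Route (b) is the one that pins down the constant, but it relies on the fact that for the antidominant composition $-\lambda^{**}$ the monomial expansion of $E_{-\lambda^{**}}^{\mathrm{Jack}}$ contains no monomial $y^\nu$ with $\nu$ in the same $S_n$-orbit (so that the coefficient of $m_\lambda$ after symmetrization is exactly the stabilizer size). This is true and standard in the non-symmetric Jack theory, but it is precisely the opposite of what happens at the dominant composition (e.g.\ $E_{(1,0)}=y_1+\tfrac{k}{1+k}y_2$), so you should state and justify this triangularity explicitly rather than leave it implicit in ``same leading monomial symmetric function''.
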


\begin{proof}
Consider for $\underline{1}=(1,\ldots,1)\in\R^n$ the renormalizations
$$\widetilde{P}_\lambda^{\mathrm{BC}}(\kappa;\m)\coloneqq\frac{P_\lambda^{\mathrm{BC}}(\kappa;\m)}{P_\lambda^{\mathrm{BC}}(\kappa;0)}, \quad \widetilde{P}_\lambda^{\mathrm{Jack}}(k_3;\m)\coloneqq\frac{P_\lambda^{\mathrm{Jack}}(k_3;\m)}{P_\lambda^{\mathrm{Jack}}(k_3;\underline{1})}.$$
\
\begin{enumerate}[itemsep=5pt, topsep=5pt]
\item[\rm (i)] For the type $\mathrm{BC}$ Cherednik kernel $G^{\mathrm{BC}}_\kappa$ and hypergeometric Function $F^{\mathrm{BC}}_\kappa$ one has for $\lambda \in P^{\mathrm{BC}}_+=\Lambda_+^n$ that $\widetilde{-\lambda}=-\lambda-\rho^{\mathrm{BC}}(\kappa)$ and therefore
$$G^{\mathrm{BC}}_\kappa(-\lambda-\rho^{\mathrm{BC}}(\kappa),\m)=\frac{E_{-\lambda}^{\mathrm{BC}}(\kappa;\m)}{E_{-\lambda}^{\mathrm{BC}}(\kappa,0)}.$$
The longest element in $W_B$ is $w_0=-\mathrm{id}$ and therefore \eqref{HypGeoHOPoly} leads to
\begin{align*}
\frac{P_\lambda^{\mathrm{BC}}(\kappa;\m)}{P_\lambda^{\mathrm{BC}}(\kappa;0)}=\frac{1}{2^nn!}\sum\limits_{w \in W_B} \frac{E_{-\lambda}^{\mathrm{BC}}(\kappa;w\m)}{E_{-\lambda}^{\mathrm{BC}}(\kappa,0)}.
\end{align*}
The Jack polynomials also satisfy by \cite[Formula 12.100]{F10}
$$\frac{P_\lambda^{\mathrm{Jack}}(k_3;\m)}{P_\lambda^{\mathrm{Jack}}(k_3;\underline{1})} = \frac{1}{n!}\sum\limits_{\sigma \in \mathcal{S}_n}\frac{E_\lambda^{\mathrm{Jack}}(k_3;\sigma\m)}{E_\lambda^{\mathrm{Jack}}(k_3;\underline{1})}.$$
Since the map $\R^n \to \R^n,$ $x \mapsto \cosh^2(\tfrac{x}{2})$ is $W_B$-equivariant we obtain from Theorem \ref{BCJack} that locally uniformly
$$\widetilde{P}_\lambda^{\mathrm{BC}}(\kappa;x) = \frac{1}{2^nn!}\sum\limits_{w \in W_B} \frac{E_{-\lambda}^{\mathrm{BC}}(\kappa;wx)}{E_{-\lambda}^{\mathrm{BC}}(\kappa,0)} \xrightarrow[\substack{k_1+k_2 \to \infty \\ k_1/k_2 \to \infty}]{} 4^{\abs{\lambda}} \widetilde{P}_\lambda^{\mathrm{Jack}}(k_3;\cosh^2(\tfrac{x}{2})).$$
\item[\rm(ii)] To prove the limit transition without the renormalization, it suffices to check the limit
$$P_\lambda^{\mathrm{BC}}(\kappa;0) \xrightarrow[\substack{k_1+k_2 \to \infty \\ k_1/k_2 \to \infty}]{} 4^{\abs{\lambda}}P_\lambda^{\mathrm{Jack}}(k_3;\underline{1}).$$
Owing to Heckman, the value $P_\lambda^{\mathrm{BC}}(\kappa;0)$ can be expressed in terms of a generalized Harish-Chandra $c$-function, namely
$$P_\lambda^{\mathrm{BC}}(\kappa;0)=\prod\limits_{\alpha \in \mathrm{BC}_n^+} \frac{\Gamma\big(\braket{\rho^{\mathrm{BC}}(\kappa),\alpha^\vee}+\tfrac{k_{\alpha/2}}{2}\big) \Gamma\big(\braket{\lambda+\rho^{\mathrm{BC}}(\kappa),\alpha^\vee}+\tfrac{k_{\alpha/2}+2k_\alpha}{2}\big)}{\Gamma\big(\braket{\rho^{\mathrm{BC}}(\kappa),\alpha^\vee}+\tfrac{k_{\alpha/2}+2k_\alpha}{2}\big)\Gamma\big(\braket{\lambda+\rho^{\mathrm{BC}}(\kappa),\alpha^\vee}+\tfrac{k_{\alpha/2}}{2}\big)},$$
see for instance \cite{HO87, HS94}. Denote by $d_\alpha(k_1,k_2,k_3)$ the quotient behind the product sign for fixed $\alpha \in \mathrm{BC}_n^+$. The asymptotic equality 
$$\frac{\Gamma(z+w)}{\Gamma(z)} \approx z^w \text{ for } z \to \infty,\, \Re(z)>0$$
immediately leads to
$$d_\alpha(k_1,k_2,k_3) \xrightarrow[\substack{k_1+k_2 \to \infty \\ k_1/k_2 \to \infty}]{} \begin{cases}
4^{\lambda_i},& \quad \alpha=e_i, \\
1,& \quad \alpha=2e_i, \, e_i+e_j, \\
\frac{\Gamma((j-i)k_3)\Gamma(\lambda_i-\lambda_k+k_3+(j-i)k_3)}{\Gamma(\lambda_i-\lambda_k+(j-i)k_3)\Gamma(k_3+(j-i)k_3)},& \quad \alpha=e_i-e_j.
\end{cases}$$
Therefore, together with the Pochhammer symbol $(z)_\alpha = \tfrac{\Gamma(z+\alpha)}{\Gamma(z)}$ we have proven that
\begin{align*}
P^{\mathrm{BC}}_\lambda(\kappa,0) \xrightarrow[\substack{k_1+k_2 \to \infty \\ k_1/k_2 \to \infty}]{} & 4^{\abs{\lambda}}\prod\limits_{i<j} \frac{\Gamma((j-i)k_3)\Gamma(\lambda_i-\lambda_j+k_3+(j-i)k_3)}{\Gamma(\lambda_i-\lambda_k+(j-i)k_3)\Gamma(k_3+(j-i)k_3)} \\
&= 4^{\abs{\lambda}} \prod\limits_{i<j} (\lambda_i-\lambda_j+(j-i)k_3)_{k_3} \m \prod\limits_{1\le j\le n} \frac{\Gamma(k_3)}{\Gamma(jk_3)} \\
&=4^{\abs{\lambda}}P_\lambda^{\mathrm{Jack}}(k_3;\underline{1}),
\end{align*}
where the last equality can be found for instance in \cite[Formula (6.4)]{OO98}
\end{enumerate}
\end{proof}

\section{Limit transition of the Cherednik kernels}
Let $G_\kappa^{\mathrm{BC}}$ be the Cherednik kernel associated with $(\mathrm{BC}_n^+,\kappa)$ with $\kappa=(k_1,k_2,k_3)\ge 0$ as before. Furthermore, consider the type $\mathrm{A}$ root system inside $\R^n$ with positive roots
\begin{align*}
\mathrm{A}_{n-1}&=\set{\pm (e_i-e_j)\mid 1\le i<j\le n} \subseteq \R^n, \\
\mathrm{A}_{n-1}^+&= \set{e_i-e_j\mid 1\le i<j\le n}.
\end{align*}
Let $G_{k_3}^{\mathrm{A}}$ be the Cherednik kernel associated with $(\mathrm{A}_{n-1}^+,k_3)$ on $\R^n$ as introduced in Section \ref{IntegralRoots} and consider rational version on $G_{k_3}^{\mathrm{A}}$ namely
$$\mathcal{G}_{k_3}^{\mathrm{A}}(\lambda,x)=G_{k_3}^{\text{A}}(\lambda,\log x), \text{ for all } x\in (0,\infty)^n,$$
where $\log$ is the inverse of $\exp:\R^n \to \R_+^n$. This rational version of the Cherednik kernel was already studied in \cite{BR23} and is a holomorphic extension of the Jack polynomial $(E_\lambda^{\mathrm{Jack}}(k;\m))_{\lambda \in \N_0^n}$ in the variable $\lambda$. Indeed, for $\lambda \in \N_0^n$ we have by \cite[Theorem 5.1, Formula (3.2)]{BR23} that
$$\mathcal{G}_{k_3}^{\mathrm{A}}(\overline{\lambda}+ \tfrac{k_3}{2}(n-1,\ldots,n-1), x)=\frac{E_\lambda^{\mathrm{Jack}}(k_3,x)}{E_\lambda^{\mathrm{Jack}}(k_3;1,\ldots,1)}.$$
with $\overline{\lambda}=(\overline{\lambda}_1,\ldots,\overline{\lambda}_n)$ from Proposition \ref{JackRec}: If in addition $\lambda \in \Lambda_+^n$ holds, then 
$$\mathcal{G}_{k_3}^{\mathrm{A}}(\lambda-\rho^{\mathrm{A}}(k_3), x)=\frac{E_\lambda^{\mathrm{Jack}}(k_3;x)}{E_\lambda^{\mathrm{Jack}}(k_3;1,\ldots,1)}.$$
We are know able to prove the subsequent theorem, which generalizes the limit transition between the type $\mathrm{BC}$ hypergeometric function and the type $A$ hypergeometric function from \cite[Theorem 5.1]{RKV13}. 

\begin{theorem}\label{CherednikBCA}
For all $x \in \R^n$ and $\lambda \in \C^n$ 
\begin{align*}
G_{\kappa}^{\mathrm{BC}}(-\lambda-\rho^{\mathrm{BC}}(\kappa),x) \xrightarrow[\substack{k_1+k_2 \to \infty \\ k_1/k_2 \to \infty}]{} &\mathcal{G}_{k_3}^{\mathrm{A}}(\lambda-\rho^{A}(k_3),\cosh^2(\tfrac{x}{2})), \\
&=G^{\mathrm{A}}_{k_3}(\lambda-\rho^{\mathrm{A}}(k_3), \log(\cosh^2(\tfrac{x}{2}))), \\
&=\prod\limits_{1\le i \le n} (\cosh^2(\tfrac{x_i}{2}))^{\tfrac{\braket{\lambda,\underline{1}}}{2}} \m G^{\mathrm{A}}_{k_3}(\lambda-\rho^{\mathrm{A}}(k_3), \pi(\log(\cosh^2(\tfrac{x}{2})))),
\end{align*}
locally uniformly in $\lambda$, where $\pi:\R^n \to \R_0^n=\mathrm{span}_\R \mathrm{A}_{n-1}$ is the orthogonal projection.
\end{theorem}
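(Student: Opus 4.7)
The plan is to follow the two-step strategy of \cite[Theorem 5.1]{RKV13}: first establish the convergence at dominant weight spectral parameters using the non-symmetric polynomial limits of Section 5, then extend to arbitrary $\lambda \in \C^n$ via Montel's theorem and Carlson's theorem. The second step is essentially a verbatim transcription of the argument in \cite{RKV13}, as the introduction already flags; the genuinely new ingredient is the first step, carried by Theorems \ref{RecurrenceLimit}--\ref{BCJack}.

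For the first step, fix $\lambda \in \Lambda_+^n$. Since $\braket{\alpha,-\lambda} \le 0$ for every $\alpha \in \mathrm{BC}_n^+$, the explicit formula for $\widetilde{\mu}$ in Definition \ref{DefHOPol} yields $\widetilde{-\lambda} = -\lambda - \rho^{\mathrm{BC}}(\kappa)$, so
\begin{equation*}
G_{\kappa}^{\mathrm{BC}}(-\lambda-\rho^{\mathrm{BC}}(\kappa),x) \;=\; \frac{E_{-\lambda}^{\mathrm{BC}}(\kappa;x)}{E_{-\lambda}^{\mathrm{BC}}(\kappa;0)}.
\end{equation*}
Theorem \ref{RecurrenceLimit}(i) gives locally uniform convergence of numerator and denominator as $\kappa$ tends to infinity in the prescribed regime, and Theorem \ref{BCJack}(i) identifies $E_{-\lambda}^{\mathrm{BC}}(\infty;k_3;x)=4^{|\lambda|}E_\lambda^{\mathrm{Jack}}(k_3;\cosh^2(x/2))$. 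The factor $4^{|\lambda|}$ cancels in the quotient, and the identification of $\mathcal{G}_{k_3}^{\mathrm{A}}$ with the normalized Jack polynomial recalled just before the theorem yields precisely $\mathcal{G}_{k_3}^{\mathrm{A}}(\lambda-\rho^{\mathrm{A}}(k_3),\cosh^2(x/2))$. This proves the limit on $\Lambda_+^n$.

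For the second step, fix $x \in \R^n$ and treat $f_\kappa(\lambda) := G_\kappa^{\mathrm{BC}}(-\lambda-\rho^{\mathrm{BC}}(\kappa),x)$ as a family of entire functions in $\lambda$ (Theorem \ref{NonSpanningCherednik}). Proposition \ref{GrowthCherednik}(ii) bounds $|f_\kappa(\lambda)|$ by $G_\kappa^{\mathrm{BC}}(-\Re\lambda-\rho^{\mathrm{BC}}(\kappa),x)$ times a $\kappa$-independent exponential in $\Im\lambda$, and since $-\mathrm{id}$ is the longest element of $W_B$ the vector $-\rho^{\mathrm{BC}}(\kappa)$ lies in $W_B.\rho^{\mathrm{BC}}(\kappa) \subseteq C(\rho^{\mathrm{BC}}(\kappa))$. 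Theorem \ref{HJ} combined with Lemma \ref{HFprep}(i) then controls the right-hand side uniformly for $(\lambda,x)$ in compacta and uniformly in $\kappa$. By Montel's theorem the family $\{f_\kappa\}$ is normal, and any subsequential locally uniform limit is an entire function of $\lambda$ which, by the first step, agrees with the target entire function $\lambda \mapsto \mathcal{G}_{k_3}^{\mathrm{A}}(\lambda-\rho^{\mathrm{A}}(k_3),\cosh^2(x/2))$ on $\Lambda_+^n$. Iterative application of Carlson's theorem in the $n$ coordinate directions, with the exponential-type estimate in $\lambda$ provided by Proposition \ref{GrowthCherednik}, then forces the difference to vanish on all of $\C^n$. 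Since every subsequential limit equals the target, the full family $f_\kappa$ converges locally uniformly in $\lambda$. The remaining two alternative expressions for the right-hand side are immediate: the first is the definition of $\mathcal{G}^{\mathrm{A}}_{k_3}$, and the second follows from Theorem \ref{NonSpanningCherednik} applied to the integral root system $\mathrm{A}_{n-1} \subseteq \R^n$ along the orthogonal decomposition $\R^n = \R_0^n \oplus \R\underline{1}$, using $\rho^{\mathrm{A}}(k_3) \in \R_0^n$.

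The principal obstacle is the uniform-in-$\kappa$ boundedness needed by Montel, since $-\rho^{\mathrm{BC}}(\kappa)$ escapes to infinity in the prescribed regime and the Helgason--Johnson constant $C_\kappa$ of Theorem \ref{HJ} might a priori blow up. What saves the argument is that $-\rho^{\mathrm{BC}}(\kappa)$ always remains a Weyl vertex of $C(\rho^{\mathrm{BC}}(\kappa))$, so the reflection chain in the proof of Lemma \ref{HFprep}(ii) has length at most $\#W_B$ with each factor $1 + k_j/\braket{\rho^{\mathrm{BC}}(\kappa)_{(j)},\alpha_j^\vee}$ remaining bounded, because in the BC case the numerator $k_j$ and the denominator both scale linearly in $(k_1,k_2,k_3)$. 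Once this bounded-chain estimate is in place, the Montel--Carlson machinery closes the proof exactly as in \cite{RKV13}.
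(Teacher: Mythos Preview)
Your two-step strategy is precisely the paper's: the proof section there says verbatim that one transcribes \cite[Theorem~5.1]{RKV13} with the non-symmetric polynomials of Section~5 in place of the symmetric ones, and your first step correctly invokes $\widetilde{-\lambda}=-\lambda-\rho^{\mathrm{BC}}(\kappa)$ together with Theorems~\ref{RecurrenceLimit} and~\ref{BCJack}(i) to handle $\lambda\in\Lambda_+^n$. The attribution of the final two equalities to Theorem~\ref{NonSpanningCherednik} is also exactly what the paper does.

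Where you diverge from the intended argument is in the uniform-in-$\kappa$ bound. You route this through Theorem~\ref{HJ} and then worry that $C_\kappa=c_{-\rho^{\mathrm{BC}}(\kappa)}$ might blow up, resolving it by a chain-factor analysis. That analysis is in fact correct (each factor $1+k_j/\braket{\rho^{\mathrm{BC}}(\kappa),\alpha_j^\vee}$ equals $2$, since $\braket{\rho^{\mathrm{BC}}(\kappa),\alpha_i^\vee}=k_3$ for $i<n$ and $\braket{\rho^{\mathrm{BC}}(\kappa),\alpha_n^\vee}=k_1+2k_2$), but the whole detour is unnecessary. In the splitting of Proposition~\ref{GrowthCherednik}(ii), take $\mu=-\rho^{\mathrm{BC}}(\kappa)$ and the remainder $-\lambda$; then
\[
\abs{G_\kappa^{\mathrm{BC}}(-\lambda-\rho^{\mathrm{BC}}(\kappa),x)}\;\le\; G_\kappa^{\mathrm{BC}}(-\rho^{\mathrm{BC}}(\kappa),x)\cdot e^{\max_{w\in W_B}\braket{-\Re\lambda,\,wx}} \;=\; e^{\max_{w\in W_B}\braket{-\Re\lambda,\,wx}},
\]
because $G_k(-\rho(k),\cdot)\equiv 1$ (this is the identity already used in the proof of Theorem~\ref{HJ}). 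This bound is manifestly uniform in $\kappa$ and of exponential type in $\lambda$, so both Montel and the iterated Carlson argument go through immediately, exactly as in \cite{RKV13}. Your version works, but it obscures the fact that no Helgason--Johnson machinery is needed here at all.
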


\begin{proof}
The proof is exactly the same as in \cite[Theorem 5.1]{RKV13} and uses an analytic continuation argument. One just have to replace in \cite[Theorem 5.1]{RKV13} the hypergeometric functions by the Cherednik kernel and the associated symmetric polynomials by their non-symmetric analogs.  \\
The last two equality in the limit formula in Theorem \ref{CherednikBCA} are a consequence from Theorem \ref{ProdDecomp}.
\end{proof}

\begin{remark}
As a corollary we obtain the results from \cite[Theorem 5.1]{RKV13} for the hypergeometric function. 
Let $F^{\text{A}}_{k_3}$ be the type $\mathrm{A}$ hypergeometric function on $\R^n$ associated with $(\mathrm{A}_{n-1}^+,k_{3})$ and $\mathcal{F}_{k_3}^{\mathrm{A}}$ the rational hypergeometric function on $(0,\infty)^n$, i.e.
$$\mathcal{F}_k^{\mathrm{A}}(\lambda,x)=F_k^{\mathrm{A}}(\lambda,\log x) = \frac{1}{n!}\sum\limits_{\sigma \in \mathcal{S}_n} \mathcal{G}_k(\lambda,\sigma x).$$ Averaging in Theorem \ref{CherednikBCA} gives 
\begin{align*}
F^{\mathrm{BC}}_\kappa(\lambda,x) \xrightarrow[\substack{k_1+k_2 \to \infty \\ k_1/k_2 \to \infty}]{} & \mathcal{F}_{k_3}^{\mathrm{A}}(\lambda-\rho^{\mathrm{A}}(k_3),\cosh^2(\tfrac{x}{2})) \\
&= F^{\mathrm{A}}_{k_3}(\lambda-\rho^{\mathrm{A}}(k_3), \log(\cosh^2(\tfrac{x}{2}))), \\
&=\prod\limits_{1\le i \le n} (\cosh^2(\tfrac{x_i}{2}))^{\tfrac{\braket{\lambda,\underline{1}}}{2}} \m F^{\mathrm{A}}_{k_3}(\pi(\lambda)-\rho^{\mathrm{A}}(k_3), \pi(\log(\cosh^2(\tfrac{x}{2})))),
\end{align*}
where the last two equality holds by Theorem \ref{ProdDecomp}. \\
The different sign in the $\rho$-shift, compared to \cite[Theorem 5.1]{RKV13} comes from the different choice of positive subsystems in $\mathrm{A}_{n-1}$, i.e. $-\rho^{\mathrm{A}}(k_3)$ here is precisely the term $+\rho^{\mathrm{A}}(k_3)$ in \cite{RKV13}. 
\end{remark}

\begin{remark}
A limit transition for the $\mathrm{B}_n$ root system can be easily obtained from the type $\mathrm{BC}_n$ case. Choose for the root system $\mathrm{B}_n=\mathrm{BC}_n\setminus \set{2e_i\mid i=1,\ldots,n}\subseteq \R^n$ with positive subsystem
$$\mathrm{B}_n^+=\mathrm{BC}_n^+\cap \mathrm{B}_n = \set{e_i\mid 1\le i \le n} \cup \set{e_i\pm e_j \mid 1\le i< j\le n}.$$
Then, both root system have the same Weyl group, simple roots, weight lattice and dominant weights. Denote by $\kappa'=(k_1,k_3)\ge 0$ a multiplicity function on ${\mathrm{B}}_n$, where $k_1$ is the value on $e_i$ and $k_3$ is the value on $e_i\pm e_j$. Then the Cherednik operators $D_\xi^{\mathrm{B}}(\kappa')$ and $D_\xi^{\mathrm{BC}}(\kappa)$ for the given positive subsystems, $\xi \in \R^n$, are related by
$$D_\xi^{\mathrm{B}}(k_1,k_3)=D_\xi^{\mathrm{BC}}(k_1,0,k_3),$$
and also $\rho^{\mathrm{B}}(k_1,k_3)=\rho^{\mathrm{BC}}(k_1,0,k_3)$ .
Hence, the non-symmetric and symmetric Heckman-Opdam polynomials, as well as the Cherednik kernels and hypergeometric functions are related by
\begin{align*}
E^{\text{B}}_\lambda(k_1,k_3;z)&=E^{\mathrm{BC}}_\lambda(k_1,0,k_3;z), \quad \lambda \in \Z^n, \, k_1,k_3\ge 0, \, z\in \C^n; \\
P^{\text{B}}_\lambda(k_1,k_3;z)&=P^{\mathrm{BC}}_\lambda(k_1,0,k_3;z), \quad \lambda \in \Lambda_+^n, \, k_1,k_3\ge 0, \, z\in \C^n; \\
G^{\text{B}}_{(k_1,k_3)}(\lambda;x)&=G^{\mathrm{BC}}_{(k_1,0,k_3)}(\lambda,x), \quad \lambda \in \C^n, \, k_1,k_3\ge 0, \, x \in \R^n; \\
F^{\text{B}}_{(k_1,k_3)}(\lambda;x)&=F^{\mathrm{BC}}_{(k_1,0,k_3)}(\lambda,x), \quad \lambda \in \C^n, \, k_1,k_3\ge 0, \, x \in \R^n.
\end{align*}
Hence, all proven limit transitions are also correct for the root system $\mathrm{B}_n$ instead of $\mathrm{BC}_n$ in the upper sense, one has only to consider $k_2=0$.
\end{remark}

\section*{Acknowledgment}


\begin{thebibliography}{9999}
\bibitem[BR23]{BR23} D. Brennecken and M. R\"{o}sler, The Dunkl-Laplace transform and Macdonald's hypergeometric series. \emph{Trans. Amer. Math. Soc.} 376 (2023), 2419--2447.
\bibitem[dJ06]{dJ06} M.F.E. de Jeu, Paley-Wiener Theorems for the Dunkl Transform. \emph{Trans. Amer. Math. Soc.} 358 (2006), 4225--5250.
\bibitem[Dun89]{D89} C.F. Dunkl, Differential-difference operators associated to finite reflection groups. \emph{Trans. Amer. Math. Soc.} 311 (1989), 167--183.
\bibitem[For10]{F10} P.J. Forrester, Log-Gases and Random Matrices. \emph{London Math. Soc. Monogr. Ser., 34 Princeton University Press, Princeton} (2010).
\bibitem[GV88]{GV88} R. Gangolli and V.S. Varadarajan, Harmonic Analysis of Spherical Functions on Real Reductive groups. \emph{Ergeb. Math. Grenzgeb. 101, Springer Verlag, Berlin} (1988).
\bibitem[HO87]{HO87} G. Heckman and E.M. Opdam, Root systems and Hypergeometric functions I. \emph{Compos. Math.} 64 (1987), 329--352.
\bibitem[Hec87]{H87} G. Heckman, Root systems and Hypergeometric functions II. \emph{Compos. Math.} 64 (1987), 353--379.
\bibitem[Hec97]{H97} G. Heckman, Dunkl Operators. Seminaire N. Bourbaki 1996/1997. \emph{Asterique 244, Exp. no. 828} (1997), 223--246.
\bibitem[HS94]{HS94} G. Heckman and H. Schlichtkrull, Harmonic analysis and special functions on symmetric spaces. \emph{Prespect. Math. 16, Academic Press, Inc., San Diego, CA} (1994).
\bibitem[HJ69]{HJ69} S. Helgason and K. Johnson, The bounded spherical functions on symmetric spaces. \emph{Adv. Math.} 3 (1969), 586--593.
\bibitem[Hel84]{H84} S. Helgason, Groups and Geometric Analysis: Integral Geometry, Invariant Differential Operators, Spherical functions. \emph{Pure Appl. Math. 113, Academic Press} (1984).
\bibitem[Hum90]{H90} J.E. Humphreys, Relfection Groups and Coxeter Groups. \emph{Cambridge Stud. Adv. 29, Cambridge Press, Cambridge} (1990).
\bibitem[KS97]{KS97} F. Knop and S. Sahi, A recursion and a combinatorical formula for Jack polynomials. \emph{Invent. Math.} 128 (1997), 9--22.
\bibitem[KO08]{KO08} B. Kr\"{o}tz and E.M. Opdam, Analysis on the crown domain. \emph{Geom. Funct. Anal.} 18 (2008), 1326--1421.
\bibitem[OO98]{OO98} A. Okounkov and G. Olshanski, Asymptotics of Jack polynomials as the number of variables goes to infinity. \emph{Internat. Math. Res. Notices} 13 (1998), 641--682.
\bibitem[Opd95]{O95} E.M. Opdam, Harmonic analysis for certain representations of graded Hecke algebras. \emph{Acta Math.} 175 (1995), 75--121.
\bibitem[Opd00]{O00} E.M. Opdam, Lecturs notes on Dunkl Operators for Real and Complex Reflection groups. With a preface by Toshio Oshima. \emph{MSJ Memoirs 8, Mathematical Society of Japan} 2000.
\bibitem[NPP14]{NPP14} E.K. Narayanan, A. Pasquale, and S. Pusti, Asymptotics of Harish-Chandra expansions, bounded hypergeometric functions associated with root systems, and applications. \emph{Adv. Math.} 252 (2014), 227--259.
\bibitem[R\"{o}s03]{R03} M. R\"{o}sler, Dunkl operators: Theory and Applications. \emph{Lecture notes in Math.} 1817 (2003), 93--136.
\bibitem[RKV13]{RKV13} M. R\"{o}sler, T. Koornwinder, and M. Voit, Limit transition between hypergeometric functions of type $BC$ and type $A$. \emph{Compos. Math.} 149 (2013), 1381--1400.
\bibitem[Sah00a]{S00a} S. Sahi, A new formula for weight multiplicities and characters. \emph{Duke Math J.} 101 (2000), 77--85.
\bibitem[Sah00b]{S00b} S. Sahi, Some properties of Koornwinder polynomials. \emph{Contemp. Math.} 254 (2000), 395--411.
\bibitem[Sch08]{S08} B. Schapira, Contributions to the hypergeometric function theory of Heckman and Opdam: sharp estimates, Schwartz space, heat kernel. \emph{Geom. Funct. Anal.} 18 (2008), 222--250.
\end{thebibliography}
\end{document}